\definecolor{ttttff}{rgb}{0.2,0.2,1.}
\definecolor{ttffcc}{rgb}{0.2,1.,0.8}
\definecolor{qqqqff}{rgb}{0.,0.,1.}
\definecolor{zzttqq}{rgb}{0.6,0.2,0.}
\definecolor{qqqqff}{rgb}{0.,0.,1.}
\newtheorem{theorem}{Theorem}[section]
\newtheorem{lemma}[theorem]{Lemma}
\newtheorem{corollary}[theorem]{Corollary}
\newtheorem{proposition}[theorem]{Proposition}
\theoremstyle{definition} \newtheorem{remark}[theorem]{Remark}
\newtheorem{example}[theorem]{Example}
\newcommand{\scr}[1]{\mathscr #1} \numberwithin{equation}{section}
\newcommand\E{\mathbb{E}} \newcommand\R{\mathbb{R}}
 \newcommand\sP{\mathscr P} \newcommand\sC{\mathscr C}
\def\tr{{\operatorname{tr}}\kern 0.08em} 
\def\Ent{{\operatorname{Ent}}} \def\Ric{{\operatorname{Ric}}}
\def\Hess{{\operatorname{Hess}}} \newcommand{\ptr}{/\!/}
\newcommand\1{\hbox{\kern.375em\vrule
    height1.57ex depth-.1ex width.05em\kern-.375em \rm 1}}
\def\vol{\mathpal{vol}} 
\newcommand\HS{{\text{\rm\tiny HS}}}
\def\mathpal#1{\mathop{\mathchoice{\text{\rm #1}}%
    {\text{\rm #1}}{\text{\rm #1}}%
    {\text{\rm #1}}}\nolimits}
\def\vd{\mathrm{d}} \def\r{\right} \def\l{\left}
\def\e{\operatorname{e}} 
\def\mathpal#1{\mathop{\mathchoice{\text{\rm #1}}%
    {\text{\rm #1}}{\text{\rm #1}}%
    {\text{\rm #1}}}\nolimits} \def\id{{\mathpal{id}}}
\def\ff{\frac}  \def\nn{\nabla} 
\def\<{\langle} \def\>{\rangle}  
 \def\1{\mathds{1}}
    \def\rr{ \right|\right|}
\def\ff{\frac} \def\ss{\sqrt}  
 \def\DD{\Delta}  \def\rr{\rho}
\def\<{\langle} \def\>{\rangle} \def\GG{\Gamma} \def\gg{\gamma}
\def\nn{\nabla}  \def\E{\mathbb E}
\def\d{\text{\rm{d}}} \def\bb{\beta}  
  \def\beg{\begin}
  \def\beq{\begin{equation}} 
    \def\e{\text{\rm{e}}}
     \def\C{\scr C}
    \def\to{\rightarrow} \def\iint{\int}
    \def\W{\mathbb W}
    \def\S{\mathbb S}
    \def\Ent{{\rm Ent}} 
      \def\l{\left}\def\r{\right}
    \newcommand{\eps}{\varepsilon}
    \newcommand\mequal{\overset{\text{\scriptsize\rm m}}{=}}
\begin{document}

    \arraycolsep=1pt

    \title{\bf\Large Some inequalities on
      Riemannian manifolds linking Entropy,\\ Fisher
      information, Stein discrepancy and Wasserstein distance
      \footnotetext{\hspace{-0.35cm} 2010 {\it
          Mathematics Subject Classification}. Primary: 60E15;
        Secondary: 35K08, 46E35, 42B35.\endgraf {\it Key words and
          phrases}. Relative entropy, Fisher information, Stein
        discrepancy, Wasserstein distance.
      }}

    \author[1]{Li-Juan Cheng} \author[2]{Anton Thalmaier}
    \author[3,4]{Feng-Yu Wang}
    \setlength{\affilsep}{3em}\renewcommand*{\Authsep}{,
    }\renewcommand*{\Authand}{, }\renewcommand*{\Authands}{, }

    \affil[1]{\small School of  Mathematics, Hangzhou Normal
      University,\par
      Hangzhou 311121, People's Republic of China\par
      \texttt{lijuan.cheng@hznu.edu.cn}\vspace{1em}}

    \affil[2]{\small Department of Mathematics, University of
      Luxembourg, Maison du Nombre,\par
      L-4364 Esch-sur-Alzette, Luxembourg\par
      \texttt{anton.thalmaier@uni.lu}\vspace{1em}}

    \affil[3]{\small Center for Applied Mathematics, Tianjin
      University,\par Tianjin 300072, People's Republic of China}
    \affil[4]{\small Department of Mathematics, Swansea University,
      Bay Campus,\par
      Swansea SA1 8EN, United Kingdom\par
      \texttt{wangfy@tju.edu.cn}}

    \date{\today}
    \maketitle

    \begin{abstract}\noindent
      For a complete connected Riemannian manifold $M$ let
      $V\in C^2(M)$ be such that $\mu(\vd x)=\e^{-V(x)} \vol(\vd x)$
      is a probability measure on $M$.  Taking $\mu$ as reference
      measure, we derive inequalities for probability measures on~$M$
      linking relative entropy, Fisher information, Stein discrepancy
      and Wasserstein distance. These inequalities strengthen in
      particular the famous log-Sobolev and transportation-cost
      inequality and extend the so-called
      Entropy/Stein-discrepancy/Information (HSI) inequality 
      established by Ledoux, Nourdin and Peccati (2015) for the standard
      Gaussian measure on Euclidean space to the setting of Riemannian
      manifolds.
    \end{abstract}

    \section{Introduction}

    Let $ \gamma(\vd x)=(2\pi)^{-n/2}\e^{-|x|^2/2}\,\vd x$ be the
    standard Gaussian measure on $\R^n$ and denote by $\sP(\R^n)$ the
    set of probability measures on $\R^n$. The classical log-Sobolev
    inequality \cite{Gross} indicates that \begin{equation} \label{LS}
      H(\nu\,|\,\gamma)\le \frac12 I(\nu\,|\,\gamma),\ \ \nu\in
      \sP(\R^n),
    \end{equation} and the transportation-cost inequality \cite{Talagrand} states that
    \begin{equation}\label{TSP} \W_2(\nu,\gamma)^2\le 2
      H(\nu\,|\,\gamma),\quad
      \nu\in \scr P(\R^n),\end{equation}
where for $\nu,\mu\in \scr P(\R^n)$ we consider
    \begin{enumerate}[1.]
    \item the relative entropy of $\nu$ with respect to $\mu$,\beq\label{D1} H(\nu\,|\,\mu):=\beg{cases} \displaystyle\int_{\R^n}h\log h\,\d\mu, &\text{if}\ \nu(\d x)= h(x)\mu(\d x),\\
      \infty, \ &\text{otherwise,}\end{cases} \end{equation}
\item the Fisher information of $\nu$ with respect to $\mu$
\begin{equation}\label{D2}
  I(\nu\,|\,\mu):=\beg{cases} \displaystyle\int_{\R^n}\frac{|\nabla h|^2}{h}\, \vd \mu,\ &\text{if}\ \nu(\d x)= h(x)\mu(\d x), \ss h\in W^{1,2}(\mu),\\
  \infty,\ &\text{otherwise,}\end{cases}
\end{equation}
\item the $L^2$-Wasserstein distance $\W_2$ of $\mu$ and
$\nu$, i.e.  \beq\label{D3} \W_2(\mu,\nu):=\inf_{\pi\in \C(\mu,\nu)}
\l(\int_{\R^n\times\R^n} |x-y|^2\,\pi(\d x,\d
y)\r)^{1/2} \end{equation} with
$\C(\mu,\nu)$ being the set of all couplings of $\mu$ and $\nu$.
\end{enumerate}

Inspired by \cite{NPS14}, Ledoux, Nourdin and Peccati \cite{LNP15}
established some new type of inequalities improving \eqref{LS} and
\eqref{TSP} by adopting the Stein discrepancy
$S(\nu\,|\,\gamma)$ of $\nu$ with respect to~$\gg$ as further ingredient.
This quantity is defined as
\begin{equation}\label{D4}
  S(\nu\,|\,\gamma):=\inf_{\tau\in \S_\nu}\l(\int_{\R^n}|\tau-\id|_\HS^2\, \vd\nu\r)^{1/2}
\end{equation}
where $\id$ is the $n\times n$-identity matrix and
$\S_\nu$ the set of measurable maps
$\tau\in L_{\rm loc}^1( \R^n\to \R^n\otimes\R^n;\nu)$ such that
\begin{align*}
  \int_{\R^n} x\cdot \nabla \varphi\, \vd \nu=\int_{\R^n}\langle \tau, \Hess_{\varphi}\rangle_\HS\, \vd\nu,\quad \varphi\in C_0^\infty(\R^n).
\end{align*}
A map $\tau\in \S_\nu$ is called a Stein kernel of $\nu$. In general, the set
$\S_\nu$ may contain infinitely many maps; for instance, for the
Gaussian measure $\gg$,
$$\left\{x\mapsto \big(1+r \e^{|x|^2/2}\big)\, \id\colon r\in\R\right\}\subset\S_\gg.$$
Recall that however the Gaussian measure
$\gamma$ is characterized as the only probability distribution on
$\R^n$ satisfying
$$
\int_{\R^n} x\cdot \nabla \varphi\, \vd \gamma=\int_{\R^n}\Delta
\varphi\, \vd\gamma,\quad \varphi\in C_0^\infty(\R^n).
$$
Hence for $\nu\in \scr P(\R^n)$ it holds that $\id\in
\S_\nu$ if and only if $\nu=\gg$.

This equivalence indicates that the Stein discrepancy $S(\nu\,|\,\gamma)$
with respect to the Gaussian distribution $\gamma$ provides a natural
measure for the proximity of $\nu$ to $\gg$ and allows to quantify how
far $\nu$ is away from $\gg$.  It is a crucial quantity for normal
approximations and appears implicitly in many works on Stein's method
\cite{Stein}.  The Stein method was initially developed to quantify
the rate of convergence in the Central Limit Theorem
\cite{SteinMethod}, and has recently been extended to probability
distributions on Riemannian manifolds \cite{Th2020}.  For Gamma
approximations the Stein discrepancy represents the bound one
customarily obtains when applying Stein's method to measure the distance
to the one-dimensional Gamma distribution, see
\cite{Ch-G-Sh11,DP18,LNP15,N-P12}.

Recall that the relative entropy $H(\nu\,|\,\gamma)$ is another measure of the
proximity between $\nu$ and $\gamma$ (note that $H(\nu\,|\,\gamma)\geq0$ and
$H(\nu\,|\,\gamma)=0$  if and only if $\nu=\gamma$) which is moreover stronger
than the total variation distance, $2$TV$(\nu,\gamma)^2\leq H(\nu\,|\,\gamma)$,
see \cite{Villani, LNP15}.

Considering the Stein discrepancy
$S(\nu\,|\,\gamma)$ as a new ingredient,
according to \cite[Theorem 2.2]{LNP15}, one has the
following HSI inequality which strengthens \eqref{LS}:
\beq\label{HSI} H(\nu\,|\,\gamma)\leq
\frac12 S^2(\nu\,|\,\gamma)\log
\bigg(1+\frac{I(\nu\,|\,\gamma)}{S^2(\nu\,|\,\gamma)}\bigg),\quad \nu\in
\scr P(\R^n),
\end{equation}
whereas the inequality \cite[Theorem 3.2]{LNP15},
\begin{equation}\label{W2}
  \W_2(\nu\,|\,\gg)\leq  S(\nu\,|\,\gg) \arccos\left(\exp\l(-\frac{H(\nu\,|\,\gg)}{S^2(\nu\,|\,\gg)}\r)\right),\quad \nu\in \scr P(\R^n),
 \end{equation} improves the transportation-cost inequality \eqref{TSP}.
Moreover, \cite[Theorem 2.8]{LNP15} gives the existence of a constant $C>0$
such that
\begin{equation}\label{FP}
  \l(\int |f|^p\, \vd \nu\r)^{1/p}\leq C\left(S_p(\nu\,|\,\gg)+\sqrt{p}\,\Big(\int |\tau|_{\rm op}^{p/2}\vd \nu\Big)^{1/p}\right),\quad \nu(f)=0,\ |\nn f|\le 1,\ \tau\in \S_\nu,
\end{equation}
where for $p\geq1$, one defines
\begin{equation}\label{D4New}
  S_p(\nu\,|\,\gamma):=\inf_{\tau\in \S_\nu}\l(\int_{\R^n}|\tau-\id|_\HS^p\,\vd\nu\r)^{1/p}.
\end{equation}
In particular $S_2(\nu\,|\,\gamma)$ is the Stein discrepancy as defined above.

In \cite{LNP15} these inequalities have been extended to probability measures $\mu(\d x):=\e^{V(x)}\d x$ on $\R^n$ which are stationary distributions of an elliptic symmetric diffusion process on $\R^n$. The required
assumptions are formulated in terms of conditions on the iterated Bakry-\'Emery operators $\GG_i$ 
$(i=1,2,3)$. It is worth mentioning that the analysis towards the HSI bound in this context
makes crucial use of the
iterated gradient $\GG_3$ which is rather uncommon in the study of functional inequalities.

The aim of this paper is to put forward this framework  and to investigate inequalities
of the type \eqref{HSI}, \eqref{W2} and \eqref{FP} on general Riemannian manifolds.
It should be stressed that in our approach explicit Hessian estimates of the heat semigroup take over the
role of bounds on $\GG_3$.
Our results on Riemannian manifolds include the above inequalities as special cases.

We start with some basic notations.
Let $M$ be a complete connected Riemannian manifold equipped with a probability measure
$$\mu(\vd x)=\e^{-V(x)}{\vol}(\vd x)$$ for some $V\in C^2(M)$,
where $\vol(\vd x)$ denotes the Riemannian volume measure.  As well
known, the diffusion semigroup $P_t=\e^{\frac12 tL}$ generated by
$L:=\DD+\nn V$ is symmetric on $L^2(\mu)$. We denote by $\Ric_V:=\Ric+\Hess_V$ the
Bakry-\'Emery curvature tensor.

Let
$H(\nu\,|\,\mu)$, $I(\nu\,|\,\mu)$, $\W_2(\nu,\mu)$ and $S(\nu\,|\,\mu)$
for $\nu\in \scr P(M)$ be defined as in \eqref{D1}, \eqref{D2},
\eqref{D3} and \eqref{D4} respectively, with $(M,\mu)$ replacing
$(\R^n,\gg)$, the Riemannian distance $\rr(x,y)$ replacing $|x-y|$,
and $\S_\nu$ being the class of measurable 2-tensors $\tau$ which are
locally integrable with respect to $\nu$ such that
$$\int_M \<\nn V, \nn f\>\,\d\nu = \int_M \<\tau, \Hess_f\>_\HS \,\d\nu,\quad f\in C_0^\infty(M).$$
Assume $\S_{\nu}$ is non-empty, that is a Stein kernel for $\nu$  exists.
In the Euclidean case $M=\R^n$, this is ensured by the existence of a spectral gap (see \cite{CFP19}).  Existence of a Stein kernel on a general Riemannian manifold is currently work under development and will be published elsewhere.

Our results on Riemannian manifolds are presented in the Sections 3, 4
and 5. The estimates take the most concise form in case when the function $V$ satisfies $\Hess_V=K$ for some constant $K>0$.
In this case, for instance, we obtain inequalities of the same form as
in the Euclidean case:
\beg{align*}& H(\nu\,|\,\mu)\le \ff 1 2 S^2(\nu\,|\,\mu) \log\bigg(1+ \ff{I(\nu\,|\,\mu)}{KS^2(\nu\,|\,\mu)}\bigg),\\
& \W_2(\nu\,|\,\mu)\leq \ff{S(\nu\,|\,\mu)}{K^{1/2}}
\arccos\l(\exp\l(-\frac{H(\nu\,|\,\mu)}{S^2(\nu\,|\,\mu)}\r)\r),\ \
\nu\in \scr P(M),\end{align*} and there exists a constant $C>0$ such
that
 $$\l(\int |f|^p\, \vd \nu\r)^{1/p}\leq C\l(S_p(\nu\,|\,\gg)+\sqrt{p}\Big(\int |\tau|_{\rm op}^{p/2}\vd \nu\Big)^{1/p}\r),\quad \nu(f)=0,\ |\nn f|\le 1,\ \tau\in \S_\nu.$$

 The remainder of this paper is organized as follows. In Section 2 we
 study Hessian estimates for $P_t$ following the lines of
 \cite{Wang19}. Such estimates which are interesting in themselves,
 serve as crucial tools for extending \eqref{HSI}, \eqref{W2} and
 \eqref{FP} to the general geometric setting in Sections 3, 4 and 5
 respectively.  We work out some examples in Section 3.1.

\section{Hessian estimate of $P_t$}\label{section-hessian}
Let $(M,g)$ be a $n$-dimensional complete Riemannian manifold.  We
write $\langle u,v \rangle=g(u,v)$ and
$|u|=\sqrt{\langle u,u \rangle}$ for $u,v\in T_xM$ and $x\in M$. Let
$R$, $\Ric$ be the Riemann curvature tensor and Ricci curvature tensor
respectively. Recall that
$R\in \Gamma(T^*M\otimes T^*M\otimes T^*M\otimes TM)$ where
$$R(X,Y,Z)\equiv R(X,Y)Z=\nabla_X\nabla_YZ-\nabla_Y\nabla_XZ -\nabla_{[X,Y]}Z,\quad
X,Y,Z \in \Gamma(TM),$$ and $\Ric\in \Gamma(T^*M\otimes T^*M)$ given
as $\Ric(Y,Z)=\tr \big(X\mapsto R(X,Y)Z\big)$.

\begin{enumerate}[1.]
\item For $f,h\in C^2(M)$ and $x\in M$, we consider the
  Hilbert-Schmidt inner product of the Hessian tensors $\Hess_f$ and
  $\Hess_h$, i.e.
  \begin{align*}
    \langle \Hess_f, \Hess_h\rangle _\HS=\sum_{i,j=1}^n \Hess_f(X_i,X_j)\Hess_h(X_i,X_j),
  \end{align*}
  where $(X_i)_{1\leq i\leq n}$ denotes an orthonormal base of
  $T_xM$. Then the Hilbert-Schmidt norm of $\Hess_f$ is given by
  \begin{align*}
    |\Hess_f|_\HS(x)=\sqrt{\langle \Hess_f,\Hess_f \rangle_\HS}.
  \end{align*}

\item For a symmetric 2-tensor $T$ and a constant $K$, we write
  $T\geq K$ if
  \begin{align*}
    T(w,w)\geq K|w|^2,\quad w\in T_xM,\ x\in M,
  \end{align*}
  and $T\leq K$ if
  \begin{align*}
    T(w,w)\leq K|w|^2,\quad w\in T_xM,\ x\in M.
  \end{align*}

\item Given a symmetric 2-tensor $T$, we let
  $T^{\sharp}\colon TM\rightarrow TM$ be defined by
  \begin{align*}
    \langle T^\sharp(v),w \rangle=T(v,w),\quad v,w\in T_xM,\, x\in M.
  \end{align*}
  Then $T^\sharp$ is a symmetric endomorphism, i.e.,
  $\langle T^\sharp(w),v \rangle=\langle T^\sharp(v), w \rangle$ for
  $v,w \in T_xM$, $x\in M$. Let
  \begin{align*}
    |T|(x)=\sup\left\{|T^\sharp(w)|\colon w\in T_xM,\ |w|\leq 1\right\},\quad x\in M.
  \end{align*}
  Then, in particular, $|\Hess_f|(x)$ gives the operator norm of the
  Hessian of a function $f$ at $x$.

\item Furthermore, denoting by Bil$(TM)$ the vector bundle of bilinear
  forms on $TM$, we consider
  $\tilde{R}\in\Gamma(T^*M\otimes T^*M\otimes \text{Bil}(TM))$ given
  by
$$\tilde{R}(v_1,v_2)=\langle R(\boldsymbol\cdot,v_1)v_2,\boldsymbol\cdot\rangle,\quad v_1,v_2\in T_xM,$$
and let 
\begin{align*}
  &|\tilde{R}|(x)=\left||\tilde{R}(\boldsymbol\cdot,\boldsymbol\cdot)|_\HS\right |_{\HS}(x)\ \mbox{for}\ x\in M \ \ \mbox{and}\ \ \|\tilde{R}\|_{\infty}=\sup_{x\in M}|\tilde{R}|(x).
\end{align*}
Note that in explicit terms
\begin{align*}
  \|\tilde{R}\|_{\infty}=\sup_{x\in M}\left(\sum_{k,\ell}\sum_{i,j}\langle R(e_i,v_k)v_\ell,e_j\rangle^2
  \right)^{1/2}
\end{align*}
where $(v_k)_{1\leq k\leq n}$ and $(e_i)_{1\leq i\leq n}$ denote
orthonormal bases for $T_xM$.

\item For a general symmetric 2-tensor $T$, we adopt the notation
  \begin{align*}
    (RT)(v_1, v_2):=\tr\,\langle R(\boldsymbol\cdot, v_1)v_2, T^\sharp(\boldsymbol\cdot) \rangle =\sum_{i=1}^n\langle R(e_i, v_1)v_2,T^{\sharp}(e_i) \rangle,
  \end{align*}
  where $v_1, v_2\in T_xM,\ x\in M$ and $(e_i)_{1\leq i\leq n}$ is an
  orthonormal base of $T_xM$. Let
  \begin{align*}
    &|R|(x)=\sup \Big\{|(RT)(v_1,v_2)|\colon |v_1|\leq 1,\ |v_2|\leq 1,\ |T|\leq 1 \Big\} \quad \mbox{and}\quad  \|R\|_{\infty}=\sup_{x\in M}|R|(x).
  \end{align*}
It is easy to see that $|\tilde{R}|(x)\leq n |R|(x)$. In particular, if $\|R\|_{\infty}<\infty$ then $\|\tilde{R}\|_{\infty}<\infty$ as well.

\item In addition, let
$$\vd^*R=-\tr\,\nabla{\boldsymbol{.}}\,R,$$ i.e.,
$$(\vd^*R)(v_1,v_2)=-\tr\,\nabla{\boldsymbol{.}}\,R(\boldsymbol\cdot,v_1)v_2,\quad v_1,v_2\in T_xM.$$
Note that
\begin{align*}
  \langle (\vd^*R)(v_1,v_2),v_3\rangle=\langle (\nabla_{v_3}\Ric^{\sharp})(v_1),v_2 \rangle-\langle(\nabla_{v_2}\Ric^{\sharp})(v_3),v_1\rangle,\quad v_1,v_2,v_3\in T_xM.
\end{align*}
\item Finally, for $v,w \in T_xM$, let
  \begin{align*}
    R(\nabla V)(v,w):=R(\nabla V,v)w.
  \end{align*}
\end{enumerate}

In this section, we develop explicit Hessian estimates for the
semigroups which are derived from the second order derivative formula of the
semigroup obtained by first identifying appropriate local martingales.
Actually, the martingale approach to derivative formulas was first
developed by Elworthy and Li \cite{EL94}, after which an approach
based on local martingales has been worked out by Thalmaier
\cite{Thalmaier97} and Driver and Thalmaier \cite{DTh2001}. Although
various formulas for the Hessian appear in the literature, for
example \cite{APT, EL94, Li, Wang19, Th19, Th2020}, Hessian estimates
of the heat semigoup are not well calculated with explicit constants
depending on the curvature tensor on general Riemannian manifolds. Our
Theorems \ref{general-hessian-theorem} and
\ref{general-hessian-theorem2} fill this gap and are new in this regard.

\subsection{Hessian estimates of semigroup: type I }
Let us introduce a first type of Hessian estimate of the heat semigroup.  When $M$ is Ricci parallel and the generator of the
diffusion equals half the Laplacian~$\Delta$, such a type of formula
bounding the norm of the Hessian of $P_tf$ from
above by $P_t|\nabla f|^2$, has been already given in \cite{Wang19}.

\begin{theorem}[Hessian estimate: type I]\label{general-hessian-theorem}
  Assume that $\Ric_V\geq K$, $\|R\|_{\infty}<\infty$ and
$$\beta:=\|\nabla \Ric_V^{\sharp}+\vd^*R+R(\nabla V)\|_{\infty}<\infty.$$
Let $\alpha_1:=\|R\|_{\infty}$ and $\alpha_2:=\|\tilde{R}\|_{\infty}$. Then for $f\in C_b^2(M)$,
\begin{align*}
  &|\Hess_{P_tf}|\\
  &\leq \l(\frac{K-2\alpha_1}{\e^{(2K-2\alpha_1)t}-\e^{Kt}}\r)^{1/2}\l((P_t|\nabla f|^2)^{1/2}+ \l(\frac{\e^{Kt}-1}{K}\r)^{1/2}\frac{\beta}{K}(P_t|\nabla f|)\r).
\end{align*}
Moreover, if $\Ric_V=K$, then
\begin{align}
  &|\Hess_{P_tf}|_{\HS}\notag\\
  &\leq \l(\frac{K-2\alpha_2}{\e^{(2K-2\alpha_2)t}-\e^{Kt}}\r)^{1/2}\l((P_t|\nabla f|^2)^{1/2}+ \l(\frac{\e^{Kt}-1}{K}\r)^{1/2}\frac{n\beta}{K}(P_t|\nabla f|)\r).\label{esti-HS-ineq}
\end{align}
\end{theorem}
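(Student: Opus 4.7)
The plan is to derive the estimate from a Bismut--Driver--Thalmaier type formula for $\Hess_{P_tf}$ on the orthonormal frame bundle $\OM$, and then to extract the two terms on the right-hand side by Cauchy--Schwarz combined with an optimally chosen Bismut weight. The constant $\alpha_1=\|R\|_\infty$ will control the exponential growth of a second-order multiplicative functional, while $\beta$ bounds a remainder coming from commuting two covariant derivatives with the generator $L$.

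Set up the stochastic calculus on $\OM$ as follows. Let $X_t$ be the $L/2$-diffusion starting at $x\in M$, with horizontal lift $U_t$ and stochastic parallel transport $\partr{0}{t}:=U_tU_0^{-1}$. Define the first-order multiplicative functional $Q_t^{(1)}\in \End(T_xM)$ by
\begin{equation*}
\frac{\vd Q_t^{(1)}}{\vd t}=-\tfrac12\,U_t^{-1}\Ric_V^\sharp(U_t\bolddot)\,Q_t^{(1)},\qquad Q_0^{(1)}=\id.
\end{equation*}
The assumption $\Ric_V\ge K$ gives $|Q_t^{(1)}|\le \e^{-Kt/2}$, and the standard derivative formula reads $\nn P_tf(x)=\E[(Q_t^{(1)})^{*}U_t^{-1}\nn f(X_t)]$.

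Now differentiate once more along $w\in T_xM$. Applying It\^o's formula to the two-parameter process $s\mapsto \nn_{\partr{0}{s}w} P_{t-s}f(X_s)$, the drift involves $[\nn_w,L]$ acting on $\nn P_{t-s}f$, which by the second Bianchi identity expands exactly into the tensor $\nn\Ric_V^\sharp+\vd^*R+R(\nn V)$ appearing in the definition of $\beta$; the principal part produces a second-order multiplier $Q_t^{(2)}$ on $T_xM$ whose ODE coefficient is $\Ric_V^\sharp$ plus a curvature correction controlled pointwise by $\alpha_1$. Following the local martingale approach of \cite{Thalmaier97,DTh2001}, one removes the second derivative of $f$ from the right-hand side by integrating the resulting identity against a scalar Bismut weight $k$ with $k(0)=0$ and $k(t)=1$, which yields a formula of the form
\begin{equation*}
\Hess_{P_tf}(v,w)=\E\bigl[\langle A_t(v,w),U_t^{-1}\nn f(X_t)\rangle + B_t(v,w;U_t^{-1}\nn f(X_t))\bigr],
\end{equation*}
where the $L^2$-norm of $A_t$ is governed by $\int_0^t\dot k(s)^2\,\e^{-(2K-2\alpha_1)s}\,\e^{Ks}\,\vd s$ and $B_t$ is bounded pointwise by $\beta$ via Gronwall.

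Minimizing the above integral under the boundary constraints on $k$ yields exactly the prefactor $(K-2\alpha_1)/(\e^{(2K-2\alpha_1)t}-\e^{Kt})$, since this equals the reciprocal of $\e^{Kt}\int_0^t\e^{(K-2\alpha_1)s}\,\vd s$. Cauchy--Schwarz on $A_t$ then supplies the $(P_t|\nn f|^2)^{1/2}$ term with this prefactor, while a further Gronwall estimate on $B_t$ using $|\nn P_sf|\le \e^{-Ks/2}P_s|\nn f|$ supplies the second summand with the factor $(\beta/K)((\e^{Kt}-1)/K)^{1/2}$. Under the rigidity assumption $\Ric_V=K$, the same argument is run while tracking Hilbert--Schmidt norms: the second-order multiplier is then controlled by $\|\tilde R\|_\infty=\alpha_2$, and the additional factor $n$ in front of $\beta$ arises when converting an operator-norm bound on the commutator term into its Hilbert--Schmidt counterpart via a trace. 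The main obstacle is the delicate bookkeeping required to identify the remainder as the single invariant $\nn\Ric_V^\sharp+\vd^*R+R(\nn V)$---this hinges on the second Bianchi identity combined with the divergence appearing in $\vd^*R$---and to choose the Bismut weight $k$ optimally enough that the prefactor is exactly the one stated; any less careful optimization produces a strictly worse constant.
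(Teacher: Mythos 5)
The paper does not prove Theorem \ref{general-hessian-theorem} via a Bismut weight; that technique is reserved for the type~II estimate (Theorem \ref{general-hessian-theorem2}). Here the paper starts from the unweighted representation formula of Lemma~2.2, which \emph{retains} the $P_t|\Hess_f|$ term, then derives the Volterra-type inequality
$|\Hess_{P_tf}|\le \e^{-Kt}P_t|\Hess_f|+\text{($\beta$ term)}+\|R\|_\infty\int_0^t\e^{-Ks}P_s|\Hess_{P_{t-s}f}|\,\vd s$,
applies Gronwall's lemma, and only afterwards removes $\Hess_f$ by inserting the Bochner--Weitzenb\"ock lower bound $P_t|\nn f|^2-\e^{Kt}|\nn P_tf|^2\ge\int_0^t\e^{K(t-s)}P_s|\Hess_{P_{t-s}f}|^2_\HS\,\vd s$ and a reverse Jensen/Cauchy--Schwarz step. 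It is precisely this self-referential Gronwall structure (bounding $|\Hess_{P_tf}|$ by an integral of itself and then inverting) that forces the common prefactor $\bigl(\tfrac{K-2\alpha_1}{\e^{(2K-2\alpha_1)t}-\e^{Kt}}\bigr)^{1/2}$ to appear in front of \emph{both} summands.

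Your proposal has a genuine gap at the central step. In the Bismut-weight formula (cf.\ Lemma~2.6 and the proof of Theorem \ref{general-hessian-theorem2}), the $R$-contribution in $W^k_t$ enters as a \emph{separate} stochastic integral whose $L^2$-norm is controlled by $\alpha_1\bigl(\int_0^t\e^{-Ks}k(s)^2\,\vd s\bigr)^{1/2}$, \emph{additive} to the $\bigl(\int_0^t\e^{-Ks}\dot k(s)^2\,\vd s\bigr)^{1/2}$ piece. There is no ``second-order multiplicative functional'' here whose decay rate is $\e^{-(K-2\alpha_1)s/2}$: the damped parallel transport $Q_s$ only decays like $\e^{-Ks/2}$, and $\alpha_1$ enters as a multiplicative coefficient, not as a shift of the exponent. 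Thus your claim that the $L^2$-norm of $A_t$ is governed by $\int_0^t\dot k(s)^2\,\e^{-(K-2\alpha_1)s}\,\vd s$ is unsubstantiated; you cannot merge the two additive pieces into a single integral with the modified exponent $K-2\alpha_1$ by an Euler--Lagrange optimization of $k$. The same objection applies to the $\beta$-piece: you state that a Gronwall bound on $B_t$ ``supplies the second summand with the factor $(\beta/K)((\e^{Kt}-1)/K)^{1/2}$,'' but this does not explain why the common prefactor $\bigl(\tfrac{K-2\alpha_1}{\e^{(2K-2\alpha_1)t}-\e^{Kt}}\bigr)^{1/2}$ also multiplies it, which is an artifact of the paper's Gronwall + Bochner strategy and has no analogue in a one-shot Bismut-weight bound. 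If you want to follow your route, you will end up with the weaker, structurally different estimate of the type II theorem, not the stated one.
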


To prove Theorem \ref{general-hessian-theorem}, we first introduce a
probabilistic representation formula for $\Hess_{P_tf}$. For the
semigroup $P_t$ generated by $\Delta/2$, a Bismut type Hessian formula
has been established in~\cite{APT}, which was then extended to general
Schr\"{o}dinger operators on $M$ \cite{Li, Th19}.

Denote by $\Ric^{\sharp}_V=\Ric^{\sharp}+\Hess_V^{\sharp}$ the
Bakry-\'Emery tensor (written as endomorphism of $TM$).  The damped
parallel transport $Q_t\colon T_xM\rightarrow T_{X_t}M$ is defined as
the solution, along the paths of $X_t$, to the covariant ordinary
differential equation
\begin{align*}
  D Q_t=-\frac12 \Ric^{\sharp}_VQ_t\, \vd t,\quad Q_0=\id,
\end{align*}
where the covariant differential is given by $\ptr_t^{-1}\,D=\vd\,\ptr_t^{-1}$.

For $w\in T_xM$, we define an operator-valued
process $W_t(\boldsymbol\cdot, w): T_{x}M\rightarrow T_{X_t}M$ by
\begin{align*}
  W_t(\boldsymbol\cdot, w):=&Q_t\int_0^tQ_r^{-1}R(\ptr_r \, \vd B_r, Q_r(\boldsymbol\cdot))Q_r(w)\\
                 &-\frac12 Q_t\int_0^tQ_r^{-1}\big(\nabla \Ric_V^{\sharp}+\vd^*R+R(\nabla V)\big)\big(Q_r(\boldsymbol\cdot), Q_r(w)\big)\,\vd r.
\end{align*}
Note that $W_t(\boldsymbol\cdot, w)$ is the solution to the covariant It\^{o}
equation
\begin{align*}
  DW_t(\boldsymbol\cdot, w)
  &=R(\ptr_t\, \vd B_t, Q_t(\boldsymbol\cdot)) Q_t(w)-\frac12 \Ric_V^{\sharp}(W_t(\boldsymbol\cdot, w))\, \vd t\\
  &\quad-\frac12(\vd^*R+\nabla\Ric_V^{\sharp}+R(\nabla V))(Q_t(\boldsymbol\cdot), Q_t(w))\,\vd t,
\end{align*}
with initial condition $W_0(\boldsymbol\cdot, w)=0$.

\begin{lemma}\label{Hess-prop-1}
 Let $\rho$ be the Riemannian distance to a fixed point $o\in M$.
  Assume that
  \begin{align*}
    \lim_{\rho\rightarrow \infty}\frac{\log \Big(\big|\vd^*R+\nabla\Ric_V^{\sharp}+R(\nabla V)\big|+|R|\Big)}{\rho^2}=0,
  \end{align*}
  and
  \begin{align*}
    \Ric_V\geq -h(\rho)\quad \mbox{for some positive function } h\in C([0,\infty)) \mbox{ such that }\ \lim_{r\rightarrow \infty}\frac{h(r)}{r^2}=0.
  \end{align*}
  Then
  \begin{align*}
    \Hess_{P_tf}(v,w)=\E\l[\Hess_f(Q_t(v),Q_t(w))+\langle \nabla f(X_t), W_t(v,w)\rangle\r].
  \end{align*}
\end{lemma}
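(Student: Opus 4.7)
This is a second-order Bismut-type formula, and I would prove it using the local-martingale approach of Thalmaier \cite{Thalmaier97} and Driver--Thalmaier \cite{DTh2001}. For fixed $v,w\in T_xM$, introduce the process
\begin{align*}
N_s := \Hess_{P_{t-s}f}\bigl(Q_s(v),Q_s(w)\bigr) + \langle \nabla P_{t-s}f(X_s),\, W_s(v,w)\rangle,\qquad s\in[0,t].
\end{align*}
Since $Q_0=\id$, $W_0=0$ and $P_0 f=f$, one has $N_0=\Hess_{P_tf}(v,w)$, while $N_t$ is precisely the random variable inside the claimed expectation. The entire lemma thus reduces to showing that $(N_s)_{s\in[0,t]}$ is a true martingale.

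The core computation is to verify by Itô's formula that the drift of $dN_s$ vanishes. Three ingredients enter: (a) the identity $\partial_s P_{t-s}f = -\tfrac12 L P_{t-s}f$; (b) the $V$-modified Weitzenböck formula on $1$-forms, $\nabla L g = L^{(1)}\nabla g - \Ric_V^{\sharp}\nabla g$, which is exactly the identity making the damping $DQ_s = -\tfrac12\Ric_V^{\sharp}Q_s\,\vd s$ kill the drift in the first-order Bismut formula (and which also accounts here for the $\Ric_V^{\sharp}W_s$ term in the SDE for $W_s$); and (c) the analogous Weitzenböck identity on symmetric $2$-tensors, which schematically takes the form
\begin{align*}
\Hess_{L g} = L^{(2)}\Hess_g - \bigl(\Ric_V^{\sharp}\!\cdot\Hess_g + \Hess_g\!\cdot\Ric_V^{\sharp}\bigr) + 2\,R\,\Hess_g - \bigl\langle\bigl(\nabla\Ric_V^{\sharp}+\vd^{*}R+R(\nabla V)\bigr)(\boldsymbol\cdot,\boldsymbol\cdot),\,\nabla g\bigr\rangle,
\end{align*}
with $L^{(2)}$ the natural lift of $L$ to symmetric $2$-tensor fields; this is the origin of the curvature-Hessian interaction $R\,\Hess_g$ and of the third-order object $\nabla\Ric_V^{\sharp}+\vd^{*}R+R(\nabla V)$ appearing in the definition of $W_s$. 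Combining (a) and (c) with the drift from $DQ_s$ shows, after cancellation, that the drift of $\Hess_{P_{t-s}f}(Q_sv,Q_sw)$ reduces to $-R\,\Hess_{P_{t-s}f}(Q_sv,Q_sw)\,\vd s + \tfrac12\bigl\langle(\nabla\Ric_V^{\sharp}+\vd^{*}R+R(\nabla V))(Q_sv,Q_sw),\,\nabla P_{t-s}f\bigr\rangle\vd s$. The covariant Itô equation for $W_s$ is tuned precisely so that the drift contribution to $\langle \nabla P_{t-s}f(X_s),W_s(v,w)\rangle$ cancels the third-order term, while the quadratic covariation of the two martingale parts, namely $R(\ptr_s\vd B_s,Q_sv)Q_sw$ from $W_s$ and $\Hess_{P_{t-s}f}^{\sharp}(\ptr_s\vd B_s)$ from $\nabla P_{t-s}f(X_s)$, produces $\sum_i \Hess_{P_{t-s}f}(\ptr_s e_i,R(\ptr_s e_i,Q_sv)Q_sw)\,\vd s = R\,\Hess_{P_{t-s}f}(Q_sv,Q_sw)\,\vd s$ by the definition of the operation $RT$ in the introduction, cancelling the remaining term. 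Hence $N_s$ is a local martingale on $[0,t]$.

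The main obstacle is promoting $N_s$ from a local to a true martingale, and the two growth assumptions are designed for exactly this purpose. The lower bound $\Ric_V\ge -h(\rho)$ with $h(r)/r^{2}\to 0$ rules out explosion of $(X_s)$ and yields $\E\exp\bigl(c\int_0^t h(\rho(X_s))\,\vd s\bigr)<\infty$ for every $c>0$ by standard radial comparison; Gronwall applied pathwise to the ODE $\vd|Q_s|^{2}\le|\Ric_V^{\sharp}|\,|Q_s|^{2}\,\vd s$ then gives $\sup_{s\le t}\E|Q_s|^{2p}<\infty$ for every $p\ge 1$, and the subquadratic-growth hypothesis on $|R|$ and $|\vd^{*}R+\nabla\Ric_V^{\sharp}+R(\nabla V)|$ propagates the same to arbitrary moments of $|W_s|$. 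Combined with uniform boundedness on $[0,t]$ of $\nabla P_{t-s}f$ and $\Hess_{P_{t-s}f}$ for $f\in C_b^{2}(M)$, which follows from the first-order Bismut formula and standard heat-semigroup smoothing under the Bakry-Émery-type bound available here, this produces the uniform integrability needed to pass to expectations, so $\E N_0 = \E N_t$ yields the claim.
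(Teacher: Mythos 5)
Your proposal is correct and follows essentially the same route as the paper: you construct the same process $N_s$, verify it is a local martingale by combining the backward heat equation with the commutator (Weitzenböck-type) identities for $\nabla\vd L$ — which is exactly what the paper computes term by term via the Itô differential — and then promote it to a true martingale using the two growth hypotheses to control moments of $Q_s$, $W_s$ and to bound $\nabla P_{t-s}f$, $\Hess_{P_{t-s}f}$ uniformly. The paper cites Wang's Proposition 3.1 for the moment bounds rather than rederiving them, but the logical structure and the cancellation mechanism you describe match the paper's proof.
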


\begin{proof}
  For fixed $T>0$, set
  \begin{align*}
    N_t(v,w):= \Hess_{P_{T-t}f}(Q_t(v), Q_t(w))+\langle\nabla P_{T-t}f(X_t), W_t(v,w)\rangle.
  \end{align*}
  We first recall that $N_t(v,w)$ is a local martingale, which has been shown  e.g. in \cite[Lemma 11.3]{Th2020}. We include a proof here for the convenience of the reader.
  We first observe that
  \begin{align*}
    \vd(\Delta-\nabla V)f&=\left(\tr \nabla^2-\nabla _{\nabla V}\right)  \vd f-\vd f(\Ric_V^{\sharp}),\\
    \nabla \vd (\Delta f)&=\tr \nabla^2(\nabla \vd f)-(\nabla \vd f)(\Ric^{\sharp}\odot\id+\id\odot\Ric^{\sharp}-2R^{\sharp, \sharp})-\vd f(\vd ^*R+\nabla \Ric^{\sharp}),\\
    \nabla \vd (\nabla V(f))&=\nabla_{\nabla V}(\nabla \vd f)+(\nabla \vd f)(\Hess_V^{\sharp}\odot\id+\id\odot\Hess_V^{\sharp})+\vd f(\nabla \Hess_V^{\sharp} +R(\nabla V)),
  \end{align*}
  where $\odot$ denotes the symmetric tensor product.
  Thus, for the It\^o differential of $N_t(v,w)$, we obtain
  \begin{align*}
    \vd N_t(v,w)&=(\nabla_{\ptr_t\,\vd B_t}\Hess_{P_{T-t}f})(Q_t(v), Q_t(w))+\Hess_{P_{T-t}f}\l(\frac{D }{\vd  t} Q_t(v), Q_t(w)\r)\vd t\\
                &\quad+\Hess_{P_{T-t}f}\l(Q_t(v),\frac{D}{\vd t} Q_t(w)\r)\vd t+\partial_t(\Hess_{P_{T-t}f})(Q_t(v),Q_t(w))\,\vd t
    \\
                &\quad+\frac12\tr(\nabla^2-\nabla_{\nabla V})(\Hess_{P_{T-t}f})(Q_t(v),Q_t(w))\,\vd t+(\nabla_{\ptr_t\, \vd B_t}\vd P_{T-t}f)(W_t(v,w))\\
                &\quad+(\vd P_{T-t}f)(DW_t(v,w))+\langle D(\vd P_{T-t}f), DW_t(v,w)\rangle+\partial_t(\vd P_{T-t}f)(W_t(v,w))\,\vd t\\
                &\quad+\frac12\tr(\nabla^2-\nabla_{\nabla V})(\vd P_{T-t}f)(W_t(v,w))\, \vd t\\
                &\mequal-\frac12\Hess_{P_{T-t}f}\l(\Ric_V^{\sharp}(Q_t(v)), Q_t(w)\r)\vd t-\frac12\Hess_{P_{T-t}f}\l(Q_t(v),\Ric_V^{\sharp} (Q_t(w))\r)\vd t\\
                &\quad-\frac12(\nabla \vd (\Delta-\nabla V){P_{T-t}f})(Q_t(v),Q_t(w))\,\vd t
                  +\frac12\left(\tr\nabla^2-\nabla_{\nabla V}\right)(\Hess_{P_{T-t}f})(Q_t(v),Q_t(w))\,\vd t\\
                &\quad-\frac12(\vd P_{T-t}f)(\vd^*R+\nabla\Ric_V^{\sharp}+R(\nabla V))(Q_t(v), Q_t(w))\,\vd t\\
                &\quad-\frac12(\vd P_{T-t}f)(\Ric_V^{\sharp}(W_t(v, w)))\, \vd t+\tr\l\{\Hess_{P_{T-t}f}(\boldsymbol\cdot, R(\boldsymbol\cdot, Q_t(v))Q_t(w))\r\}\,\vd t\\
                &\quad-\frac12(\vd (\Delta-\nabla V)P_{T-t}f)(W_t(v,w))\,\vd t+\frac12\left(\tr\nabla^2-\nabla_{\nabla V}\right)(\vd P_{T-t}f)(W_t(v,w))\, \vd t\\
                &=0,
  \end{align*}
  where $\mequal$ denotes equality modulo differentials of local martingales,
  so that $N_t$ is a local martingale.  Assume that
  \begin{align*}
    \lim_{\rho\rightarrow \infty}\frac{\log \l(|\vd^*R+\nabla\Ric_V^{\sharp}+R(\nabla V)|+|R|\r)}{\rho^2}=0,
  \end{align*}
  and
  \begin{align*}
    \Ric_V\geq -h(\rho)\quad \mbox{for some positive } \ h\in C([0,\infty))\ \mbox{with}\ \ \lim_{r\rightarrow \infty}\frac{h(r)}{r^2}=0.
  \end{align*}
  Then by \cite[Proposition 3.1]{Wang19}, for $t>0$ we have
  \begin{align*}
    &\E\left[\sup_{s\in [0,t]}|Q_s|^2\right]<\infty \quad \mbox{and}\quad \E\left[\sup_{s\in [0,t]}|W_s|^2\right]<\infty.
  \end{align*}
  In addition, $|\nabla P_{T-t}|(x)$ and $|\Hess _{ P_{T-t}}|(x)$ are easy to bound
  by local Bismut type formulae \cite{APT, Th19}.
  Under our curvature assumptions these local bounds then provide global bounds
  uniformly in $(t,x)\in {[0,T-\varepsilon]}\times M$ for every small $\varepsilon>0$. 
  Thus the local martingale $N_t$ is a true martingale on the time interval
  $[0,T-\varepsilon]$. By taking expectations, we first obtain $\E[N_0]=\E[N_{T-\varepsilon}]$
  and then
  \begin{equation*}
    \Hess_{P_Tf}(v,w)=\E\l[\Hess_f(Q_T(v),Q_T(w))+\langle \nabla f(X_T), W_T(v,w)\rangle\r]
  \end{equation*}
  by passing to the limit as $\varepsilon\downarrow0$.
  Note that since the manifold is complete,
  we have by the spectral theorem $\vd P_tf=P_t\vd f$ where $P_tdf(v)=\E[(\vd f)(X_t)Q_tv]$ is the
  canonical heat semigroup on $1$-forms (see \cite{DTh2001}).
\end{proof}

According to the definition of $W_t$, we have
\begin{align*}
\E\langle \nabla f(X_t), W_t(v,w) \rangle= &\E\Big\langle \nabla f(X_t), Q_t\int_0^tQ_r^{-1}R(\ptr_r\,\vd B_r, Q_r(v))Q_r(w) \Big \rangle \\
&-\frac12 \E  \Big\langle \nabla f(X_t), Q_t\int_0^tQ_r^{-1}\big(\nabla \Ric_V^{\sharp}+\vd^*R+R(\nabla V)\big)\big(Q_r(v), Q_r(w)\big)\,\vd r \Big \rangle.
\end{align*}
To deal with the first term on the right hand side, we observe that

\begin{lemma}\label{lem-1}
  Keeping the assumptions of Lemma \ref{Hess-prop-1}, we have
\begin{align*}
    \E\left[\Big\langle\nabla f(X_t), Q_t\int_0^tQ_r^{-1}R(\ptr_r\,\vd B_r, Q_r(v))Q_r(w)\Big\rangle\right]=\E\left[\int_0^t (R\Hess_{P_{t-s}f}) (Q_s(v),Q_s(w)\,\vd s\right].
\end{align*}
\end{lemma}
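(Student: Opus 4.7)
The plan is to recognize the left-hand side as the expectation of a quadratic covariation of two martingales, and then to identify the resulting $\vd s$-integrand with $(R\Hess_{P_{t-s}f})(Q_s v, Q_s w)$.

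The starting point is the first-order Bismut formula (see \cite{APT, DTh2001, Th19}): for every fixed $u\in T_xM$ the process
$$\tilde{N}^u_s:=\langle \nabla P_{t-s}f(X_s), Q_s u\rangle,\quad s\in[0,t],$$
is a local martingale whose stochastic differential is $\Hess_{P_{t-s}f}(\ptr_s\,\vd B_s, Q_s u)$. This follows from the same It\^o computation as in the proof of Lemma \ref{Hess-prop-1}, with the $W_t$-correction dropped. I would introduce the companion $T_xM$-valued martingale
$$K_s:=\int_0^s Q_r^{-1} R(\ptr_r\,\vd B_r, Q_r v) Q_r w,$$
so that $Q_t K_t$ coincides with the first summand in the definition of $W_t(v,w)$.

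Fixing an orthonormal basis $(e_a)$ of $T_xM$, I would then decompose
$$\langle \nabla f(X_t), Q_t K_t\rangle = \sum_a \langle K_t, e_a\rangle\,\tilde{N}^{e_a}_t,$$
apply the It\^o product rule to each summand, and use $K_0=0$ together with the fact that the pure martingale parts contribute zero expectation (justified by the same moment bounds on $Q$, $\nabla P_{t-s}f$ and $\Hess_{P_{t-s}f}$ already used in the proof of Lemma \ref{Hess-prop-1}). This yields
$$\E\bigl[\langle \nabla f(X_t), Q_t K_t\rangle\bigr]=\sum_a \E\bigl[[\langle K_\bolddot, e_a\rangle,\tilde{N}^{e_a}_\bolddot]_t\bigr].$$

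The decisive step is the computation of this covariation. Pairing the $\vd B^i$-coefficients of $\vd K_s^a$ and $\vd \tilde{N}_s^{e_a}$, summing over $a$, and using the identity $\sum_a \langle\eta,e_a\rangle\,Q_s e_a=\eta$ for $\eta\in T_{X_s}M$, produces
$$\sum_a \vd[\langle K_\bolddot, e_a\rangle,\tilde{N}^{e_a}_\bolddot]_s = \sum_i \Hess_{P_{t-s}f}\bigl(\ptr_s X_i,\, R(\ptr_s X_i, Q_s v) Q_s w\bigr)\,\vd s.$$
Since $(\ptr_s X_i)$ is an orthonormal basis of $T_{X_s}M$ and $\Hess_{P_{t-s}f}$ is symmetric, the right-hand side equals $(R\Hess_{P_{t-s}f})(Q_s v, Q_s w)\,\vd s$ by the definition of $(RT)$ recalled at the start of Section \ref{section-hessian}, and integrating over $s\in[0,t]$ gives the claim. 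The only genuine obstacle is the bookkeeping in this covariation computation; the integrability step needed to replace local martingales by true martingales is routine under the hypotheses of Lemma \ref{Hess-prop-1}.
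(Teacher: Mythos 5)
Your proof is correct and is essentially the paper's argument in slightly more modular form: the paper computes the It\^o differential of the single process $H_s=\langle\nabla P_{t-s}f(X_s), Q_s K_s\rangle$ and checks that the $\Ric_V^\sharp$ drift terms cancel, whereas you factor $H_s$ into the product of two processes that are each already known to be local martingales and so pass directly to the quadratic covariation, which gives the identical trace term $\tr\,\Hess_{P_{t-s}f}(\boldsymbol\cdot,\,R(\boldsymbol\cdot, Q_sv)Q_sw) = (R\Hess_{P_{t-s}f})(Q_sv,Q_sw)$. One small notational slip: the identity you invoke should read $\sum_a\langle\xi,e_a\rangle\,Q_s e_a = Q_s\xi$ for $\xi\in T_xM$ (equivalently $\sum_a\langle Q_s^{-1}\eta,e_a\rangle\,Q_s e_a=\eta$ for $\eta\in T_{X_s}M$), since $\langle\eta,e_a\rangle$ is not defined when $\eta$ and $e_a$ live in different tangent spaces.
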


\begin{proof}

  Let
  $$H_s(v,w)=\Big\langle \nabla P_{t-s}f(X_s), Q_s\int_0^s
  Q_r^{-1}R(\ptr_r\,\vd B_r, Q_r(v))Q_r(w)\Big\rangle.$$ It is easy to
  check that
  \begin{align*}
    \vd (H_s(v,w))&=\Big\langle \nabla_{\ptr_s\vd B_s}(\nabla P_{t-s}f)(X_s), Q_s\int_0^s Q_r^{-1}R(\ptr_r\,\vd B_r, Q_r(v))Q_r(w) \Big\rangle \\
                  &\quad+ \Big\langle \Ric_V^{\sharp}(\nabla P_{t-s}f)(X_s), Q_s\int_0^s Q_r^{-1}R(\ptr_r\,\vd B_r, Q_r(v))Q_r(w)\Big\rangle\,\vd s\\
                  &\quad-\Big\langle (\nabla P_{t-s}f)(X_s), \Ric_V^{\sharp}\Big(Q_s\int_0^s Q_r^{-1}R(\ptr_r\,\vd B_r, Q_r(v))Q_r(w)\Big)\Big\rangle\,\vd s\\
                  &\quad+\Big\langle (\nabla P_{t-s}f)(X_s), R(\ptr_s\,\vd B_s, Q_s(v))Q_s(w) \Big\rangle\\
                  &\quad+\tr\,\langle \nabla\boldsymbol{.}(\nabla P_{t-s}f),  R(\boldsymbol\cdot, Q_s(v))Q_s(w)\rangle \,\vd s\\
                  &\mequal\tr\l(\Hess_{P_{t-s}f}(\boldsymbol\cdot, R(\boldsymbol\cdot, Q_s(v))Q_s(w))\r)\vd s
  \end{align*}
  which implies 
  \begin{equation*}
    \E\left[\Big\langle\nabla f(X_t), Q_t\int_0^tQ_s^{-1}R(\ptr_s\,\vd B_s, Q_s(v))Q_s(w)\Big\rangle\right]\
    =\E\left[\int_0^t \tr\l(\Hess_{P_{t-s}f}(\boldsymbol\cdot, R(\boldsymbol\cdot, Q_s(v))Q_s(w))\r)\vd s\right].
  \end{equation*}
\end{proof}

With these two lemmas we are now in position to prove Theorem
\ref{general-hessian-theorem}.

\begin{proof}[Proof of Theorem \ref{general-hessian-theorem}]
  We begin with the following observation obtained by combining the
  formulas in Lemmas \ref{Hess-prop-1} and \ref{lem-1}:
  \begin{align*}
    \Hess_{P_tf}(v,w)&=\E\left[\Hess_{f}(Q_t(v), Q_t(w))\right]+\E\left[\langle\nabla f(X_t), W_t(v,w) \rangle\right]\\
                     &=\E\left[\Hess_f(Q_t(v), Q_t(w))\right]+\E\left[\Big\langle\nabla f(X_t),Q_t\int_0^tQ_r^{-1}R(\ptr_r\vd B_r, Q_r(v))Q_r(w)\Big \rangle\right]\\
                     &\quad -\frac12\E\left[\Big\langle\nabla f(X_t),Q_t\int_0^tQ_r^{-1}(\nabla \Ric_V^{\sharp}+\vd^*R+R(\nabla V))(Q_r(v), Q_r(w))\,\vd r \Big\rangle\right]\\
                     &=\E\left[\Hess_f(Q_t(v), Q_t(w))\right]+\E\left[\int_0^t\tr\left(\Hess_{P_{t-s}f}(\boldsymbol\cdot, R(\boldsymbol\cdot, Q_s(v))Q_s(w))\right)\vd s\right]\\
                     &\quad -\frac12\E\left[\Big\langle\nabla f(X_t),Q_t\int_0^tQ_r^{-1}(\nabla \Ric_V^{\sharp}+\vd^*R+R(\nabla V))(Q_r(v), Q_r(w))\,\vd r \Big\rangle\right].
  \end{align*}
  Noting that $|Q_tQ_{r}^{-1}|\le \e^{-K(t-r)/2}$,
  $|Q_r|\le \e^{-Kr/2}$, and
  \begin{align*}
    \tr\left(\Hess_{P_{t-s}f}(\boldsymbol\cdot, R(\boldsymbol\cdot, Q_s(v))Q_s(w))\right)
    &\leq \e^{-Ks}\,|\Hess_{P_{t-s}f}|(X_{s})\, \|R\|_{\infty},
  \end{align*}
  where $(e_i)_{1\leq i\leq n}$ is an orthonormal base of $T_xM$, we
  derive
  \begin{align*}
    |\Hess_{P_tf}|
    &\leq \e^{-Kt}\,P_t|\Hess_f|+\|R\|_{\infty}\int_0^t\e^{-Ks}P_s|\Hess_{P_{t-s}f}|\,\vd s 
      +\frac \bb 2 \l( \int_0^t\e^{-K(t+r)/2} \vd r\r)  P_t|\nn f|\\
    &= \e^{-Kt}\,P_t|\Hess_f|+\ff{\bb  (\e^{-Kt/2}-\e^{-Kt}) }K P_t|\nn f|  +\|R\|_\infty\int_0^t\e^{-Ks}P_s|\Hess_{P_{t-s}f}|\,\vd s,\ \ t\ge 0. 
  \end{align*}
  Now let
$$ \phi(r):=\e^{-K(t-r)}P_{t-r}|\Hess_{P_rf}|,\ \   r\in [0,t].$$
Applying the above estimate for $P_rf$ instead of $P_tf$, and noting
that $\ff {\e^{Kr/2}-1}K$ is increasing in $r$, we obtain
\begin{align*}
  \phi(r)&\leq \phi(0)+  \bb \e^{-Kt} \ff {\e^{Kr/2}-1}K P_{t-r} (P_r|\nn f| ) +\|R\|_\infty\int_0^r\phi(r-s)\,\vd s\\
         &\le \phi(0)+ \ff{\bb(\e^{-Kt/2}-\e^{-Kt})}K P_t|\nn f|   + \|R\|_\infty\int_0^r\phi(s)\,\vd s,\ \ r\in [0,t].  \end{align*}
       By Gronwall's lemma, this implies 
       \begin{equation}\label{W1} \begin{split}
           |\Hess_{P_tf}|&= \phi(t)\leq \bigg\{\phi(0)+ \ff{\bb(\e^{-Kt/2}-\e^{-Kt})}K P_t|\nn f|\bigg\} \,\e^{\|R\|_\infty t}\\
           &=\e^{(\|R\|_\infty-K)t}P_t|\Hess_f| +
           \ff{\bb\,\e^{\|R\|_\infty
               t}\big(\e^{-Kt/2}-\e^{-Kt}\big)}K P_t|\nn f|.
         \end{split}\end{equation}
       On the other hand, by It\^{o}'s formula we have
       \begin{align*}
         \vd |\nabla P_{t-s}f|^2(X_s)&=\frac12\l(L|\nabla P_{t-s}f|^2(X_s)-\langle\nabla P_{t-s}f,\nabla LP_{t-s}f \rangle(X_s)\r)\vd s\\
                                     &\quad+\langle \nabla |\nabla P_{t-s}f|^2(X_s),\ptr_s \vd B_s \rangle,\quad s\in [0,t].
       \end{align*}
       Using the Bochner-Weitzenb\"{o}ck formula and the assumption
       $\Ric_V\geq K$, we obtain
       \begin{align*}
         &  \vd |\nabla P_{t-s}f|^2(X_s)\\
         &\geq \l(\Ric_V(\nabla P_{t-s}f,\nabla P_{t-s}f)+|\Hess_{P_{t-s}f}|^2_\HS\r)(X_s)\,\vd s
           +\langle \nabla |\nabla P_{t-s}f|^2(X_s),\ptr_s \vd B_s \rangle\\
         &\geq  K|\nabla P_{t-s}f|^2(X_s)\,\vd s
           +|\Hess_{P_{t-s}f}|^2_\HS(X_s)\,\vd s
           +\langle \nabla |\nabla P_{t-s}f|^2(X_s),\ptr_s \vd B_s \rangle.
       \end{align*}
       From this, we conclude that
       \begin{align*}
         P_t|\nabla f|^2-\e^{Kt}|\nabla P_tf|^2\geq \int_0^t\e^{K(t-s)}P_s|\Hess_{P_{t-s}f}|_\HS^2\, \vd s.
       \end{align*}
       By the inequalities of Jensen and Schwartz, this yields
       \beg{align*}
        \e^{-Kt/2}  (P_t|\nabla f|^2)^{1/2} &\ge \bigg(\int_0^t \e^{-2\|R\|_{\infty}s} \e^{(2\|R\|_{\infty}-K)s} (P_s|\Hess_{P_{t-s}f}|_\HS)^2\,\d s\bigg)^{1/2}\\
       & \ge \left(\ff
       {K-2\|R\|_{\infty}}{\e^{(K-2\|R\|_{\infty})t}-1}
       \right)^{1/2} \int_0^t \e^{-\|R\|_{\infty}s}
       P_s|\Hess_{P_{t-s}f}|_{\HS} \,\d s.\end{align*}
     Combining this
     with \eqref{W1} for $(P_s, P_{t-s}f)$ instead of  $(P_t, f)$, and
     noting that $|\nn P_{t-s}f|\le \e^{-K(t-s)/2}P_{t-s}|\nn f|$, we
     arrive at
  \begin{align*}
& \e^{-Kt/2}\left(\ff {\e^{(K-2\|R\|_{\infty})t}-1} {K-2\|R\|_{\infty}} \right)^{1/2}(P_t|\nn f|^2)^{1/2}\\
  &\ge \int_0^t \e^{-\|R\|_{\infty}s} P_s|\Hess_{P_{t-s}f}|_{\HS} \d s \\
  &\ge \int_0^t \e^{-\|R\|_{\infty}s} \left(\e^{(K-\|R\|_\infty)s}|\Hess_{P_tf}| -\ff{\bb\,\e^{Ks}(\e^{-Ks/2}-\e^{-Ks})}K P_s|\nn P_{t-s}f|\right)\d s\\
  &\ge \ff{\e^{(K-2\|R\|_\infty) t}-1}{K-2\|R\|_\infty} |\Hess_{P_tf}| - \bb (P_t|\nn f|)\,\e^{-Kt/2}\int_0^t \e^{(K-\|R\|_{\infty})s} \ff{1-\e^{-Ks/2}}K\,\d s\\
 &\ge \ff{\e^{(K-2\|R\|_\infty) t}-1}{K-2\|R\|_\infty} |\Hess_{P_tf}| - \ff{\beta}K\,\e^{-Kt/2}\l( \ff{\e^{(K-2\|R\|_\infty) t}-1}{K-2\|R\|_\infty}\r)^{1/2} \l(\frac{\e^{Kt}-1}{K}\r)^{1/2} (P_t|\nn f|).
\end{align*}
This completes the proof of the first inequality.

For the second case, when
  $\Ric_V= K$ we realize that 
  $Q_t(v)=\e^{-Kt/2}\ptr_tv$ for $v\in T_xM$, and that for all $f\in C_b^2(M)$ and
  $v,w\in T_xM$ such that $|v|=|w|=1$,
  \begin{align}
    &\Hess_{P_{t}f}(v,w)\\
    &=\E\left[\Hess_f(Q_t(v), Q_t(w))\right]+\E\int_0^t(R\Hess_{P_{s}f})(Q_{t-s}v, Q_{t-s}w)\, \vd s\notag\\
                     &\quad -\frac12\E\left[\Big\langle\nabla f(X_t),Q_t\int_0^tQ_r^{-1}(\nabla \Ric_V^{\sharp}+\vd^*R+R(\nabla V))(Q_r(v), Q_r(w))\,\vd r \Big\rangle\right]\notag\\
    &=\e^{-tK}\E\l[\Hess_f(\ptr_tv, \ptr_tw)(X_t)\r] +\ff{\bb  (\e^{-Kt/2}-\e^{-Kt}) }K P_t|\nn f| \notag\\
&\quad+ \int_{0}^{t}\E \l[\e^{-K(t-s)}(R\Hess_{P_sf})(\ptr_{t-s}v, \ptr_{t-s}w) \r]\, \vd s \notag \\
    &\leq \e^{-tK}\E\l[\Hess_f(\ptr_tv, \ptr_tw)(X_t)\r]+\ff{\bb  (\e^{-Kt/2}-\e^{-Kt}) }K P_t|\nn f|\notag\\
 &\quad +\int_{0}^{t}\E\l[\e^{-K(t-s)}\,\tr\langle \Hess_{P_sf}(\boldsymbol\cdot), R(\boldsymbol\cdot, \ptr_{t-s}v)\ptr_{t-s}w \rangle \r]\, \vd s \notag\\
    &\leq \e^{-tK}\E\l[\Hess_f(\ptr_tv, \ptr_tw)(X_t)\r]+\ff{\bb  (\e^{-Kt/2}-\e^{-Kt}) }K P_t|\nn f| \notag\\
&\quad +\int_{0}^{t}\e^{-K(t-s)}\E\l[\big|\Hess_{P_s f}\big|_\HS(X_{t-s})\, |\tilde{R}(\ptr_{t-s}v,\ptr_{t-s}w)|_{\HS}(X_{t-s})\r]\, \vd s.\label{est:Hess-1} 
  \end{align}
  This gives us
  \begin{align*}
   & |\Hess_{P_{t}f}|_\HS\\
&\leq 
      \E\l[\e^{-K t}|\Hess_{f}|_\HS(X_t)\r]+ \ff{n\,\bb  (\e^{-Kt/2}-\e^{-Kt}) }K P_t|\nn f|+{}\\
& +\sqrt{ \sum_{i,j}\l(\E\left[\int_{0}^{t}\e^{-K(t-s)}\l(\big|\Hess_{P_s f}\big|_\HS(X_{t-s})\, |\tilde{R}(\ptr_{t-s}e_i,\ptr_{t-s}e_j)|_{\HS}(X_{t-s})\r)\vd s\r]\r)^2 }\\
&\leq 
      \E\l[\e^{-K t}|\Hess_{f}|_\HS(X_t)\r]+ \ff{n\,\bb  (\e^{-Kt/2}-\e^{-Kt}) }K P_t|\nn f|+{}\\
& +\sqrt{ \sum_{i,j}\E\l[\int_{0}^{t}\e^{-K(t-s)}\big|\Hess_{P_s f}\big|_\HS(X_{t-s}) \,\vd s\r]\E \l[\int_{0}^{t}\e^{-K(t-s)}\l(\big|\Hess_{P_s f}\big|_\HS\, |\tilde{R}(\ptr_{t-s}e_i,\ptr_{t-s}e_j)|^2_{\HS}\r)(X_{t-s})\, \vd s\r] }\\
&\leq 
      \E\l[\e^{-K t}|\Hess_{f}|_\HS(X_t)\r]+ \ff{n\bb  (\e^{-Kt/2}-\e^{-Kt}) }K P_t|\nn f| +\|\tilde{R}\|_{\infty}\,{\E\l[\int_{0}^{t}\e^{-K(t-s)}\big|\Hess_{P_s f}\big|_\HS(X_{t-s}) \,\vd s\r]}.
  \end{align*}
  The remaining steps are similar to the first part of the proof; we skip the details.
\end{proof}

Important examples in the sequel will be Ricci parallel manifolds
which is the class of Riemannian manifolds where Ricci curvature is
constant under parallel transport, that is $\nabla \Ric=0$ for the
Levi-Civita connection $\nabla$.  Recall that an Einstein manifold is
Ricci parallel but in general the inverse is not true.

Recently F.-Y. Wang \cite{Wang19} used functional inequalities for the
semigroup to characterize constant curvature manifolds, Einstein
manifolds, and Ricci parallel manifolds.  Here we list the results
for the Hessian estimate of $P_t$ generated by the operator
$\frac12 L$ when  $M$ is a Ricci parallel manifold and $\nabla V$ is
a Killing field on $(M,g)$.  Here, a vector field $X$ on a Riemannian
manifold $(M,g)$ is called a {\it Killing field} if the local flows
generated by $X$ act by isometries i.e., for $Y,Z\in TM$,
\begin{align*}
  \nabla ^2_{Y,Z}(X)=-R(X, Y)Z.
\end{align*}
We conclude that
$\|\vd^*R+\nabla\Ric_V^{\sharp}+R(\nabla V)\|_{\infty}=0$ if
$\nabla V$ is a Killing field on a Ricci parallel manifold $(M,g)$.

\begin{corollary}\label{esti-Hess}
  Assume that $M$ is a Ricci parallel manifold, $\nabla V$ is a
  Killing field and $\|R\|_{\infty}<\infty$.  Then for any constant
  $K\in \R$,
  \begin{enumerate}[\rm(i)]
  \item if\/ $\Ric_V\geq K>0$, then for any $f\in C_b^2(M)$ and
    $t\geq 0$,
    \begin{align*}
      |\Hess_{P_tf}|_\HS^2\leq
      \frac{n(2\|R\|_{\infty}-K)}{\e^{Kt}-\e^{2(K-\|R\|_{\infty})t}} P_t|\nabla f|^2;
    \end{align*}
  \item if\/ $\Ric_V=K>0$, then for any $f\in C_b^2(M)$ and $t\geq 0$,
    \begin{align*}
      |\Hess_{P_tf}|_\HS^2\leq
      \frac{2\|\tilde{R}\|_{\infty}-K}{\e^{Kt}-\e^{2(K-\|\tilde{R}\|_{\infty})t}} P_t|\nabla f|^2.
    \end{align*}
  \end{enumerate}
\end{corollary}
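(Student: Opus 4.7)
My plan is to derive the corollary as a direct specialization of Theorem \ref{general-hessian-theorem} once the hypothesis-specific simplifications have been made. The decisive simplification is that under the assumptions of the corollary, the quantity
\[
\beta = \|\nabla \Ric_V^{\sharp} + \vd^*R + R(\nabla V)\|_{\infty}
\]
vanishes identically. This has already been recorded in the paragraph preceding the corollary: Ricci parallelism gives $\nabla \Ric^{\sharp}=0$, while the Killing equation $\nabla^2_{Y,Z}(\nabla V)=-R(\nabla V,Y)Z$ combined with $\nabla \Ric^\sharp=0$ (which via the second Bianchi identity forces $\vd^* R=0$) is enough to cancel the remaining term $R(\nabla V)$ against $\nabla \Hess_V^{\sharp}$. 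So the work here is simply to invoke this observation.

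With $\beta=0$, the first conclusion of Theorem \ref{general-hessian-theorem} collapses to
\[
|\Hess_{P_tf}|^2 \le \frac{K-2\alpha_1}{\e^{(2K-2\alpha_1)t}-\e^{Kt}}\, P_t|\nabla f|^2
 = \frac{2\|R\|_\infty - K}{\e^{Kt}-\e^{2(K-\|R\|_\infty)t}}\, P_t|\nabla f|^2,
\]
where I have rewritten the fraction by multiplying numerator and denominator by $-1$ (the sign of $2\alpha_1-K$ matches that of $\e^{Kt}-\e^{2(K-\alpha_1)t}$, so the ratio is unambiguously positive regardless of whether $K>2\|R\|_\infty$ or not). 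To pass from the operator norm $|\Hess_{P_tf}|$ to the Hilbert--Schmidt norm required in the statement, I would use the elementary bound $|T|_{\HS}^2\le n|T|^2$ valid for any symmetric $2$-tensor in dimension $n$ (summing squared eigenvalues against the maximum squared eigenvalue). This multiplies the right-hand side by $n$ and produces exactly the bound in~(i).

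For (ii), the hypothesis $\Ric_V=K$ is stronger, so the second part of Theorem \ref{general-hessian-theorem} applies with $\alpha_2=\|\tilde R\|_\infty$ in place of $\alpha_1$. With $\beta=0$ the estimate \eqref{esti-HS-ineq} simplifies directly to
\[
|\Hess_{P_tf}|_{\HS}^2 \le \frac{K-2\alpha_2}{\e^{(2K-2\alpha_2)t}-\e^{Kt}}\, P_t|\nabla f|^2,
\]
which, after the same sign-rewriting as above, is exactly the bound in~(ii). Note that no factor of $n$ appears here because the estimate~\eqref{esti-HS-ineq} already controls the Hilbert--Schmidt norm directly via $\|\tilde R\|_\infty$.

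The one substantive point that needs careful bookkeeping is the vanishing of $\beta$; everything else is extraction and squaring. I would therefore include a short lemma (or inline computation) verifying $\nabla \Hess_V^{\sharp}=-R(\nabla V)$ from the Killing identity and $\vd^* R=-\nabla \Ric^{\sharp}=0$ from the contracted Bianchi identity plus Ricci parallelism, and then simply feed $\beta=0$ into Theorem \ref{general-hessian-theorem}. The main obstacle, if any, is this algebraic identity; after that, the estimates are immediate.
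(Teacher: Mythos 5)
Your proposal is correct and follows the paper's approach: specialize Theorem~\ref{general-hessian-theorem} to $\beta=0$ (as the paper records just before the corollary) and, in case (i), use $|T|_\HS^2\le n\,|T|^2$ to pass from the operator-norm bound to the Hilbert--Schmidt bound. The only slip is writing $\vd^*R=-\nabla\Ric^\sharp$ as a literal identity --- the correct relation is the paper's displayed formula $\langle(\vd^*R)(v_1,v_2),v_3\rangle=\langle(\nabla_{v_3}\Ric^\sharp)(v_1),v_2\rangle-\langle(\nabla_{v_2}\Ric^\sharp)(v_3),v_1\rangle$ --- but your conclusion $\vd^*R=0$ under Ricci parallelism is nonetheless correct, so the argument goes through.
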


\begin{proof}
  These items are direct consequences of Theorem
  \ref{general-hessian-theorem}. The second assertion can also be
  proved by an argument as in \cite[Theorem 4.1]{Wang19} with some
  straightforward modifications.
\end{proof}

\subsection{Hessian estimate of semigroup: type II}
We now introduce a slightly different type of Hessian estimate for the
semigroup.
\begin{theorem}[Hessian estimate: type
  II]\label{general-hessian-theorem2}
  Assume that $\Ric_V\geq K>0$, $\alpha_1:=\|R\|_{\infty}<\infty$ (or
  $\alpha_2:=\|\tilde{R}\|_{\infty}<\infty$ ) and
$$\beta:=\|\nabla \Ric_V^{\sharp}+\vd^*R+R(\nabla V)\|_{\infty}<\infty.$$
Then for $f\in C_b^2(M)$,
\begin{align*}
  |\Hess _{P_tf}|\leq & \l(\frac{\e^{-Kt/2}}{\sqrt{\int_0^t\e^{Kr}\,\vd r}}+\frac{\alpha_1\e^{-Kt/2}}{\sqrt{K}}\r)(P_t|\nabla f|^2)^{1/2}+ \frac{\beta \e^{-Kt/2}}{K}(P_t|\nabla f|).
\end{align*}
Moreover, if $\Ric_V=K$, then for $f\in C_b^2(M)$,
\begin{align*}
  |\Hess_{P_tf}|_{\rm HS}\leq & \l(\frac{\e^{-Kt/2}}{\sqrt{\int_0^t\e^{Kr}\,\vd r}}
                                +\frac{\alpha_2\e^{-Kt/2}}{\sqrt{K}}\r)(P_t|\nabla f|^2)^{1/2}+ \frac{ n \beta \e^{-Kt/2}}{K}(P_t|\nabla f|).
\end{align*}
\end{theorem}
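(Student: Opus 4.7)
My plan is to avoid the Gronwall argument of Theorem \ref{general-hessian-theorem}, which naturally brings in $P_t|\Hess_f|$, and instead to reduce everything to $P_t|\nabla f|^2$ via the Bochner-Weitzenb\"ock inequality. The key trick is to apply Lemma \ref{Hess-prop-1} not with time horizon $t$ and initial function $f$, but with time horizon $s\in[0,t]$ and function $P_{t-s}f$, so that $\Hess_f$ in the probabilistic formula is replaced by $\Hess_{P_{t-s}f}$. Subsequently averaging over $s$ against an exponential weight allows me to invoke the integrated Bochner estimate
$$\int_0^t \e^{K(t-s)}P_s|\Hess_{P_{t-s}f}|_\HS^2\,\vd s\le P_t|\nabla f|^2,$$
which was already established in the proof of Theorem \ref{general-hessian-theorem}.

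Concretely, from
$$\Hess_{P_tf}(v,w)=\E\bigl[\Hess_{P_{t-s}f}(Q_s(v),Q_s(w))\bigr]+\E\langle\nabla P_{t-s}f(X_s),W_s(v,w)\rangle$$
and Lemma \ref{lem-1}, using $|Q_s|\le \e^{-Ks/2}$ (from $\Ric_V\ge K$) and the gradient contraction $|\nabla P_{t-s}f|\le \e^{-K(t-s)/2}P_{t-s}|\nabla f|$, I would derive a pointwise bound of $|\Hess_{P_tf}|$ by a sum of three terms: an $A_s$-term $\e^{-Ks}P_s|\Hess_{P_{t-s}f}|$, a $B_s$-term $\alpha_1\int_0^s \e^{-Kr}P_r|\Hess_{P_{t-r}f}|\,\vd r$, and a $C_s$-term of order $(\beta/K)\e^{-Kt/2}(1-\e^{-Ks/2})P_t|\nabla f|$. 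Since the left-hand side does not depend on $s$, multiplying this inequality by the weight $\e^{Ks}$ and integrating over $s\in[0,t]$, then dividing by $\int_0^t \e^{Ks}\,\vd s=(\e^{Kt}-1)/K$, yields the claim term by term. For the $A_s$-contribution, Cauchy-Schwarz with weight $\e^{K(t-s)}$ followed by Jensen and the integrated Bochner bound produces exactly $\e^{-Kt/2}/\sqrt{\int_0^t \e^{Kr}\,\vd r}\cdot (P_t|\nabla f|^2)^{1/2}$, with no residual factors to absorb.

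For the $B_s$-contribution, Fubini reduces the iterated integral to $(\alpha_1/K)\int_0^t(\e^{K(t-r)}-1)P_r|\Hess_{P_{t-r}f}|\,\vd r$; a second Cauchy-Schwarz with weight $\e^{-K(t-r)}$ against the integrated Bochner bound produces the integral $\int_0^t (\e^{K(t-r)}+\e^{-K(t-r)}-2)\,\vd r=2\sinh(Kt)/K-2t$, and the elementary inequality $1-Kt\le \e^{-Kt}$ dominates this by $\e^{-Kt}(\e^{Kt}-1)^2/K$, so that all prefactors conspire to leave exactly $\alpha_1\e^{-Kt/2}/\sqrt{K}$. The $C_s$-contribution is easier because $|\nabla P_{t-s}f|$ is already controlled pointwise by $\e^{-K(t-s)/2}P_{t-s}|\nabla f|$, and a direct estimate of $\int_0^t \e^{Ks}(1-\e^{-Ks/2})\,\vd s=(\e^{Kt/2}-1)^2/K$ yields at most $\beta \e^{-Kt/2}/K\cdot P_t|\nabla f|$ after division. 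The Hilbert-Schmidt case under $\Ric_V=K$ runs along identical lines, exploiting $Q_s v=\e^{-Ks/2}\ptr_s v$ and the refined trace bound $\tr\langle\Hess_{P_sf}(\boldsymbol\cdot),R(\boldsymbol\cdot,\ptr_{t-s}v)\ptr_{t-s}w\rangle\le |\Hess_{P_sf}|_\HS|\tilde R(\ptr_{t-s}v,\ptr_{t-s}w)|_\HS$ employed in the proof of Theorem \ref{general-hessian-theorem}; this replaces $\alpha_1$ by $\alpha_2=\|\tilde R\|_\infty$ and introduces the factor $n$ in the $\beta$-term. The main technical obstacle I anticipate is the bookkeeping of exponents through the two layers of Cauchy-Schwarz in the $B_s$-analysis and verifying that the elementary bound $1-Kt\le \e^{-Kt}$ is exactly what is needed to recover the clean coefficient advertised in the theorem.
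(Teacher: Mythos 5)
Your proposal is correct, and it takes a genuinely different route from the paper. The paper proves Theorem~\ref{general-hessian-theorem2} by first establishing a \emph{second} Bismut--type Hessian representation (the second Lemma~\ref{Hess-prop-1} in the paper) in which a scalar weight $k$ with $k(0)=0$, $k(t)=1$ replaces the $\Hess_f$ boundary term by the stochastic integral $-\nabla f(Q_t v)\int_0^t\langle Q_s(\dot k(s)w),\ptr_s\vd B_s\rangle$; each of the three resulting pieces is then estimated directly via It\^o isometry and Cauchy--Schwarz, and the choice $k(s)=\int_0^s\e^{Kr}\,\vd r\,/\,\int_0^t\e^{Kr}\,\vd r$ is made at the end to optimize. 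You instead stay entirely within the Type~I toolbox: you apply the original Lemma~\ref{Hess-prop-1} to $(P_{t-s}f,s)$ rather than $(f,t)$, exploit that the left-hand side $|\Hess_{P_tf}|$ is $s$-independent, and average the pointwise inequality against the weight $\e^{Ks}$ on $[0,t]$. I verified the bookkeeping: the $A_s$-contribution gives $\e^{-Kt/2}/\sqrt{\int_0^t\e^{Kr}\vd r}\cdot(P_t|\nabla f|^2)^{1/2}$ exactly as you say; after Fubini the $B_s$-contribution reduces to controlling $\int_0^t(\e^{K(t-r)}+\e^{-K(t-r)}-2)\vd r = 2\sinh(Kt)/K-2t$ by $\e^{-Kt}(\e^{Kt}-1)^2/K = (2\cosh(Kt)-2)/K$, which is equivalent to $\e^{-Kt}\ge 1-Kt$ as you anticipated and lands on $\alpha_1\e^{-Kt/2}/\sqrt K$; and the $C_s$-contribution yields $\frac{\beta}{K}\e^{-Kt/2}\frac{\e^{Kt/2}-1}{\e^{Kt/2}+1}\le\frac{\beta}{K}\e^{-Kt/2}$, slightly better than claimed. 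The HS variant under $\Ric_V=K$ goes through unchanged with $\alpha_1\mapsto\alpha_2$ and the factor $n$ on $\beta$, as in the proof of Theorem~\ref{general-hessian-theorem}. What each route buys: the paper's second Bismut formula makes the final estimate a one-shot term-by-term bound with the optimization localized in the choice of $k$, at the cost of establishing a new local-martingale identity; your route avoids any new probabilistic identity and recycles Lemma~\ref{Hess-prop-1}, Lemma~\ref{lem-1} and the integrated Bochner estimate from the proof of the Type~I theorem, at the cost of the double Cauchy--Schwarz and the elementary inequality needed to close the $B_s$-step. The constants come out identical.
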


To prove this theorem, we need the following Hessian and gradient
formula for the semigroup which is similar to \cite[Theorem
11.6]{Th2020} with the difference in the use of
$W^k(\boldsymbol\cdot, \boldsymbol\cdot)\colon TM\times TM\rightarrow M$.

\begin{lemma}\label{Hess-prop-1}
 Let $\rho$ be the Riemannian distance to a fixed point $o\in M$.
  Assume that
  \begin{align*}
    \lim_{\rho\rightarrow \infty}\frac{\log \Big(\big|\vd^*R+\nabla\Ric_V^{\sharp}+R(\nabla V)\big|+|R|\Big)}{\rho^2}=0,
  \end{align*}
  and
  \begin{align*}
    \Ric_V\geq -h(\rho)\quad \mbox{for some positive function } h\in C([0,\infty)) \mbox{ such that }\ \lim_{r\rightarrow \infty}\frac{h(r)}{r^2}=0.
  \end{align*}
  Then for $k\in C^1([0,t])$ with $k(0)=1$ and $k(t)=0$,
  \begin{align*}
    \Hess_{P_tf}(v,w)=\E^x\l[-\nabla f(Q_t(v))\int_0^t\langle Q_s(\dot{k}(s)w),\, \ptr_s \vd B_s\rangle+\langle \nabla f(X_t), W^k_t(v,w)\rangle\r],
  \end{align*}
  for $v,w\in T_xM$,  where
  \begin{align*}
  W_t^k(\boldsymbol\cdot, w):=&Q_t\int_0^tQ_r^{-1}R(\ptr_r \, \vd B_r, Q_r(\boldsymbol\cdot))Q_r(k(r) w)\\
                 &-\frac12 Q_t\int_0^tQ_r^{-1}\big(\nabla \Ric_V^{\sharp}+\vd^*R+R(\nabla V)\big)\big(Q_r(\boldsymbol\cdot), Q_r(k(r) w)\big)\,\vd r.
\end{align*}
\end{lemma}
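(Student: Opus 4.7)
The plan is to extend the martingale argument that established the first version of Lemma~\ref{Hess-prop-1} by incorporating a Bismut-type integration by parts in the scalar weight $k$. Specifically, for $s \in [0,t]$ I would introduce the process
\begin{align*}
N^k_s(v,w) := {}& k(s)\,\Hess_{P_{t-s}f}\bigl(Q_s(v), Q_s(w)\bigr)
 + \bigl\langle \nabla P_{t-s}f(X_s), W^k_s(v,w)\bigr\rangle \\
 & - \bigl\langle \nabla P_{t-s}f(X_s), Q_s(v)\bigr\rangle\, I_s,
\end{align*}
where $I_s := \int_0^s \langle Q_r(\dot k(r) w), \ptr_r \vd B_r \rangle$. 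Since $k(0)=1$, $W^k_0 = 0$ and $I_0 = 0$, one has $N^k_0 = \Hess_{P_tf}(v,w)$, whereas $k(t)=0$ shows that $N^k_t$ equals exactly the random variable whose expectation the lemma proposes as the Hessian. The strategy is therefore to show that $N^k$ is a true martingale on $[0,t]$ and take expectations at the two endpoints.

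For the local martingale property, I would run the It\^o calculation from the proof of the first version of Lemma~\ref{Hess-prop-1} with $w$ replaced at each time $r$ by the scalar multiple $k(r)w$. The cancellations that produced a local martingale there still occur, and the only surviving drift is the one caused by the time derivative of the weight, namely $\dot k(s)\,\Hess_{P_{t-s}f}(Q_s(v), Q_s(w))\,\vd s$. The role of the third term in $N^k_s$ is to cancel exactly this drift. Indeed, by Bismut's first-order formula, $\bigl\langle \nabla P_{t-s}f(X_s), Q_s(v)\bigr\rangle$ is a local martingale whose martingale part is $\Hess_{P_{t-s}f}(Q_s(v), \ptr_s \vd B_s)$, while $I_s$ is a local martingale with differential $\dot k(s)\langle Q_s(w), \ptr_s \vd B_s\rangle$. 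A short computation in a moving orthonormal frame of $T_{X_s}M$ shows that their quadratic covariation equals $\dot k(s)\,\Hess_{P_{t-s}f}(Q_s(v), Q_s(w))\,\vd s$, and the It\^o product rule produces the required cancellation with opposite sign.

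To promote $N^k$ from a local to a true martingale I would appeal to the same moment estimates used in the earlier proof. The growth hypotheses on $R$, on $\vd^*R + \nabla \Ric_V^{\sharp} + R(\nabla V)$, and on the lower bound of $\Ric_V$ guarantee via \cite[Proposition 3.1]{Wang19} that $\E[\sup_{s \in [0,t]} |Q_s|^2] < \infty$ and the analogous bound holds for $W^k_s$, as the extra scalar factor $k$ is bounded on $[0,t]$; the stochastic integral $I_s$ is square-integrable for the same reason, since $\dot k$ is continuous on $[0,t]$. Together with the uniform local Bismut bounds on $|\nabla P_{t-s}f|$ and $|\Hess_{P_{t-s}f}|$ on $[0, t-\varepsilon] \times M$, this yields uniform integrability of $N^k_s$ on $[0,t-\varepsilon]$, and letting $\varepsilon \downarrow 0$ gives $\E[N^k_0] = \E[N^k_t]$, which is the asserted identity. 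The main technical obstacle is the covariation calculation: one needs to verify in the moving frame that the indices contract precisely to $\Hess_{P_{t-s}f}(Q_s(v), Q_s(w))$ so that the $\dot k$ drift is eliminated with the correct sign. Once this is in place, the remaining steps are a direct transcription of the argument for the first version of Lemma~\ref{Hess-prop-1}.
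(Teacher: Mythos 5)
Your proposal is correct and follows essentially the same route as the paper: you construct the same process as the paper's local martingale $M_t$ (since $k(s)\,\Hess_{P_{t-s}f}(Q_s v, Q_s w) = \Hess_{P_{t-s}f}(Q_s v, Q_s(k(s)w))$ and $\langle \nabla P_{t-s}f, Q_s v\rangle I_s = \vd P_{t-s}f(Q_t v)\int_0^s\langle Q_r(\dot k(r) w), \ptr_r\vd B_r\rangle$), and the key step — using the It\^o product rule to show that the quadratic covariation of $\vd P_{t-s}f(Q_s v)$ and $I_s$ cancels the $\dot k$ drift — is precisely the mechanism the paper invokes via its formula for $\vd P_{T-t}f(Q_t(v))$. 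The promotion to a true martingale and the limit $\varepsilon\downarrow 0$ also match the paper's argument.
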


\begin{proof}
  Fixed $T>0$, set
  \begin{align*}
    N_t(v,w):= \Hess_{P_{T-t}f}(Q_t(v), Q_t(w))+\langle\nabla P_{T-t}f(X_t), W_t(v,w)\rangle.
  \end{align*}
 Furthermore, define
$$N^k_t(v,w)=\Hess_{P_{T-t}f}(Q_t(v), Q_t(k(t)w))+(\vd P_{T-t}f)(W^k_t(v,w)).$$ 
According to the definition of $W^k_t(v,w)$, resp.~$W_t(v,w)$, and in
view of the fact that $N_t(v,w)$ is a local martingale, it is easy to
see that
\begin{align}\label{local-M1}
  N_t^k(v,w)&-\int_0^t(\Hess _{ P_{T-s}f})(Q_s(
              v), Q_s(\dot{k}(s) w))\,\vd s 
\end{align}
is a local martingale. From the formula
\begin{align*}
  \vd P_{T-t}f(Q_t(v))=\vd P_{T}f(v)+\int_0^t(\Hess _{ P_{T-s}f})
  (\ptr_s \vd  B_s, Q_s(v)),
\end{align*}
it follows that
\begin{align}\label{local-M3}
 &\int_0^t(\Hess _{ P_{T-s}f})(Q_s(v), Q_s(\dot{k}(s) w))
    \,\vd s-\vd P_{T-t}f(Q_t(v))\int_0^t\langle Q_s(\dot{k}(s)w),
    \ptr_s\vd  B_s\rangle 
\end{align}
is also a local martingale. Concerning the last term in
\eqref{local-M3}, we note that
\begin{align*}
  M_t:=\Hess _{ P_{T-t}f}(Q_t(v), Q_t(k(t)w))+(\vd P_{T-t}f)(W^k_t(v,w))-\vd P_{T-t}f(Q_t(v))\int_0^t\langle Q_s(\dot{k}(s)w),
    \ptr_s\vd  B_s\rangle 
\end{align*}
is a local martingale as well. 
  As explained in the proof of Theorem \ref{general-hessian-theorem},  the local martingale $M_t$ is a true martingale on the time interval
  $[0,T-\varepsilon]$. By taking expectations, we first obtain $\E[M_0]=\E[M_{T-\varepsilon}]$
  and then
  \begin{equation*}
    \Hess_{P_Tf}(v,w)=\E\l[-\vd f(Q_T(v))\int_0^T\langle Q_s(\dot{k}(s)w),
    \ptr_s\vd  B_s\rangle+\vd f( W_T^k(v,w))\r]
  \end{equation*}
 by passing to the limit as $\varepsilon\downarrow0$.  
\end{proof}

\begin{proof}[Proof of Theorem \ref{general-hessian-theorem2}]

As
 $\alpha_1:=\|R\|_{\infty}<\infty$, $\Ric_Z\geq K$
  for some constants $K$ and
\begin{align*}
\beta:=\|{\bf d} ^*R+\nabla \Ric_Z^{\sharp}-R(Z)\|_{\infty}<\infty,
\end{align*}
then for all $t>0$, 

\begin{align*}
&\E\l[\vd f(Q_t(v))\int_0^t\langle Q_s(\dot{k}(s)w),
    \ptr_s\vd  B_s\rangle \r] \\
& \leq \e^{-\frac{K}{2}t} ( P_t|\nabla f|^2)^{1/2}  \l(\int_0^t\e^{-Ks}\dot{k}(s)^2\,\vd s\r)^{1/2}, \\
&\E\l[\vd f \l(Q_t\int_0^tQ_r^{-1}R(\ptr_r\,\vd B_r, Q_r(k(r)w))Q_r(v)\r)\r] \\
&\leq \alpha_1  \e^{-\frac{K t}{2}} (P_t|\nabla f|^2)^{1/2}\l(\int_0^t\e^{-Ks} k(s)^2\, \vd s\r)^{1/2},
\end{align*}
and 
\begin{align*}
&\frac{1}{2}\E\l[\vd f \l( Q_t\int_0^tQ_r^{-1}\big(\nabla \Ric_V^{\sharp}+\vd^*R+R(\nabla V)\big)\big(Q_r(k(r)w), Q_r(v)\big)\,\vd r \r)\r]  \\
&\leq \frac{\beta }{2}\e^{-{K t}/{2}} \l(\int_0^t\e^{-{Ks}/{2}}k(s)\, \vd s\r) (P_t|\nabla f|). 
\end{align*}

  Keeping the assumptions of Lemma \ref{Hess-prop-1}, we have
\begin{align*}
    |\Hess _{P_tf}|&\leq  \e^{-{Kt}/{2}}(P_t|\nabla f|^2)^{1/2}\l[\l(\int_0^t\e^{-Ks}\dot{k}(s)^2\,\vd s\r)^{1/2}+\alpha_1 \l(\int_0^t\e^{-Ks}k(s)^2\,\vd s\r)^{1/2}\r]\\
    &\quad+ \frac{\beta}{2}\e^{-{Kt}/{2}} (P_t|\nabla f|) \l(\int_0^t\e^{-{Ks}/{2}}k(s)\,\vd s\r).
\end{align*}
Choose  the  function
\begin{align*}
k(s):=\frac{\int_0^s\e^{Kr}\, \vd r}{\int_0^t\e^{Kr}\, \vd r}.
\end{align*}
Then we obtain
\begin{align*}
|\Hess _{P_tf}|\leq & \l(1+\frac{\alpha_1}{\sqrt{K}}\sqrt{\int_0^t\e^{Kr}\, \vd r}\r)\frac{\e^{-Kt/2}}{\sqrt{\int_0^t\e^{Kr}\, \vd r}}(P_t|\nabla f|^2)^{1/2}+ \frac{\beta }{K}\e^{-\frac{K}{2}t}(P_t|\nabla f|).
\end{align*}
\end{proof}

\begin{corollary}\label{esti-Hess}
  Assume that  $\alpha_1:=\|R\|_{\infty}<\infty$ (or $\alpha_2:=\|\tilde R\|_{\infty}<\infty$) and $\beta:=\|\nabla \Ric_V^{\sharp}+\vd^*R+R(\nabla V)\|_{\infty}<\infty$.  For any constant
  $K\in \R$,
  \begin{enumerate}[\rm(i)]
  \item if\/ $\Ric_V\geq K>0$, then for any $f\in C_b^2(M)$ and
    $t> 0$,
    \begin{align*}
      |\Hess_{P_tf}|_\HS^2\leq
    n \l(1+\l(\frac{\alpha_1}{\sqrt{K}}+\frac{\beta }{K}\r)\sqrt{\int_0^t\e^{Kr}\, \vd r}\r)^2\frac{\e^{-Kt}}{\int_0^t\e^{Kr}\, \vd r} P_t|\nabla f|^2;
    \end{align*}
  \item if\/ $\Ric_V=K>0$, 
 then for any $f\in C_b^2(M)$ and $t> 0$,
\begin{align*}
      |\Hess_{P_tf}|_\HS^2\leq
      \l(1+\l(\frac{\alpha_2}{\sqrt{K}}+\frac{\beta \, n}{K}\r)\sqrt{\int_0^t\e^{Kr}\, \vd r}\r)^2\frac{\e^{-Kt}}{\int_0^t\e^{Kr}\, \vd r} P_t|\nabla f|^2.
    \end{align*}
  \end{enumerate}
\end{corollary}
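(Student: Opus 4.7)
The plan is to deduce both estimates directly from Theorem \ref{general-hessian-theorem2} by two elementary manipulations: (a) replacing $P_t|\nabla f|$ with $(P_t|\nabla f|^2)^{1/2}$ via Jensen's inequality, and (b) using the standard comparison between the Hilbert--Schmidt norm and the operator norm of a symmetric $n\times n$ tensor.

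For part (i), I would start from the first estimate in Theorem \ref{general-hessian-theorem2}, namely
\begin{align*}
|\Hess_{P_tf}|\le \Bigl(\tfrac{\e^{-Kt/2}}{\sqrt{\int_0^t\e^{Kr}\,\vd r}}+\tfrac{\alpha_1\e^{-Kt/2}}{\sqrt K}\Bigr)(P_t|\nabla f|^2)^{1/2}+\tfrac{\beta\e^{-Kt/2}}{K}(P_t|\nabla f|).
\end{align*}
Since $P_t|\nabla f|\le (P_t|\nabla f|^2)^{1/2}$ by Jensen, the right-hand side is at most
\begin{align*}
\tfrac{\e^{-Kt/2}}{\sqrt{\int_0^t\e^{Kr}\,\vd r}}\Bigl(1+\bigl(\tfrac{\alpha_1}{\sqrt K}+\tfrac{\beta}{K}\bigr)\sqrt{\textstyle\int_0^t\e^{Kr}\,\vd r}\Bigr)(P_t|\nabla f|^2)^{1/2}
\end{align*}
after factoring. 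Squaring and then invoking the pointwise inequality $|\Hess_{P_tf}|_{\HS}^2\le n\,|\Hess_{P_tf}|^2$, valid for any symmetric bilinear form in dimension $n$, yields the claimed bound.

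For part (ii), when $\Ric_V=K$, I would use the sharper Hilbert--Schmidt bound of Theorem \ref{general-hessian-theorem2} directly:
\begin{align*}
|\Hess_{P_tf}|_{\HS}\le \Bigl(\tfrac{\e^{-Kt/2}}{\sqrt{\int_0^t\e^{Kr}\,\vd r}}+\tfrac{\alpha_2\e^{-Kt/2}}{\sqrt K}\Bigr)(P_t|\nabla f|^2)^{1/2}+\tfrac{n\beta\e^{-Kt/2}}{K}(P_t|\nabla f|).
\end{align*}
Again applying Jensen to the last term, factoring $\e^{-Kt/2}/\sqrt{\int_0^t\e^{Kr}\,\vd r}$, and squaring gives the stated estimate with $\alpha_2$ and $n\beta/K$ in place of $\alpha_1$ and $\beta/K$.

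There is essentially no obstacle here: the corollary is a rearrangement of the two inequalities in Theorem \ref{general-hessian-theorem2}. The only small point to verify carefully is the factor $n$ in case (i), which comes from comparing the operator norm with the Hilbert--Schmidt norm for a symmetric $n$-tensor, and the fact that the assumption is stated as ``$\alpha_1<\infty$ or $\alpha_2<\infty$'' because the two parts use the two different bounds available in Theorem \ref{general-hessian-theorem2}. No additional probabilistic input is needed.
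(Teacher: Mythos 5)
Your proof is correct and is precisely what the paper means by calling the corollary a ``direct consequence'' of the type-II Hessian estimate: absorb the $P_t|\nabla f|$ term via Jensen ($P_t|\nabla f|\le(P_t|\nabla f|^2)^{1/2}$), factor out $\e^{-Kt/2}/\sqrt{\smash{\int_0^t\e^{Kr}\,\vd r}}$, square, and in part (i) pass from operator norm to Hilbert--Schmidt norm via $|\Hess_{P_tf}|_{\HS}^2\le n\,|\Hess_{P_tf}|^2$. (The paper's proof text cites Theorem~\ref{general-hessian-theorem}, but as you correctly observed, the formulas clearly come from Theorem~\ref{general-hessian-theorem2}; this is evidently a typo or copy-paste slip in the source.)
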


\begin{proof}
  These items are  direct consequences of Theorem
  \ref{general-hessian-theorem}. The second assertion
  can also be proved by an argument as in \cite[Theorem
  4.1]{Wang19} with some straightforward modifications. 
\end{proof}

In Theorem \ref{general-hessian-theorem2},  $|\nabla \Ric_V^{\sharp}+\vd^*R+R(\nabla V)|$ is assumed to be uniformly bounded on the whole space.  
We will relax this condition by regarding $|\nabla \Ric_V^{\sharp}+\vd^*R+R(\nabla V)|(x)$ as  a space dependent function with appropriate conditions. Let
\begin{align}
&\beta(x)=|\nabla \Ric_V^{\sharp}+\vd^*R+R(\nabla V)|(x);  \label{beta}\\
& K_V(x):=\inf\{\Ric_V(v,v)(x):\, v\in T_xM  \}. \label{KV}
\end{align}

\begin{theorem}\label{general-hessian-theorem3}
  Assume that there exist $K>0$, $p>1$ and $\delta>0$ such that   $K_V(x)- \frac{2(p-1)}{p}(\delta \beta(x))^{\frac{p}{p-1}}-K\geq 0$ for all $x\in M$.  Let $\alpha_1:=\|R\|_{\infty}<\infty$. Then for $f\in C_b^2(M)$,
 \begin{align*}
      |\Hess_{P_tf}|\leq
  \l(1+\frac{\alpha_1}{\sqrt{K}}\sqrt{\int_0^t\e^{Kr}\, \vd r}\r)\frac{\e^{-Kt/2}}{\sqrt{\int_0^t\e^{Kr}\, \vd r}}(P_t|\nabla f|^2)^{1/2}+  \frac{1}{\delta2^{(p-1)/p}(pK)^{1/p}} \e^{-\frac{K}{2}t} P_t|\nabla f|.
    \end{align*}
\end{theorem}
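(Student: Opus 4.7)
My plan is to closely follow the proof of Theorem~\ref{general-hessian-theorem2}, keeping the same probabilistic representation of $\Hess_{P_tf}$ and the same choice $k(s)=(\int_0^s\e^{Kr}\,\vd r)\big/(\int_0^t\e^{Kr}\,\vd r)$, and only redoing the bound on the $\beta$-dependent term so as to exploit the pointwise hypothesis in place of a uniform bound on $\beta$. For unit $v,w\in T_xM$ the second Hessian formula of Lemma~\ref{Hess-prop-1} decomposes $\Hess_{P_tf}(v,w)$ into a noise term, an $R$-term and a $\beta$-term. Since the hypothesis forces $K_V(x)\ge K+\tfrac{2}{q}(\delta\beta(x))^{q}\ge K$ with $q:=p/(p-1)$, and in particular $\Ric_V\ge K$, the noise and $R$-term estimates of Theorem~\ref{general-hessian-theorem2} apply verbatim and already reproduce the first summand $\bigl(1+\alpha_1 K^{-1/2}\sqrt{\int_0^t\e^{Kr}\,\vd r}\bigr)\e^{-Kt/2}\big/\sqrt{\int_0^t\e^{Kr}\,\vd r}\cdot(P_t|\nabla f|^{2})^{1/2}$ of the target inequality.

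The heart of the argument is the new bound on the $\beta$-term. For this I would upgrade the damped transport estimates from their deterministic form $|Q_s|^{2}\le\e^{-Ks}$ to a pathwise version obtained by integrating $\tfrac{\vd}{\vd s}|Q_sv|^{2}=-\Ric_V(Q_sv,Q_sv)$ against the pointwise hypothesis: setting $G(r):=\int_0^{r}(\delta\beta(X_u))^{q}\,\vd u$, one obtains
\begin{equation*}
|Q_s|^{2}\le\e^{-Ks}\e^{-\tfrac{2}{q}G(s)},\qquad |Q_tQ_r^{-1}|\le \e^{-K(t-r)/2}\e^{-\tfrac{1}{q}(G(t)-G(r))}.
\end{equation*}
Multiplying these and dropping the harmless factor $\e^{-G(t)/q}\le 1$ reduces the $\beta$-contribution to $\tfrac12\,\e^{-Kt/2}\,\E\bigl[|\nabla f|(X_t)\int_0^t\e^{-Kr/2}\e^{-G(r)/q}\beta(X_r)k(r)\,\vd r\bigr]$.

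The decisive step is then a pathwise H\"older inequality with conjugate exponents $q,p$:
\begin{equation*}
\int_0^t\e^{-Kr/2}k(r)\bigl[\e^{-G(r)/q}\beta(X_r)\bigr]\vd r\le\Bigl(\int_0^t\e^{-Kpr/2}k(r)^{p}\,\vd r\Bigr)^{1/p}\Bigl(\int_0^t\e^{-G(r)}\beta(X_r)^{q}\,\vd r\Bigr)^{1/q}.
\end{equation*}
The key algebraic identity is $\beta(X_r)^{q}=G'(r)/\delta^{q}$, whence the second factor equals $\delta^{-1}(1-\e^{-G(t)})^{1/q}\le\delta^{-1}$, a \emph{deterministic} bound independent of the process. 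Bounding $k(r)\le 1$ yields $(\int_0^t\e^{-Kpr/2}\,\vd r)^{1/p}\le (2/(pK))^{1/p}$ for the first factor, and multiplying all the constants produces the $\beta$-contribution $\tfrac{1}{\delta\,2^{(p-1)/p}(pK)^{1/p}}\,\e^{-Kt/2}\,P_t|\nabla f|$. The main obstacle is spotting this Young/H\"older pairing: the exponential $\e^{-G/q}$ arising from the improved Ricci lower bound combines with $\beta^{q}=G'/\delta^{q}$ so that $\int_0^t\e^{-G}G'\,\vd r\le 1$ telescopes to a universal constant, which is precisely what removes all path-dependence and all remaining $\beta$-dependence from the final estimate.
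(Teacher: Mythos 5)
Your proof is correct and arrives at exactly the stated constant, but it takes a genuinely different route from the paper's own argument. The paper isolates the same $\beta$-term
$U_t := \bigl|Q_t\int_0^tQ_r^{-1}\bigl(\nabla \Ric_V^{\sharp}+\vd^*R+R(\nabla V)\bigr)\bigl(Q_r(w), Q_r(k(r) w)\bigr)\,\vd r\bigr|$
but then applies It\^o's formula to $U_t^{\,p}$, uses the $\Ric_V$-damping in the $Q_t$-prefactor to produce the drift $-\tfrac{p}{2}K_V(X_t)U_t^{\,p}$, and applies Young's inequality \emph{pointwise in the drift} to the cross term $p\,\beta(X_t)\,|Q_t|^2 k(t)\,U_t^{\,p-1}$. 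The pointwise hypothesis then turns the coefficient of $U_t^{\,p}$ into $-\tfrac{p}{2}K$, and a Gronwall/integrating-factor step together with $|Q_s|^{2p}\le\e^{-pKs}$ gives $U_t\le\delta^{-1}(2/(pK))^{1/p}\e^{-Kt/2}$. You instead push the pointwise hypothesis into a pathwise refinement of the damped-transport bound, $|Q_s|^2\le\e^{-Ks}\e^{-2G(s)/q}$, and then apply H\"older with conjugate exponents $(p,q)$ directly to the time integral, exploiting the telescoping identity $\int_0^t\e^{-G}\,\vd G=1-\e^{-G(t)}\le1$ to remove all path-dependence. Both arguments rest on the same $(p,q)$-duality and produce identical constants; the paper's version packages the duality as Young on the It\^o drift of $U_t^{\,p}$, while yours packages it as H\"older on the $\vd r$-integral. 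Your version has the modest advantage of never invoking $|U_t|^{p-2}$, so it sidesteps the mild non-smoothness of $|\cdot|^p$ at the origin when $1<p<2$; the paper's version keeps the estimate entirely inside the It\^o/Gronwall framework already set up for Theorem~\ref{general-hessian-theorem2}.
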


\begin{proof} 
It is easy to see from the condition that $K_V(x)\geq K>0$, i.e. $\Ric_V\geq K>0$. Following the steps of the proof of Theorem
\ref{general-hessian-theorem2}, it suffices to estimate 
$$\l|Q_t\int_0^tQ_r^{-1}\big(\nabla \Ric_V^{\sharp}+\vd^*R+R(\nabla V)\big)\big(Q_r(w), Q_r(k(r) w)\big)\,\vd r\r|.$$
For $p>1$,  by It\^{o}' s formula,
\begin{align*}
& \vd \l|Q_t\int_0^tQ_r^{-1}\big(\nabla \Ric_V^{\sharp}+\vd^*R+R(\nabla V)\big)\big(Q_r(w), Q_r(k(r) w)\big)\,\vd r\r|^{p} \\
&=- \frac{p}{2}\l|Q_t\int_0^tQ_r^{-1}\big(\nabla \Ric_V^{\sharp}+\vd^*R+R(\nabla V)\big)\big(Q_r(w), Q_r(k(r) w)\big)\,\vd r\r|^{p-2} \\
& \quad  \times  \Ric_V\bigg(Q_t\int_0^tQ_r^{-1}\big(\nabla \Ric_V^{\sharp}+\vd^*R+R(\nabla V)\big)\big(Q_r(w), Q_r(k(r) w)\big)\,\vd r, \\
&\qquad \qquad  Q_t\int_0^tQ_r^{-1}\big(\nabla \Ric_V^{\sharp}+\vd^*R+R(\nabla V)\big)\big(Q_r(w), Q_r(k(r) w)\big)\,\vd r\bigg)\, \vd t \\
&\quad + p\l|Q_t\int_0^tQ_r^{-1}\big(\nabla \Ric_V^{\sharp}+\vd^*R+R(\nabla V)\big)\big(Q_r(w), Q_r(k(r) w)\big)\,\vd r\r|^{p-2} \\
&\qquad \times \Big \langle \big(\nabla \Ric_V^{\sharp}+\vd^*R+R(\nabla V)\big)\big(Q_t(w), Q_t(k(t) w)\big), \\
&\qquad  \qquad Q_t\int_0^tQ_r^{-1}\big(\nabla \Ric_V^{\sharp}+\vd^*R+R(\nabla V)\big)\big(Q_r(w), Q_r(k(r) w)\big)\,\vd r \Big \rangle \, \vd t\\
&\leq -\frac{p}{2}K_V(X_t) \l|Q_t\int_0^tQ_r^{-1}\big(\nabla \Ric_V^{\sharp}+\vd^*R+R(\nabla V)\big)\big(Q_r(w), Q_r(k(r) w)\big)\,\vd r\r|^p\, \vd t\\
&\qquad + p\beta(X_t) |Q_t|^2k(t)\l|Q_t\int_0^tQ_r^{-1}\big(\nabla \Ric_V^{\sharp}+\vd^*R+R(\nabla V)\big)\big(Q_r(w), Q_r(k(r) w)\big)\,\vd r\r|^{p-1} \, \vd t.
\end{align*}
Using Young's inequality, we further obtain
\begin{align*}
& \vd \l|Q_t\int_0^tQ_r^{-1}\big(\nabla \Ric_V^{\sharp}+\vd^*R+R(\nabla V)\big)\big(Q_r(w), Q_r(k(r) w)\big)\,\vd r\r|^{p} \\
&\leq  \Big[ (p-1)(\delta \beta(X_t))^{\frac{p}{p-1}}-\frac{p}{2}K(X_t)\Big]\l|Q_t\int_0^tQ_r^{-1}\big(\nabla \Ric_V^{\sharp}+\vd^*R+R(\nabla V)\big)\big(Q_r(w), Q_r(k(r) w)\big)\,\vd r\r|^p\, \vd t \\
&\qquad \quad  +\frac{1}{\delta^{p}} |Q_t(w)|^{2p}\, \vd t \\
&\leq -\frac{p}{2}K\l|Q_t\int_0^tQ_r^{-1}\big(\nabla \Ric_V^{\sharp}+\vd^*R+R(\nabla V)\big)\big(Q_r(w), Q_r(k(r) w)\big)\,\vd r\r|^p\, \vd t +\frac{1}{\delta^{p}} |Q_t(w)|^{2p}\, \vd t,
\end{align*}
which further implies 
\begin{align*}
&\l|Q_{t\wedge \tau_D}\int_0^{t\wedge \tau_D}Q_r^{-1}\big(\nabla \Ric_V^{\sharp}+\vd^*R+R(\nabla V)\big)\big(Q_r(w), Q_r(k(r) w)\big)\,\vd r\r|\\
&\leq \frac{1}{\delta} \e^{-\frac{1}{2}K (t\wedge \tau_D)}\l(\int_0^{t\wedge \tau_D} \e^{\frac{p}{2}Ks}|Q_s(w)|^{2p}\, \vd s\r)^{1/p}\leq \frac{1}{\delta} \l( \frac{2}{ p K}\r)^{1/p}\e^{-\frac{1}{2}K(t\wedge \tau_D)},
\end{align*}
where $\tau_D$ is the first exit time of the compact set $D\subset M$.  Letting $D$ increase to $M$ yields
\begin{align*}
&\l|Q_{t}\int_0^{t}Q_r^{-1}\big(\nabla \Ric_V^{\sharp}+\vd^*R+R(\nabla V)\big)\big(Q_r(w), Q_r(k(r) w)\big)\,\vd r\r|  \leq \frac{1}{\delta} \l( \frac{2}{p K} \r)^{1/p} \e^{-\frac{1}{2}Kt}.
\end{align*}
\end{proof}


\section{The HSI inequality}

We first recall the formula relating relative entropy and Fisher information.
From now on, we always assume that $\nu$ is a distribution
which is absolutely continuous with respect to $\mu$ such that
$h:={\vd \nu}/{\vd \mu}\in C_b^2(M)$.

\begin{proposition}\label{auxi-prop}
  Assume that
$$\Ric_V:=\Ric-\Hess_V\geq K$$
for some positive constant $K$. Recall that
$\vd \nu^t=P_th\, \vd\mu$ for $t>0$. Then
\begin{enumerate}[\rm(i)]
\item (Integrated de Bruijn's formula)
  \begin{align*}
    H(\nu\,|\,\mu)=\Ent_{\mu}(h)=\frac12\int_0^{\infty}I_{\mu}(P_th)\, \vd t;
  \end{align*}
\item (Exponential decay of Fisher information) for every $t\geq0$,
  \begin{align*}
    I_{\mu}(P_th)=I(\nu^t\,|\, \mu)\leq \e^{-Kt}I(\nu\,|\, \mu)=\e^{-Kt}I_{\mu}(h).
  \end{align*}
\end{enumerate}
\end{proposition}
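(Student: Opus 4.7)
The plan is to treat the two items separately, both relying on the evolution equation $\partial_t P_t h = \tfrac{1}{2}L P_t h$, the symmetry of $L$ on $L^2(\mu)$, and the invariance $\int L F\,\vd\mu = 0$.

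For part (i), I would differentiate the entropy along the flow. Writing $u_t:=P_t h$ and using symmetry together with $\int L u_t\,\vd\mu = 0$, one obtains
\begin{align*}
\frac{\vd}{\vd t}\Ent_\mu(u_t)
&= \frac{1}{2}\int_M \log u_t \cdot L u_t\,\vd\mu
= -\frac{1}{2}\int_M \frac{|\nabla u_t|^2}{u_t}\,\vd\mu
= -\frac{1}{2}\,I_\mu(u_t).
\end{align*}
Integrating from $0$ to $\infty$ and using $\Ent_\mu(u_t)\to 0$ as $t\to\infty$ — which follows from the exponential convergence to equilibrium implied by $\Ric_V\ge K>0$ (combine the above identity with the Bakry–Émery log-Sobolev inequality to get $\Ent_\mu(u_t)\le\e^{-Kt}\Ent_\mu(h)$) — yields the integrated de Bruijn formula $H(\nu\,|\,\mu)=\tfrac12\int_0^\infty I_\mu(P_t h)\,\vd t$. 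The integration by parts and the limiting step are justified by $h\in C_b^2(M)$ and standard parabolic smoothing for $P_t h$.

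For part (ii), I would exploit the $L^1$-gradient estimate which, under $\Ric_V\ge K$, reads
$$|\nabla P_t h|(x)\le \e^{-Kt/2}\,P_t|\nabla h|(x),\quad x\in M.$$
Combined with the pointwise Cauchy–Schwarz estimate $(P_t f)^2\le (P_t g)\cdot P_t(f^2/g)$ valid for $g>0$, applied with $f=|\nabla h|$ and $g=h$, this gives
$$\frac{|\nabla P_t h|^2}{P_t h}\le \e^{-Kt}\,\frac{(P_t|\nabla h|)^2}{P_t h}\le \e^{-Kt}\,P_t\!\l(\frac{|\nabla h|^2}{h}\r).$$
Integrating against $\mu$ and using the invariance $\int P_t F\,\vd\mu=\int F\,\vd\mu$ produces $I(\nu^t\,|\,\mu)\le \e^{-Kt}\,I(\nu\,|\,\mu)$.

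The step I expect to be most delicate is the ergodic decay $\Ent_\mu(P_t h)\to 0$ and the careful handling of the integration by parts in the non-compact setting; on a compact manifold these would be routine, while here the strict positivity $K>0$ together with the standard Bakry–Émery consequences (log-Sobolev inequality, exponential convergence to equilibrium) provides exactly what is required. A minor but essential ingredient is having the strong $L^1$ gradient estimate (rather than only the weaker $L^2$-form $|\nabla P_t h|^2\le \e^{-Kt}P_t|\nabla h|^2$); this is a classical consequence of $\Ric_V\ge K$ for symmetric diffusion semigroups on complete Riemannian manifolds.
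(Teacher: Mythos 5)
The paper states this proposition without proof, treating it as a classical Bakry--\'Emery fact. Your argument is correct and is precisely the standard derivation: for (i), the de Bruijn identity $\tfrac{\vd}{\vd t}\Ent_\mu(P_t h)=-\tfrac12 I_\mu(P_t h)$ obtained from symmetry, invariance, and integration by parts, with the boundary term at infinity killed by the exponential entropy decay that $\Ric_V\ge K>0$ guarantees via the log-Sobolev inequality and Gr\"onwall; for (ii), the $L^1$-gradient estimate $|\nabla P_t h|\le \e^{-Kt/2}P_t|\nabla h|$ combined with the kernel Cauchy--Schwarz bound $(P_t|\nabla h|)^2\le (P_t h)\,P_t(|\nabla h|^2/h)$ and invariance of $\mu$. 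One small remark: the proposition as printed reads $\Ric_V:=\Ric-\Hess_V$, but this is a sign typo inconsistent with the paper's own definition $\Ric_V=\Ric+\Hess_V$ in the introduction; your proof correctly uses only the hypothesis $\Ric_V\ge K$ and so is unaffected.
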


The HSI inequality connects the entropy $H$, the Stein
discrepancy $S$ and the Fisher information~$I$. We first give a bound
for the Fisher information by Stein's discrepancy $S$.
More precisely, we have the following result.

\begin{theorem}\label{esti-entropy}
Let $\nu$ be a distribution satisfying $\vd \nu=h\, \vd \mu$. 
 Assume that  $\alpha_1:=\|R\|_{\infty}<\infty$ (or $\alpha_2:=\|\tilde{R}\|_{\infty}<\infty$) and
$$\beta:=\|\nabla \Ric_V^{\sharp}+\vd^*R+R(\nabla V)\|_{\infty}<\infty.$$
\begin{enumerate}[\rm(i)]
  \item If\/ $\Ric_V\geq K,$ then for $t>0$ and $f\in C_b^2(M)$,
 \begin{align}\label{IS-formula-i}
  I_{\mu}(P_th)\leq \Psi(t)\, S^2(\nu\,|\, \mu),\quad  t>0,
\end{align}
where
$
\Psi(t)= \min\l\{\Psi_1(t),\, \Psi_2(t)\r\}
$ 
and
\begin{align*}
& \Psi_1(t):=  \frac{Kn}{\e^{2Kt}-\e^{Kt}} \l(1+\big(\frac{\alpha_1}{\sqrt{K}}+\frac{\beta }{K}\big)\l(\frac{\e^{Kt}-1}{K}\r)^{1/2}\r)^2;\\
& \Psi_2(t):= \frac{(K-2\alpha_1)n}{\e^{(2K-2\alpha_1)t}-\e^{Kt}} \l(1+\frac{\beta }{K}\l(\frac{\e^{Kt}-1}{K}\r)^{1/2}\r)^2.
\end{align*}
 \item 
If $\Ric_V=K>0$, then $\Psi$ in \eqref{IS-formula-i} also can be chosen as 
\begin{align}
&\min\l\{\tilde{\Psi}_1(t),\, \tilde{\Psi}_2(t)\r\},\ \    \mbox{where} \qquad \label{Psi-2}\\
& \tilde{\Psi}_1(t):=  \frac{K}{\e^{2Kt}-\e^{Kt}} \l(1+\big(\frac{\alpha_2}{\sqrt{K}}+\frac{\beta n }{K}\big)\l(\frac{\e^{Kt}-1}{K}\r)^{1/2}\r)^2;\notag\\
& \tilde{\Psi}_2(t):= \frac{K-2\alpha_2}{\e^{(2K-2\alpha_2)t}-\e^{Kt}} \l(1+\frac{\beta n }{K}\l(\frac{\e^{Kt}-1}{K}\r)^{1/2}\r)^2. \notag
\end{align}
  \end{enumerate}


\end{theorem}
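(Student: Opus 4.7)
The strategy is to derive a Stein-type integration by parts identity expressing $\int\langle\nabla P_th,\nabla g\rangle\,\vd\mu$ in terms of $\Hess_{P_tg}$ paired against the defect $\tau-\id$ of the Stein kernel, to substitute $g=\log P_th$ so that $I_\mu(P_th)$ appears on the left-hand side, and to close with Cauchy--Schwarz combined with the Hessian estimates of Section~\ref{section-hessian}.

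For $g\in C_0^\infty(M)$, $L^2(\mu)$-symmetry of $P_t$ gives
\[
\int\langle\nabla P_th,\nabla g\rangle\,\vd\mu=\int\langle\nabla h,\nabla P_tg\rangle\,\vd\mu,
\]
while integration by parts against $\mu=\e^{-V}\vol$ yields
$\int\langle\nabla h,\nabla P_tg\rangle\,\vd\mu=\int h\bigl(\langle\nabla V,\nabla P_tg\rangle-\Delta P_tg\bigr)\vd\mu$. The defining property of $\tau\in\S_\nu$ converts the first summand into $\int\langle\tau,\Hess_{P_tg}\rangle_\HS\vd\nu$; since $\Delta P_tg=\langle\id,\Hess_{P_tg}\rangle_\HS$, the second becomes $\int\langle\id,\Hess_{P_tg}\rangle_\HS\vd\nu$. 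Subtracting produces the key identity
\[
\int\langle\nabla P_th,\nabla g\rangle\,\vd\mu=\int\langle\tau-\id,\Hess_{P_tg}\rangle_\HS\,\vd\nu.
\]
A standard truncation/approximation argument (based on smoothness and strict positivity of $P_th$) extends this identity to $g=\log P_th$, in which case the left-hand side equals $I_\mu(P_th)$. Cauchy--Schwarz in the HS inner product, followed by taking the infimum over $\tau\in\S_\nu$, then gives
\[
I_\mu(P_th)\le S(\nu\,|\,\mu)\,\Bigl(\int|\Hess_{P_tg}|_\HS^2\,\vd\nu\Bigr)^{1/2}.
\]

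To close the self-improving bound, we insert the Hessian estimates. Both Theorem~\ref{general-hessian-theorem} and Theorem~\ref{general-hessian-theorem2} yield pointwise bounds of the shape $|\Hess_{P_tg}|\le A(t)(P_t|\nabla g|^2)^{1/2}+B(t)P_t|\nabla g|$; Jensen's inequality $P_t|\nabla g|\le(P_t|\nabla g|^2)^{1/2}$ then consolidates these into $|\Hess_{P_tg}|^2\le(A(t)+B(t))^2 P_t|\nabla g|^2$. In case~(i) only the operator norm is controlled, so a further factor arises from $|\cdot|_\HS^2\le n|\cdot|^2$; in case~(ii) Theorems~\ref{general-hessian-theorem}/\ref{general-hessian-theorem2} supply the HS bound directly, with the dimensional factor $n$ appearing inside the $\beta$-term. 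Finally, $\mu$-symmetry of $P_t$ and $g=\log P_th$ give
\[
\int P_t|\nabla g|^2\,\vd\nu=\int|\nabla g|^2 P_th\,\vd\mu=I_\mu(P_th),
\]
so that dividing through by $\sqrt{I_\mu(P_th)}$ produces $I_\mu(P_th)\le\Psi(t)\,S^2(\nu\,|\,\mu)$: applying Theorem~\ref{general-hessian-theorem} delivers $\Psi_2$ (respectively $\tilde\Psi_2$) and applying Theorem~\ref{general-hessian-theorem2} delivers $\Psi_1$ (respectively $\tilde\Psi_1$), and taking the minimum yields $\Psi$.

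The main technical obstacle is the rigorous justification of the substitution $g=\log P_th$, since $\log P_th\notin C_0^\infty(M)$. One needs a compactly supported smooth approximation $g_n$ (e.g.\ $g_n=\chi_n\log(P_th\vee\eps_n)$) and must verify convergence of both sides of the bilinear identity. The necessary control is itself provided by the curvature-based gradient and Hessian estimates of Section~\ref{section-hessian}, so no new analytic machinery is required beyond careful bookkeeping.
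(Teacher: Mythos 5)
Your proposal is correct and follows essentially the same route as the paper's: express $I_\mu(P_th)$ as a Stein pairing $\int\langle\tau-\id,\Hess_{P_tg_t}\rangle_\HS\,\vd\nu$ with $g_t=\log P_th$ (using $L^2(\mu)$-symmetry of $P_t$ and the Stein-kernel definition), apply Cauchy--Schwarz, feed in the Hessian bounds from Theorems~\ref{general-hessian-theorem} and~\ref{general-hessian-theorem2} together with Jensen to absorb the $P_t|\nabla g|$ term, use $\int P_t|\nabla g_t|^2\,\vd\nu=I_\mu(P_th)$, and divide to obtain the self-improving inequality. The only (minor) difference is that you are more explicit than the paper about the need for a truncation argument to justify the substitution $g=\log P_th\notin C_0^\infty(M)$, which the paper leaves implicit.
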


\begin{proof}
  By Theorem \ref{general-hessian-theorem}, if $\Ric_V\geq K$,
 $\|R\|_{\infty}<\infty,$ and
  $\beta<\infty,$
  then
  \begin{align}\label{IS-ineq}
  |\Hess_{P_tf}|_\HS^2&\leq   \Psi(t)(P_t|\nabla f|^2). 
  \end{align}
 
  Let $g_t=\log P_th$.  By the symmetry of $(P_t)_{t\geq 0}$ in
  $L^2(\mu)$,
  \begin{align*}
    I_{\mu}(P_th)=-\int (Lg_t)P_th\, \vd\mu=-\int (LP_tg_t)h\, \vd\mu=-\int LP_tg_t\, \vd\nu.
  \end{align*}
  Hence, according to the definition of a Stein kernel, we have
  \begin{align*}
    I_{\mu}(P_th)
    &=-\int \langle \id, \Hess_{P_tg_t} \rangle_\HS\, \vd\nu -\int \langle \nabla V, \nabla P_tg_t \rangle\, \vd\nu\\
    &=\int \langle \tau_{\nu}-\id, \Hess_{P_tg_t} \rangle_\HS \, \vd\nu
  \end{align*}
  and hence by the Cauchy-Schwartz inequality,
  \begin{align*}
    I_{\mu}(P_th)&=\int \langle \tau_{\nu}-\id, \Hess_{P_tg_t} \rangle_\HS \, \vd\nu\\
                 &\leq \l(\int |\tau_{\nu}-\id|_\HS^2\, \vd\nu\r)^{1/2}\l(\int |\Hess_{P_tg_t}|^2_\HS\, \vd\nu\r)^{1/2}\\
                 &\leq \l(\int|\tau_{\nu}-\id|_\HS^2\, \vd\nu\r)^{1/2} \l(\Psi(t)\int P_t|\nabla g_t|^2\ \vd\nu\r)^{1/2},
  \end{align*}
  here we use \eqref{IS-ineq} by taking the function $g_t=\log P_th$
  inside.  Since
  \begin{align*}
    \int P_t|\nabla g_t|^2\, \vd \nu
    &=\int P_t|\nabla g_t|^2 h\,\vd\mu=\int |\nabla g_t|^2 P_th\, \vd\mu\\
    &=\int \frac{|\nabla P_th|^2}{P_th}\,\vd\mu= I_{\mu} (P_th),
  \end{align*}
  it then follows that
  \begin{align*}
    I_{\mu} (P_th)\leq \Psi(t) \int |\tau_{\nu}-\id|_\HS^2\,\vd\nu.
  \end{align*}
  Taking the infimum over all Stein kernels of $\nu$, we finish the
  proof of (i). The second item can be proved following the same steps as above by 
replacing the upper bound in \eqref{IS-ineq} by that in \eqref{esti-HS-ineq}.
\end{proof}

\begin{corollary}\label{esti-S}
  Assume that $\beta=0$ and $\|R\|_{\infty}<\infty$.  Let $\nu$ be a
  distribution satisfying $\vd \nu=h\, \vd \mu$ with $h\in
  C_b^2(M)$.
  \begin{enumerate}[\rm(i)]
  \item If\/ $\Ric_V\geq K,$ then for $t>0$,
    \begin{align}\label{IS-1}
      I_{\mu}(P_th)\leq \frac{n(2\|R\|_{\infty}-K)}{\e^{Kt}-\e^{2(K-\|R\|_{\infty})t}}\,S(\nu\,|\, \mu)^2.
    \end{align}
  \item If\/ $\Ric_V=K,$ then for $t>0$,
    \begin{align*}
      I_{\mu}(P_th)\leq \frac{2\|\tilde{R}\|_{\infty}-K}{\e^{Kt}-\e^{2(K-\|\tilde{R}\|_{\infty})t}}\,S(\nu\,|\, \mu)^2.
    \end{align*}
  \end{enumerate}
\end{corollary}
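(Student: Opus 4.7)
The plan is to obtain this corollary as a direct specialization of Theorem \ref{esti-entropy}: setting $\beta = 0$ in the expressions for $\Psi_2(t)$ and $\tilde{\Psi}_2(t)$ collapses the respective bounds to exactly the two quantities on the right-hand side of the statement, up to a cosmetic rearrangement of signs. There is no new analytic input; the heavy lifting has already been carried out in the Hessian estimate of Theorem \ref{general-hessian-theorem} and its conversion to a Fisher-information bound via the Stein kernel in Theorem \ref{esti-entropy}.

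For part (i), I start from Theorem \ref{esti-entropy}(i), which under $\Ric_V \geq K$ and $\beta < \infty$ furnishes $I_\mu(P_t h) \leq \Psi_2(t)\, S^2(\nu\,|\,\mu)$ with
\[
\Psi_2(t) = \frac{(K-2\alpha_1)n}{\e^{(2K-2\alpha_1)t}-\e^{Kt}} \l(1+\frac{\beta}{K}\l(\frac{\e^{Kt}-1}{K}\r)^{1/2}\r)^2.
\]
Plugging in $\beta = 0$ annihilates the parenthesised correction factor, and with $\alpha_1 = \|R\|_{\infty}$ the resulting ratio $\frac{n(K-2\|R\|_{\infty})}{\e^{(2K-2\|R\|_{\infty})t}-\e^{Kt}}$ coincides with the claimed $\frac{n(2\|R\|_{\infty}-K)}{\e^{Kt}-\e^{2(K-\|R\|_{\infty})t}}$ after multiplying numerator and denominator by $-1$.

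For part (ii), when $\Ric_V = K$ I instead invoke Theorem \ref{esti-entropy}(ii) with the sharper function $\tilde{\Psi}_2(t)$, which differs from $\Psi_2$ by replacing $\alpha_1$ with $\alpha_2 = \|\tilde{R}\|_{\infty}$ and by dropping the dimensional factor $n$; the latter reflects the availability of the Hilbert--Schmidt Hessian estimate \eqref{esti-HS-ineq} when the Bakry--\'Emery tensor is constant rather than merely bounded below. Imposing $\beta = 0$ again kills the correction term, and the same sign manipulation in numerator and denominator recovers the second claim.

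The only subtlety, and hardly an obstacle, is the bookkeeping needed to verify that the ratio $\frac{K-2\alpha}{\e^{(2K-2\alpha)t}-\e^{Kt}}$ equals $\frac{2\alpha-K}{\e^{Kt}-\e^{2(K-\alpha)t}}$ regardless of whether $2\alpha > K$ or $2\alpha < K$. In both regimes the quotient must be non-negative since it majorises the non-negative quantity $I_\mu(P_t h)$, and the two displayed expressions manifestly agree because numerator and denominator flip sign simultaneously. No further argument is required beyond citing Theorem \ref{esti-entropy}.
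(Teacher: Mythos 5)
Your proof is correct and coincides with the route the paper intends: Corollary~\ref{esti-S} is stated immediately after Theorem~\ref{esti-entropy} precisely as a specialization of that theorem, and substituting $\beta=0$ into $\Psi_2$ (for part~(i)) and $\tilde{\Psi}_2$ (for part~(ii)) reproduces the displayed bounds after the harmless sign flip in numerator and denominator. Your observation about the consistency of the two expressions $\frac{K-2\alpha}{\e^{(2K-2\alpha)t}-\e^{Kt}}$ and $\frac{2\alpha-K}{\e^{Kt}-\e^{2(K-\alpha)t}}$ regardless of the sign of $K-2\alpha$ is accurate and worth spelling out, as the paper leaves it implicit.
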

\begin{remark}\label{rem1}
When $M$ is a Ricci parallel manifold, $\nabla V$ is a
  Killing field, we have $\beta=0$ and in this case, we observe that when $\Ric_V=K>0$, both
  inequalities can be used to bound $I_{\mu}(P_th)$.  It is easy to see that
  when $K< 2(K-\|R\|_{\infty})$, the first inequality may give a
  smaller upper bound as the main decay rate is
  $\e^{-2(K-\|R\|_{\infty})t}$ which is faster than $\e^{-Kt}$. When
  $K< 2(K-\|\tilde{R}\|_{\infty})$ and if $\|\tilde{R}\|_{\infty}$ is small, then
  the second inequality is likely to give the sharper estimate as the upper bound in \eqref{IS-1}
  has an additional $n$.
\end{remark}

In case $|\nabla \Ric_V^{\sharp}+\vd^*R+R(\nabla V)|$ is not uniformly bounded, we have the following result.

\begin{theorem}\label{esti-S2}
Let $\nu$ be a
  distribution satisfying $\vd \nu=h\, \vd \mu$ with $h\in
  C_b^2(M)$. Assume that there exists $K>0$, $p>1$ and $\delta>0$ such that   $K_V(x)-\frac{2(p-1)}{p}(\delta \beta(x))^{\frac{p}{p-1}}\geq K$ for all $x\in M$, where $K_V$ and $\beta$ are defined as in \eqref{beta} and \eqref{KV}.  Moreover, assume that $\alpha_1:=\|R\|_{\infty}<\infty$. Then for $f\in C_b^2(M)$,
    \begin{align}\label{IS-1}
      I_{\mu}(P_th)\leq n \l(1+\l(\frac{\alpha_1}{\sqrt{K}}+\frac{1}{\delta2^{(p-1)/p}(pK)^{1/p}}\r)\sqrt{\int_0^t\e^{Kr}\,\vd r}\r)^2\frac{\e^{-Kt}}{\int_0^t\e^{Kr}\, \vd r} \,S(\nu\,|\, \mu)^2.
    \end{align}
\end{theorem}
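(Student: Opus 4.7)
The strategy is to combine the Hessian estimate of Theorem~\ref{general-hessian-theorem3}---which was designed precisely to handle the unbounded-$\beta$ regime via the Young-type condition on $K_V(x)-\tfrac{2(p-1)}{p}(\delta\beta(x))^{p/(p-1)}$---with the Stein-kernel duality used in the proof of Theorem~\ref{esti-entropy}. Structurally, the only genuinely new ingredient is the passage from the operator-norm Hessian bound of Theorem~\ref{general-hessian-theorem3} to an $\HS$-norm bound of the form $|\Hess_{P_tf}|_{\HS}^2\le \Psi(t)\, P_t|\nabla f|^2$.

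First I would consolidate the two-term right-hand side of Theorem~\ref{general-hessian-theorem3}. Applying Jensen's inequality $P_t|\nabla f|\le (P_t|\nabla f|^2)^{1/2}$ to the second summand and then factoring out the common prefactor $\e^{-Kt/2}/\sqrt{\int_0^t\e^{Kr}\,\vd r}$ yields
\[
|\Hess_{P_tf}|\le \l(1+\l(\frac{\alpha_1}{\sqrt{K}}+\frac{1}{\delta\, 2^{(p-1)/p}(pK)^{1/p}}\r)\sqrt{\int_0^t\e^{Kr}\,\vd r}\r)\frac{\e^{-Kt/2}}{\sqrt{\int_0^t\e^{Kr}\,\vd r}}(P_t|\nabla f|^2)^{1/2}.
\]
Squaring and invoking $|\Hess|_{\HS}^2\le n\,|\Hess|^2$ (valid for any symmetric $2$-tensor on an $n$-dimensional space) produces precisely the coefficient $\Psi(t)$ carrying the factor $n$ that appears on the right-hand side of the stated inequality.

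Second, I would apply this $\HS$-estimate with $f=g_t:=\log P_th$ and repeat the calculation from the proof of Theorem~\ref{esti-entropy} essentially verbatim. By the $\mu$-symmetry of $P_t$ one has $I_{\mu}(P_th)=-\int LP_tg_t\,\vd\nu$; the defining property of a Stein kernel $\tau_\nu\in\S_\nu$ converts this into $\int\langle \tau_\nu-\id,\Hess_{P_tg_t}\rangle_{\HS}\,\vd\nu$; then Cauchy--Schwarz in $\HS$, combined with the identity $\int P_t|\nabla g_t|^2\,\vd\nu=I_{\mu}(P_th)$, delivers
\[
I_{\mu}(P_th)\le \l(\int|\tau_\nu-\id|_\HS^2\,\vd\nu\r)^{1/2}\sqrt{\Psi(t)\,I_{\mu}(P_th)}.
\]
Dividing by $\sqrt{I_{\mu}(P_th)}$ (which we may assume positive; otherwise nothing to prove) and squaring, then taking the infimum over $\tau_\nu\in\S_\nu$, yields the announced bound.

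The main work is really upstream, in Theorem~\ref{general-hessian-theorem3}: the Young-type inequality applied to the drift-like term $\beta(X_t)|Q_t|^2 k(t)$ is what trades the uniform control $\|\beta\|_\infty<\infty$ for the integrated pointwise condition on $K_V-\tfrac{2(p-1)}{p}(\delta\beta)^{p/(p-1)}$. Once that pointwise Hessian estimate is available, the passage to the Fisher/Stein inequality is structurally identical to the bounded-$\beta$ setting treated in Theorem~\ref{esti-entropy}, so no additional analytic difficulty arises at the level of the present statement.
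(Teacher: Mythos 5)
Your proposal is correct and matches the paper's (implicit) route: the paper treats this theorem as the direct analogue of Corollary~\ref{esti-Hess} (the version following Theorem~\ref{general-hessian-theorem2}), obtained by absorbing the $P_t|\nabla f|$ term into $(P_t|\nabla f|^2)^{1/2}$ via Jensen, upgrading the operator-norm bound to an HS-bound with the factor $n$, and then plugging the resulting $\Psi(t)$ into the Stein-kernel duality argument of Theorem~\ref{esti-entropy}. The consolidation you describe reproduces exactly the coefficient $n\bigl(1+\bigl(\tfrac{\alpha_1}{\sqrt K}+\tfrac{1}{\delta 2^{(p-1)/p}(pK)^{1/p}}\bigr)\sqrt{\smash{\int_0^t\e^{Kr}\,\vd r}}\bigr)^2\frac{\e^{-Kt}}{\int_0^t\e^{Kr}\,\vd r}$, so there is nothing to add.
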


Using Theorem \ref{esti-entropy}, we have the following inequality connecting the entropies $H,S$ and $I$.

\begin{theorem}[HSI inequality]\label{Gen-HSI-them}
  Suppose that $\Ric_V\geq K$ for some $K>0$. Let $\nu$ be a distribution satisfying
  $\vd \nu=h\,\vd \mu$. Assume that 
\begin{align*}
\|\Hess_{P_tf}\|_{\HS}^2\leq \Psi(t) P_t|\nabla f|^2,
\end{align*}
for some function $\Psi\in C([0,\infty))$.
then 
    \begin{align*}
  H(\nu\,|\, \mu)\leq \frac12\inf_{u>0}\l\{I(\nu\,|\,\mu)\int_0^u\e^{-Kt}\,\vd t+S(\nu\,|\,\mu)^2\int_u^{\infty}\Psi(t)\,\vd t\r\}.
\end{align*}

  \end{theorem}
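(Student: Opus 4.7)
The plan is to combine the two available estimates for the Fisher information of $\nu^t$ by splitting the integrated de Bruijn identity at a free parameter $u > 0$. Concretely, by Proposition \ref{auxi-prop}(i) we have
\begin{equation*}
H(\nu\,|\,\mu) = \frac{1}{2}\int_0^\infty I_\mu(P_t h)\,\vd t = \frac{1}{2}\int_0^u I_\mu(P_t h)\,\vd t + \frac{1}{2}\int_u^\infty I_\mu(P_t h)\,\vd t,
\end{equation*}
so the task reduces to controlling the two tails by $I(\nu\,|\,\mu)$ and $S(\nu\,|\,\mu)^2$ respectively.

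For the short-time part $t \in [0,u]$, I would invoke the exponential decay of Fisher information from Proposition \ref{auxi-prop}(ii), which is valid because $\Ric_V \geq K$: this gives $I_\mu(P_t h) \leq \e^{-Kt} I(\nu\,|\,\mu)$ and hence
\begin{equation*}
\frac{1}{2}\int_0^u I_\mu(P_t h)\,\vd t \leq \frac{1}{2}\,I(\nu\,|\,\mu)\int_0^u \e^{-Kt}\,\vd t.
\end{equation*}

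For the long-time part $t \in [u,\infty)$, the idea is to repeat the Stein-kernel argument from the proof of Theorem~\ref{esti-entropy} but using the abstract Hessian assumption $\|\Hess_{P_t f}\|_{\HS}^2 \leq \Psi(t) P_t|\nabla f|^2$ in place of the explicit bounds of Theorem~\ref{general-hessian-theorem}. Setting $g_t = \log P_t h$, symmetry of $P_t$ in $L^2(\mu)$ gives $I_\mu(P_t h) = -\int L P_t g_t\,\vd\nu$, and the defining property of a Stein kernel $\tau_\nu$ allows us to rewrite this as $\int \langle \tau_\nu - \id, \Hess_{P_t g_t}\rangle_\HS\,\vd\nu$. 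By Cauchy--Schwarz, the assumed Hessian bound, and the identity $\int P_t|\nabla g_t|^2\,\vd\nu = I_\mu(P_t h)$, this yields $I_\mu(P_t h) \leq \Psi(t)\,S(\nu\,|\,\mu)^2$ after optimizing over $\tau_\nu \in \S_\nu$, so that
\begin{equation*}
\frac{1}{2}\int_u^\infty I_\mu(P_t h)\,\vd t \leq \frac{1}{2}\,S(\nu\,|\,\mu)^2\int_u^\infty \Psi(t)\,\vd t.
\end{equation*}

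Adding the two estimates and taking the infimum over $u > 0$ yields the claimed HSI inequality. There is no serious obstacle here: the proof is essentially a "splitting" argument, and the only non-routine step is recognizing that the Stein-kernel calculation from Theorem~\ref{esti-entropy} goes through verbatim under the abstract Hessian hypothesis. One mild point worth checking is integrability: the integral $\int_u^\infty \Psi(t)\,\vd t$ must be finite for the bound to be informative, but if it diverges the stated inequality is vacuous, so no additional hypothesis is needed.
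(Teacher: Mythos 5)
Your proof is correct and follows the same route as the paper: split the integrated de Bruijn identity at $u$, control the short-time part by the exponential decay of Fisher information, control the long-time part by the Stein-kernel Cauchy--Schwarz argument (noting, as you correctly do, that the argument from Theorem~\ref{esti-entropy} goes through verbatim under the abstract Hessian hypothesis), and optimize over $u$. The paper's proof is identical in substance, simply citing the two bounds as "facts" rather than spelling out the reduction to the abstract $\Psi$.
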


\begin{proof}
By Proposition \ref{auxi-prop} (i), we have
 \begin{align*}
    H(\nu\,|\,\mu)=\frac12\int_0^{\infty}I_{\mu}(P_th)\, \vd t.
  \end{align*}
Combining this with the following facts:
 \begin{align*}
    I_{\mu}(P_th)\leq \e^{-Kt}I(\nu\,|\, \mu),
  \end{align*}
and 
\begin{align*}
I_{\mu}(P_th)\leq \Psi(t) S^2(\nu\,|\, \mu),
\end{align*}
we obtain
  \begin{align*}
    H(\nu\,|\, \mu)&\leq  \frac12\inf_{u>0}\Bigg\{I(\nu\,|\,\mu)\int_0^u\e^{-Kt}\,\vd t+S(\nu\,|\,\mu)^2\int_u^{\infty}\Psi(t)\,\vd t\Bigg\}.
  \end{align*}
\end{proof}

  \begin{remark}
 Suppose that $\beta=\|\nabla \Ric_V^{\sharp}+\vd^*R+R(\nabla V)\|_{\infty}<\infty$. If $\beta=0$, then
  combined with Corollary \ref{esti-S} (i), we get the HSI inequality
  \begin{align*}
    H(\nu\,|\, \mu)&\leq  \frac12\inf_{u>0}\Bigg\{I(\nu\,|\,\mu)\int_0^u\e^{-Kt}\,\vd t+nS(\nu\,|\,\mu)^2\int_u^{\infty}\l(\frac{K-2\|R\|_{\infty}}{\e^{(2K-2\|R\|_{\infty})t}-\e^{Kt}}\r)\,\vd t\Bigg\}.
  \end{align*}
  The term  $$\frac{K-2\|R\|_{\infty}}{\e^{(2K-2\|R\|_{\infty})t}-\e^{Kt}}$$
   has the decay rate at least $\e^{-Kt}$.
  If\/ $\beta\neq 0$, the decay rate  
$$n\l(\frac{K-2\|R\|_{\infty}}{\e^{(2K-2\|R\|_{\infty})t}-\e^{Kt}}\r)\l(1+\frac{\beta}{K}\l(\frac{\e^{Kt}-1}{K}\r)^{1/2}\r)^2$$
won't be faster than $\e^{-Kt}$.  In this case, using Corollary
\ref{esti-Hess}, the decay rate of
$$n \l(1+\l(\frac{\|R\|_{\infty}}{\sqrt{K}}+\frac{\beta }{K}\r)\l(\frac{\e^{Kt}-1}{K}\r)^{1/2}\r)^2\frac{\e^{-Kt}}{\int_0^t\e^{Kr}\, \vd r}$$
is the same as $\e^{-Kt}$. From this point of view, when
$\beta\neq 0$, we may choose the estimate from Corollary
\ref{esti-Hess} to establish the HSI inequality.

\end{remark}

To make the upper bounds in Theorem \ref {Gen-HSI-them} more explicit,
we continue the discussion by assuming
$|\nabla \Ric_V^{\sharp}+\vd^*R+R(\nabla V)|$ is bounded, dealing with
the cases $\beta=0$ and $\beta\neq 0$ separately. We also treat the case
that the norm is not bounded
but satisfies the specific conditions of Corollary~\ref{esti-S2}.

\subsection{Case I: $\beta=0$.}
We first introduce the main result of this subsection.

\begin{theorem}\label{case-0}
Assume that 
$\|R\|_{\infty}<\infty$ and $\beta=0$. 
\begin{enumerate}[\rm(i)]
  \item If\/ $\Ric_V\geq K>0$ and $\alpha:=K-2\|R\|_{\infty}>0$, then
    \begin{align}
      H(\nu\,|\, \mu)&\leq  \frac{I(\nu\,|\,\mu)}{2K}\l(1-\l(\frac{I(\nu\,|\,\mu)}{I(\nu\,|\,\mu)+\alpha nS^2(\nu\,|\,\mu)}\r)^{K/\alpha }\r)\notag\\
      &\quad+\frac{nS^2(\nu\,|\,\mu)}{2}\int_0^{\textstyle\frac{I(\nu\,|\,\mu)}{I(\nu\,|\,\mu)+\alpha nS^2(\nu\,|\,\mu)}}\frac{r^{K/\alpha }}{1-r}\,\vd r.\label{eq-H1}
    \end{align}
  \item [\rm(i')] If\/ $\Ric_V\geq K>0$ and
    $\alpha=K-2\|R\|_{\infty}=0$, then
    \begin{align*}
      H(\nu\,|\, \mu)\leq  \frac{I(\nu\,|\,\mu)}{2K}\l(1-\e^{-nKS^2(\nu\,|\,\mu)/I(\nu\,|\,\mu)}\r)+\frac{nS^2(\nu\,|\,\mu)}{2}\,{\rm li}(\e^{-nKS^2(\nu\,|\,\mu)/I(\nu\,|\,\mu)}),
    \end{align*}
    where ${\rm li}(x)=\int_0^x \frac{1}{\ln t}\,\vd t$ is the logarithmic
    integral function.
  \item If\/ $\Ric_V=K>0$ and $\tilde{\alpha}:=K-2\|\tilde{R}\|_{\infty}>0$, then
    \begin{align}\label{eq-H2}
      \hskip-.7cmH(\nu\,|\, \mu)\leq & \frac{I(\nu\,|\,\mu)}{2K}\l(1-\l(\frac{I(\nu\,|\,\mu)}{I(\nu\,|\,\mu)+\tilde{\alpha} S^2(\nu\,|\,\mu)}\r)^{K/\tilde{\alpha}}\r) \notag \\
& \quad +
                            \frac{S^2(\nu\,|\,\mu)}{2}\int_0^{\textstyle\frac{I(\nu\,|\,\mu)}{I(\nu\,|\,\mu)+\tilde{\alpha} S^2(\nu\,|\,\mu)}}\frac{r^{K/\tilde{\alpha}}}{1-r}\,\vd r.
    \end{align}
    Moreover, if  $\Hess_V=K$, then
    \begin{align*}
      H(\nu\,|\,\mu)\leq \frac12 S^2(\nu\,|\,\mu)\log \l(1+\frac{I(\nu\,|\,\mu)}{KS^2(\nu\,|\,\mu)}\r).
    \end{align*}
  \item [\rm(ii')] If\/ $\Ric_V\geq K>0$ and
    $\tilde{\alpha}=0$, then
    \begin{align*}
      H(\nu\,|\, \mu)\leq \frac{I(\nu\,|\,\mu)}{2K}\l(1-\e^{-KS^2(\nu\,|\,\mu)/I(\nu\,|\,\mu)}\r)+\frac{S^2(\nu\,|\,\mu)}{2}\,{\rm li}(\e^{-KS^2(\nu\,|\,\mu)/I(\nu\,|\,\mu)}),
    \end{align*}
    where ${\rm li}(x)=\int_0^x \frac{1}{\ln t}\,\vd t$ is again the logarithmic
    integral function.
  \end{enumerate}
\end{theorem}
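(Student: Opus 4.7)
The plan is to combine Theorem \ref{Gen-HSI-them} with the explicit forms of $\Psi(t)$ provided by Corollary \ref{esti-S} and to carry out the infimum in $u>0$ in closed form. For case (i), with $\alpha:=K-2\|R\|_{\infty}>0$, Corollary \ref{esti-S}(i) rewrites as $\Psi(t)=n\alpha\,\e^{-Kt}/(\e^{\alpha t}-1)$. Inserting this into the HSI template, the first-order optimality condition $I(\nu\,|\,\mu)\,\e^{-Ku^{*}}=S^{2}(\nu\,|\,\mu)\,\Psi(u^{*})$ reduces to $\e^{\alpha u^{*}}-1=n\alpha S^{2}/I$, so that $\e^{-\alpha u^{*}}=I(\nu\,|\,\mu)/(I(\nu\,|\,\mu)+\alpha n\,S^{2}(\nu\,|\,\mu))$. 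Evaluating the two pieces at $u^{*}$, the first integral equals $(1-\e^{-Ku^{*}})/K$ with $\e^{-Ku^{*}}=(\e^{-\alpha u^{*}})^{K/\alpha}$, giving the first summand of \eqref{eq-H1}; the change of variable $r=\e^{-\alpha t}$ (so that $\vd t=-\vd r/(\alpha r)$ and $\e^{-Kt}=r^{K/\alpha}$) transforms the tail $\int_{u^{*}}^{\infty}\Psi(t)\,\vd t$ into $n\int_{0}^{\e^{-\alpha u^{*}}}\frac{r^{K/\alpha}}{1-r}\,\vd r$, giving the second summand.

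Case (i') is then recovered as the degenerate limit $\alpha\downarrow 0$: L'H\^opital applied to Corollary \ref{esti-S}(i) produces $\Psi(t)=n\e^{-Kt}/t$, the optimality condition yields $u^{*}=nS^{2}/I$, and the substitution $r=\e^{-Kt}$ rewrites the tail integral in terms of the logarithmic integral $\mathrm{li}$. Case (ii) is handled in exactly the same manner using Corollary \ref{esti-S}(ii), which amounts to replacing $\alpha$ by $\tilde\alpha=K-2\|\tilde R\|_{\infty}$ and dropping the factor $n$ throughout. Case (ii') is in turn the $\tilde\alpha\downarrow 0$ limit of (ii), handled identically to (i').

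To extract the sub-case $\Hess_V=K$ of (ii), the condition $\Ric_V=K$ combined with $\Hess_V=K$ forces $\Ric=0$, and on a flat background the parameters collapse to $K/\tilde\alpha=1$. Using the elementary antiderivative $\int\frac{r}{1-r}\,\vd r=-\ln(1-r)-r$ together with $1-\e^{-\tilde\alpha u^{*}}=KS^{2}/(I+KS^{2})$, the two summands of \eqref{eq-H2} telescope: the $-u$ contribution of the antiderivative cancels the first summand exactly, leaving $\frac{1}{2}S^{2}(\nu\,|\,\mu)\log\bigl(1+I(\nu\,|\,\mu)/(KS^{2}(\nu\,|\,\mu))\bigr)$, in agreement with the classical Gaussian HSI bound of \cite{LNP15}. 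The only non-mechanical step throughout the proof is the change of variable $r=\e^{-\alpha t}$ (or $r=\e^{-Kt}$ in the degenerate cases), which is dictated by the form of the target right-hand side; existence of $u^{*}\in(0,\infty)$ and convergence of both integrals are immediate from $K>0$ and $\alpha>0$ (resp.\ $\tilde\alpha>0$), so the main bookkeeping challenge is the telescoping algebraic identity underlying the $\Hess_V=K$ collapse.
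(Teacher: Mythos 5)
Your proposal follows exactly the route of the paper: Theorem~\ref{Gen-HSI-them} supplies the entropy bound as a one-parameter infimum, Corollary~\ref{esti-S} supplies the explicit rate function $\Psi$, and the closed-form expressions in (i), (i'), (ii), (ii') are obtained by the first-order optimality condition $I\e^{-Ku^*}=S^2\Psi(u^*)$ together with the change of variable $r=\e^{-\alpha t}$ (resp.\ $r=\e^{-Kt}$). Your computations of $u^*$, of the first summand via $\e^{-Ku^*}=(\e^{-\alpha u^*})^{K/\alpha}$, and of the tail integral via the substitution are all correct and agree with the paper's, and the telescoping of the antiderivative $\int\frac{r}{1-r}\,\vd r=-\ln(1-r)-r$ in the $\Hess_V=K$ sub-case is verified correctly.

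There is, however, one genuine gap in the $\Hess_V=K$ sub-case of (ii). You infer $\Ric=0$ from $\Ric_V=K$ and $\Hess_V=K$, and then say ``on a flat background the parameters collapse to $K/\tilde\alpha=1$.'' But $\Ric=0$ alone does not imply that the full curvature tensor vanishes (Ricci-flat manifolds need not be flat in dimension $\ge 4$), and $\tilde\alpha=K$ requires $\|\tilde R\|_\infty=0$, i.e.\ $R\equiv 0$. The step you need here is Obata's Rigidity Theorem: $\Hess_V=K$ for a constant $K>0$ on a complete manifold forces $M$ to be isometric to $\R^n$ (this is how the paper argues), which then delivers $R=0$, $\Ric_V=K$, $\tilde\alpha=K$ and $\beta=0$ all at once. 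You need to deduce both $\Ric_V=K$ and flatness from $\Hess_V=K$, rather than assume $\Ric_V=K$ as a separate hypothesis. With this citation added, your argument is complete.
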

\begin{proof}
  We only need to prove the first two estimates (i) and (i'); then
  (ii) and (ii') are obtained through replacing $nS^2(\nu\,|\,\mu)$ by
  $S^2(\nu\,|\,\mu)$,  and $\|R\|_{\infty}$ by
  $\|\tilde{R}\|_{\infty}$, respectively.    
  By Theorem \ref{Gen-HSI-them}~(i) and (ii), we have
  \begin{align*}
    H(\nu\,|\, \mu)
    &\leq \frac12\inf_{u>0}\l\{I(\nu\,|\,\mu)\int_0^u\e^{-Kt}\,\vd t+nS(\nu\,|\,\mu)^2\int_u^{\infty}\frac{\alpha}{\e^{Kt}(\e^{\alpha t}-1)}\,\vd t\r\}\\
    &=\frac12\inf_{u>0}\l\{\frac{I(\nu\,|\,\mu)(1-\e^{-Ku})}{K}+nS(\nu\,|\,\mu)^2\int_0^{\e^{-\alpha u}}\frac{r^{K/\alpha }}{1-r}\,\vd r\r\}.
  \end{align*}
In the sequel we write $I=I(\nu\,|\,\mu)$
and $S=S(\nu\,|\,\mu)$ for simplicity.
It is easy to see that $\inf$ is reached for
  $\e^{\alpha u}=(\alpha n  S^2+I)/I$ so that
  \begin{align}\label{alpha-large-0}
    H(\nu\,|\,\mu)\leq \frac{I}{2K}\l(1-\l(\frac{I}{I+\alpha n  S^2}\r)^{K/\alpha }\r)+\frac{nS^2}{2}\int_0^{\textstyle\frac{I}{I+\alpha n  S^2}}\frac{r^{K/\alpha }}{1-r}\,\vd r.
  \end{align}
  We thus obtain (i).  The case $\alpha=0$ can be dealt as limiting
  result of \eqref{alpha-large-0} when $\alpha$ tends to $0$, i.e.,
  \begin{align*}
    \lim_{\alpha\rightarrow 0}
    &\l\{\frac{I}{2K}\l(1-\l(\frac{I}{I+\alpha n S^2}\r)^{K/\alpha }\r)+\frac{nS^2}{2}\int_0^{\textstyle\frac{I}{I+\alpha n  S^2}}\frac{r^{K/\alpha }}{1-r}\,\vd r\r\}\\
    &=\frac{I}{2K}\l(1-\e^{-K n \frac{S^2}{I}}\r)+\frac{nS^2}{2}\lim_{\alpha\rightarrow 0}
      \int_0^{\textstyle\frac{I}{I+\alpha n S^2}}\frac{r^{K/\alpha }}{1-r}\,\vd r\\
    &=\frac{I}{2K}\l(1-\e^{-nK\frac{S^2}{I}}\r)+\frac{nS^2}{2}\lim_{\alpha\rightarrow 0}
      \int_{\textstyle\frac{n S^2}{I+\alpha n S^2}}^{1/\alpha}\frac{(1-\alpha t)^{K/\alpha }}{t}\,\vd t\\
    &=\frac{I}{2K}\l(1-\e^{-nK\frac{S^2}{I}}\r)+\frac{nS^2}{2}
      \int_{\textstyle\frac{n S^2}{I}}^{\infty}\frac{\e^{-Kt}}{t}\,\vd t\\
    &=\frac{I}{2K}\l(1-\e^{-nK\frac{S^2}{I}}\r)+\frac{nS^2}{2}\,{\rm li}(\e^{-K\frac{nS^2}{I}})
  \end{align*}
  which proves (i').
  
  If $\Hess_V=K$, by Obata's Rigidity Theorem
  (see \cite[Theorem 2]{Tashiro:1965} or \cite[Theorem 6.3]{WY2014}), if $\dim M\geq2$, then $M$ is
 isometric to $\mathbb{R}^n$ which implies $\Ric_V=K$, $\alpha_n=K$
  and $\beta=0$. Thus
  by \eqref{eq-H2},
  \begin{align*}
    H(\nu\,|\, \mu)
    &\leq  \frac{I(\nu\,|\,\mu)}{2K}\l(1-\l(\frac{I(\nu\,|\,\mu)}{I(\nu\,|\,\mu)+K S^2(\nu\,|\,\mu)}\r)\r)+
      \frac{S^2(\nu\,|\,\mu)}{2}\int_0^{\textstyle\frac{I(\nu\,|\,\mu)}{I(\nu\,|\,\mu)+K S^2(\nu\,|\,\mu)}}\frac{r}{1-r}\,\vd r\\
    &=\frac{S^2(\nu\,|\,\mu)I(\nu\,|\,\mu) }{2(I(\nu\,|\,\mu)+K S^2(\nu\,|\,\mu))}+
      \frac{S^2(\nu\,|\,\mu)}{2}\int_0^{\textstyle\frac{I(\nu\,|\,\mu)}{I(\nu\,|\,\mu)+K S^2(\nu\,|\,\mu)}}\l(\frac{1}{1-r}-1\r)\,\vd r\\
    &=\frac12 S^2(\nu\,|\,\mu)\log \l(1+\frac{I(\nu\,|\,\mu)}{KS^2(\nu\,|\,\mu)}\r),
  \end{align*}
  which covers the result in \cite[Theorem 2.2]{LNP15} for the Euclidean case $M=\R^n$
  and $\mu$ the standard Gaussian distribution on $\R^n$.
\end{proof}

\begin{remark}\label{rem3}
  In the case $\Ric_V=K>0$ and $\tilde{\alpha}>0$ (which implies $\alpha>0$), both 
  inequalities \eqref{eq-H1} and \eqref{eq-H2} hold. Hence one may choose
  the one which provides the sharper estimate.

\end{remark}

The case that $\beta=0$ and  $\alpha$ or $\tilde{\alpha}$ is less than 0,
can be dealt as follows.
\begin{theorem}\label{case-2}
  Assume that $\beta=0$ and $\|R\|_{\infty}<\infty$.
  \begin{enumerate}[\rm(i)]
  \item If\/ $\Ric_V\geq K>0$ and $\alpha:=K-2\|R\|_{\infty}<0$, then
    \begin{align}\label{HSI-2}
      H(\nu\,|\,\mu)\leq \frac{n S^2(\nu\,|\,\mu)\max\l\{-\alpha,K\r\}}{2K}\,\Theta\l(\frac{I(\nu\,|\,\mu)}{nS^2(\nu\,|\,\mu)\max\{-\alpha,K\}}\r)
    \end{align}
    where
    \begin{align*}
      \Theta(r)=\left\{
      \begin{array}{ll}
        1+\log r, &\ \  r\geq 1; \\
        r, &\ \  0<r<1.
      \end{array}
             \right.
    \end{align*}
  \item If\/ $\Ric_V= K>0$ and $\tilde{\alpha}:=K-2\|\tilde{R}\|_{\infty}<0$, then
    \begin{align*}
      H(\nu\,|\,\mu)\leq \frac{ S^2(\nu\,|\,\mu)\max\l\{-\tilde{\alpha},K\r\}}{2K}\,\Theta\l(\frac{I(\nu\,|\,\mu)}{S^2(\nu\,|\,\mu)\max\{-\tilde{\alpha},K\}}\r).
    \end{align*}
  \end{enumerate}

\end{theorem}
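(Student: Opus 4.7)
The plan is to combine the integrated de Bruijn identity with the two available pointwise bounds on the Fisher information along $P_t$, and then optimize a split time. By Proposition \ref{auxi-prop}(i) we have $2H(\nu\,|\,\mu)=\int_0^\infty I_\mu(P_t h)\,\vd t$, together with $I_\mu(P_t h)\le I(\nu\,|\,\mu)\,\e^{-Kt}$ from Proposition \ref{auxi-prop}(ii), and $I_\mu(P_t h)\le \Psi(t)\,S^2(\nu\,|\,\mu)$ from Corollary \ref{esti-S}. Under $\beta=0$ and $\alpha<0$, the latter reduces to $\Psi(t)=\frac{n|\alpha|}{\e^{Kt}(1-\e^{-|\alpha|t})}$ (and similarly without the factor $n$ in the $\Ric_V=K$ case of part~(ii)). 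Since $\Psi(t)\sim n/t$ near $0$, the direct single-point optimization used in Theorem \ref{case-0} no longer applies and a different scheme is required.

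The central simplification is to majorize $\Psi$ by
\begin{equation*}
\hat\Psi(t):=\frac{nM\,\e^{-Kt}}{1-\e^{-Mt}},\qquad M:=\max\{|\alpha|,K\},
\end{equation*}
and similarly without the factor $n$ in part (ii) with $M=\max\{|\tilde\alpha|,K\}$. This uses the elementary monotonicity of $a\mapsto a/(1-\e^{-at})$ in $a>0$, which reduces to checking that the numerator of its derivative, $1-\e^{-at}(1+at)$, is nonnegative for $at\ge 0$ (a single differentiation).

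Set $r:=I(\nu\,|\,\mu)/(nMS^2(\nu\,|\,\mu))$. The inequality $I\e^{-Kt}\le S^2\hat\Psi(t)$ is equivalent to $1-\e^{-Mt}\le 1/r$. In case $r\le 1$ this holds for all $t\ge 0$, so $2H\le\int_0^\infty I\e^{-Kt}\,\vd t=I/K$, giving $H\le I/(2K)=\frac{nMS^2}{2K}\cdot r$, which matches the branch $\Theta(r)=r$. In case $r>1$, choose $t^\ast$ with $\e^{-Mt^\ast}=1-1/r$ and split
\begin{equation*}
2H\le \int_0^{t^\ast} I\e^{-Kt}\,\vd t+\int_{t^\ast}^\infty S^2\hat\Psi(t)\,\vd t.
\end{equation*}
The first integral equals $I(1-(1-1/r)^{K/M})/K$; in the second, the substitution $v=\e^{-Mt}$ produces $nS^2\int_0^{1-1/r}\frac{v^{p-1}}{1-v}\,\vd v$ with $p:=K/M\in(0,1]$.

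Combining these two pieces, the target bound $H\le\frac{nMS^2}{2K}(1+\log r)$ reduces to the analytic inequality
\begin{equation*}
F(w):=\frac{1-w^p}{1-w}+p\int_0^w\frac{v^{p-1}}{1-v}\,\vd v-\bigl(1-\log(1-w)\bigr)\le 0,
\end{equation*}
for $w=1-1/r\in[0,1)$ and $p\in(0,1]$. One checks $F(0)=0$ and, after placing all three derivative contributions over the common denominator $(1-w)^2$, obtains the clean form $F'(w)=\frac{w-w^p}{(1-w)^2}\le 0$, using only that $w^p\ge w$ for $w\in[0,1]$ and $p\in(0,1]$. Hence $F\le 0$, finishing part (i). The step I expect to demand the most care is the cancellation that yields this clean form of $F'(w)$, but it is a short and routine computation. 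Part (ii) is identical after replacing $\|R\|_\infty$ by $\|\tilde R\|_\infty$, $|\alpha|$ by $|\tilde\alpha|$, and suppressing the prefactor $n$ throughout.
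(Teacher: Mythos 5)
Your proof is correct, and the overall scaffolding (de Bruijn's identity, the two competing bounds on $I_\mu(P_th)$, and a splitting time) is the same as the paper's; however the central technical step is genuinely different. The paper majorizes the singular kernel by $\Psi(t)\le \dfrac{n\max\{-\alpha,K\}}{\e^{Kt}-1}$, using the elementary inequality $1-\e^{-Ku}\leq\max\{1,-K/\alpha\}(1-\e^{\alpha u})$. This puts $K$ in \emph{both} exponentials, so the tail integral is a single logarithm and the split time can be optimized in closed form, landing directly on $\Theta$. You instead keep the sharper majorant $\hat\Psi(t)=\dfrac{nM\e^{-Kt}}{1-\e^{-Mt}}$ (which in the regime $K\le|\alpha|$ actually equals $\Psi$), obtained from the monotonicity of $a\mapsto a/(1-\e^{-at})$. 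The price is that the tail integral becomes an incomplete beta-type integral $\int_0^w v^{p-1}/(1-v)\,\vd v$ with $p=K/M\le1$, and matching the $\Theta$-bound requires the auxiliary inequality $F(w)\le0$, which you correctly verify via the cancellation $F'(w)=(w-w^p)/(1-w)^2\le0$. What this buys you is an intermediate estimate that is, before the last step, no worse than the paper's (and strictly better when $K<|\alpha|$, since then the paper's majorant strictly exceeds $\Psi$ while yours equals it); what it costs is the extra calculus lemma. Both are valid; the paper's route is shorter precisely because the weaker majorant was chosen so that the final optimization is exact, whereas yours proves a potentially sharper bound and then relaxes it to the stated form. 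One small caveat worth noting: you also chose a specific (crossover) split time $t^\ast$ rather than optimizing over all splits; this is fine since you only need an upper bound, and the $p=1$ case shows it is actually optimal there.
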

\begin{proof}
  As $\alpha:=K-2\|R\|_{\infty}<0$, we have
  \begin{align*}
    1-\e^{-K u}\leq \max\{1,-K/\alpha\}(1-\e^{\alpha u}),
  \end{align*}
  and then
  \begin{align*}
    \frac{-\alpha}{\e^{Ku}-\e^{(K+\alpha)u}}\leq \max\l\{-\alpha,K\r\}\frac{1}{\e^{Ku}-1},
  \end{align*}
  which implies
  \begin{align*}
    H(\nu\,|\,\mu)&\leq  I(\nu\,|\,\mu)\frac{1-\e^{-Ku}}{2K}+\frac{n}{2}S^2(\nu\,|\,\mu)\int_u^{\infty}\max\{-\alpha,K\}\,\frac{1}{\e^{Kt}-1}\,\vd t\\
                  &=I(\nu\,|\,\mu)\frac{1-\e^{-Ku}}{2K}-\frac{n}{2K}S^2(\nu\,|\,\mu)\max\l\{-\alpha, K\r\}\ln(1-\e^{-Ku}).
  \end{align*}
  This further implies
  \begin{align*}
    H(\nu\,|\,\mu)&\leq \frac12\inf_u\l\{I(\nu\,|\,\mu)\frac{1-\e^{-Ku}}{K}-\frac{nS^2(\nu\,|\,\mu)}{K}\max\l\{-\alpha, K\r\}\ln(1-\e^{-Ku})\r\}\\
                  &=\frac{n S^2(\nu\,|\,\mu)\max\l\{-\alpha,K\r\}}{2K}\,\Theta\l(\frac{I(\nu\,|\,\mu)}{nS^2(\nu\,|\,\mu)\max\{-\alpha,K\}}\r).\qedhere
  \end{align*}
\end{proof}

\subsection{Case II : $\beta\neq 0$.}

We start by introducing the main theorem of this subsection which also
provides a general way to the HSI inequality. 

\begin{theorem}\label{K-R1-s2}
Assume that $\alpha_1:=\|R\|_{\infty}<\infty, \  \beta:=\|\nabla \Ric_V^{\sharp}+\vd^*R+R(\nabla V)\|_{\infty}<\infty$.
Let $\vd \nu=h\, \vd \mu$ with $h\in C_0^{\infty}(M)$.
\begin{enumerate}[\rm(i)] 
  \item If $\Ric_V\geq K$, then
\begin{align*}
H(\nu\,|\,\mu)\leq \frac{n\l(1+\eps\r)S^2(\nu\,|\, \mu)}{2\eps}\l[ c_0 +\Theta\l(\frac{\eps I(\nu\,|\, \mu)}{n\l(1+\eps\r)KS^2(\nu\,|\, \mu)}-c_0\r)\r],
\end{align*}
for any $\eps>0$,  where $$c_0=\frac{\eps (\alpha_1 \sqrt{K}+\beta)^2}{K^3}-1.$$

Moreover, if $\alpha_1=0$ and $\beta=0$, then
\begin{align*}
H(\nu\,|\, \mu)\leq  \frac{n}{2} S^2(\nu\,|\,\mu)\ln\l(1+\frac{I }{nKS^2(\nu\,|\,\mu)}\r).
\end{align*}
 \item If $\Ric_V= K$, then
\begin{align*}
H(\nu\,|\,\mu)
\leq   \frac{\l(1+\eps\r)S^2(\nu\,|\, \mu)}{2\eps}\l[ \tilde{c}_0 +\Theta\l(\frac{\eps I(\nu\,|\, \mu)}{\l(1+\eps\r)KS^2(\nu\,|\, \mu)}- \tilde{c}_0\r)\r],
\end{align*}
for any $\eps>0$,  where $$ \tilde{c}_0=\frac{\eps (\alpha_2 \sqrt{K}+n\beta)^2}{K^3}-1.$$
 Moreover, if $\alpha_2=\beta=0$, then 
\begin{align*}
H(\nu\,|\, \mu)\leq  \frac{1}{2} S^2(\nu\,|\,\mu)\ln\l(1+\frac{I }{KS^2(\nu\,|\,\mu)}\r).
\end{align*}
\end{enumerate}
\end{theorem}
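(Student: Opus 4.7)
The plan is to combine the general HSI bound of Theorem \ref{Gen-HSI-them} with the $\Psi$-estimate of Theorem \ref{esti-entropy}(i), taking $\Psi = \Psi_1$. Starting from
\begin{align*}
2H(\nu\,|\,\mu) \leq \inf_{u>0}\bigg\{\frac{I(\nu\,|\,\mu)(1-\e^{-Ku})}{K} + S^2(\nu\,|\,\mu)\int_u^\infty \Psi_1(t)\,\d t\bigg\},
\end{align*}
I would apply the Young-type inequality $(1+x)^2 \leq (1+\epsilon^{-1}) + (1+\epsilon)x^2$ (for arbitrary $\epsilon > 0$) with $x = K^{-1}(\alpha_1\sqrt{K}+\beta)\sqrt{(\e^{Kt}-1)/K}$ to decouple the square in $\Psi_1$, obtaining
\begin{align*}
\Psi_1(t) \leq \frac{nK(1+\epsilon^{-1})}{\e^{Kt}(\e^{Kt}-1)} + \frac{n(1+\epsilon)(\alpha_1\sqrt{K}+\beta)^2}{K^2\,\e^{Kt}}.
\end{align*}

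Next I would compute the two elementary integrals
\begin{align*}
\int_u^\infty \frac{K\,\d t}{\e^{Kt}(\e^{Kt}-1)} = -\log(1-\e^{-Ku}) - \e^{-Ku}, \qquad \int_u^\infty K\e^{-Kt}\,\d t = \e^{-Ku},
\end{align*}
the first by recognizing the integrand as $\d[\log(1-\e^{-Kt})+\e^{-Kt}]/\d t$. After the substitution $s = 1-\e^{-Ku}$, the HSI bound collapses to
\begin{align*}
2H(\nu\,|\,\mu) \leq \inf_{s\in(0,1)}\{As - B\log s + (C-B)(1-s)\},
\end{align*}
with $A = I(\nu\,|\,\mu)/K$, $B = n(1+\epsilon)S^2(\nu\,|\,\mu)/\epsilon$, and $C = n(1+\epsilon)\gamma\, S^2(\nu\,|\,\mu)$ where $\gamma := (\alpha_1\sqrt{K}+\beta)^2/K^3$. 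A direct check gives $C/B = \epsilon\gamma = c_0 + 1$, so with $\rho := A/B$ one has $A > C \iff \rho - c_0 > 1$.

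Minimizing $f(s) := As - B\log s + (C-B)(1-s)$ is then routine: the critical point $s^* = B/(A+B-C)$ lies in $(0,1)$ precisely when $A > C$, giving $f(s^*) = C + B\log((A+B-C)/B) = B[c_0 + 1 + \log(\rho - c_0)]$; otherwise $f$ is monotone decreasing on $(0,1)$ with infimum $f(1) = A = B\rho$. Since $\Theta(r) = 1+\log r$ for $r \geq 1$ and $\Theta(r) = r$ for $r < 1$, the two cases fuse into $f_{\min} = B[c_0 + \Theta(\rho - c_0)]$, which is the announced bound. Part (ii) is proved by the same argument using $\tilde\Psi_1$ from Theorem \ref{esti-entropy}(ii) in place of $\Psi_1$, which amounts to replacing $\alpha_1$ by $\alpha_2$, $\beta$ by $n\beta$, and dropping the overall factor of $n$.

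The clean inequalities in the case $\alpha_1=\beta=0$ (resp.\ $\alpha_2=\beta=0$) are recovered by letting $\epsilon \to \infty$: then $c_0 \to -1$, $B \to nS^2(\nu\,|\,\mu)$ (resp.\ $S^2(\nu\,|\,\mu)$), $\rho \to I(\nu\,|\,\mu)/(nKS^2(\nu\,|\,\mu))$, and the bracket $-1 + \Theta(\rho + 1)$ telescopes to $\log(1+\rho)$ because $\rho + 1 \geq 1$ always. The main technical obstacle I anticipate is the exact evaluation of $\int_u^\infty K\,\d t/[\e^{Kt}(\e^{Kt}-1)]$: a naive substitution $v = \e^{Kt}$ easily drops the $-\e^{-Ku}$ term, and yet it is precisely this contribution that shifts the case-threshold from $\rho = c_0 + 2$ down to $\rho = c_0 + 1$, enabling the compact $\Theta$-form of the optimum to match the right-hand side of the claim.
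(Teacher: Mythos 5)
Your proof is correct and follows the same route as the paper: apply Theorem \ref{Gen-HSI-them} with $\Psi=\Psi_1$ from Theorem \ref{esti-entropy}, decouple the square by a Young inequality with parameter $\eps$, and optimize over $u$. The paper's proof writes $\Psi_1(t)=n\e^{-Kt}\bigl(\frac{1}{\sqrt{\int_0^t\e^{Kr}\,\d r}}+\frac{\alpha_1}{\sqrt K}+\frac{\beta}{K}\bigr)^2$ and applies $(a+b)^2\le(1+\eps^{-1})a^2+(1+\eps)b^2$, whereas you factor out $\frac{nK}{\e^{Kt}(\e^{Kt}-1)}$ and apply $(1+x)^2\le(1+\eps^{-1})+(1+\eps)x^2$; these are identical splits. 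Likewise the paper sets $A=I$, $C=n(1+\eps)\bigl(\frac{\alpha_1}{\sqrt K}+\frac\beta K\bigr)^2 S^2$ and optimizes in $\e^{-Ku}$, while you use $A/K$, $C/K$ and optimize in $s=1-\e^{-Ku}$ after evaluating the two integrals explicitly (the paper keeps $\int_0^{\e^{-Ku}}\frac{r}{1-r}\,\d r$ implicit); the critical point, case split at $A>C$, and the resulting expression $B\bigl[c_0+\Theta(\rho-c_0)\bigr]$ all coincide. Your recovery of the clean $\alpha_1=\beta=0$ bound by letting $\eps\to\infty$ is a valid and tidy way to get what the paper simply states without detail. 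In short: same approach, with slightly more explicit bookkeeping on your end.
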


\begin{proof}
We only need to prove the first estimate. Denote again $I=I(\nu\,|\,\mu)$ and $S=S(\nu\,|\,\mu)$ for simplicity.
By  Theorem \ref{esti-entropy}, we have 
\begin{align*}
I_{\mu}(P_th)&\leq n\l(\frac{1}{\sqrt{\int_0^t\e^{Kr}\, \vd r}}+\frac{\alpha_1}{\sqrt{K}}+\frac{\beta}{K}\r)^2\e^{-Kt} S^2(\nu\,|\, \mu)\\
&\leq n\l(1+\frac{1}{\eps}\r)S^2(\nu\,|\, \mu)\frac{\e^{-Kt}}{\int_0^t \e^{Kr}\,\vd r}+n(1+\eps)\l(\frac{\alpha_1}{\sqrt{K}}+\frac{\beta}{K}\r)^2\e^{-Kt}S^2(\nu\,|\, \mu)
\end{align*}
for any $\eps>0$. 
Using this inequality, we need to estimate
\begin{align*}
H(\nu\,|\, \mu)&\leq \frac12 \inf_{u>0}\l\{A\int_0^u\e^{-Kt}\,\vd t+B\int_u^{\infty}\frac{K}{\e^{Kt}(\e^{K t}-1)}\,\vd t+C\int_u^{\infty}\e^{-Kt}\, \vd t\r\}\\
&=\frac12 \inf_{u>0}\l\{\frac{A(1-\e^{-Ku})+C\e^{-Ku}}{K}+B\int_0^{\e^{-K u}}\frac{r}{1-r}\,\vd r\r\},
\end{align*}
where 
\begin{align*}
&A=I(\nu\,|\, \mu);\ \  B=n\l(1+\frac{1}{\eps}\r)S^2(\nu\,|\, \mu);\\
&C=n(1+\eps)\l(\frac{\alpha_1}{\sqrt{K}}+\frac{\beta}{K}\r)^2S^2(\nu\,|\, \mu).
\end{align*}
It is easy to see that if $A\leq C$, then  $\inf$ is reached when $u$ tends to $\infty$; if 
$A>C$ however, then  $\inf$ is reached  for $\e^{K u}=\frac{A-C+BK}{A-C}$ so that
\begin{align*}
H(\nu\,|\,\mu)\leq \frac{C}{2K}+\frac{B}{2}\ln \l(1+\frac{A-C}{BK}\r).
\end{align*}
We then conclude that 
\begin{align}\label{HS-beta-0}
H(\nu\,|\,\mu)\leq \frac{B}{2}\l[ c_0 +\Phi\l(\frac{A}{BK}-c_0\r)\r],
\end{align}
where $$c_0=\frac{C-BK}{BK}=\frac{\eps (\alpha_1 \sqrt{K}+\beta)^2}{K^3}-1.$$

The proof of (ii) is the same by taking $B$ as
\begin{align*}
 &\l(1+\frac{1}{\eps}\r)S^2(\nu\,|\,\mu),
 \end{align*}
and  $C$ as
$$(1+\eps)\l(\frac{\alpha_2}{\sqrt{K}}+\frac{n\beta}{K}\r)^2S^2(\nu\,|\, \mu).$$
The details are omitted there.
\end{proof}

\subsection{Case III: $|\nabla \Ric_V^{\sharp}+\vd^*R+R(\nabla V)|$ is not bounded}
For the case that $|\nabla \Ric_V^{\sharp}+\vd^*R+R(\nabla V)|$ is not bounded on the whole space $M$, we get the following result from Theorem \ref{esti-S2}.
\begin{theorem}\label{K-R1-s3}
  Assume that there exists $K>0$, $p>1$ and $\delta>0$ such that
  $$K_V(x)-\frac{2(p-1)}{p}\big(\delta \beta(x)\big)^{\frac{p}{p-1}}-K\geq 0$$ for all $x\in M$.  Let $\alpha_1:=\|R\|_{\infty}<\infty$. Then for $f\in C_b^2(M)$,
   \begin{align*}
H(\nu\,|\,\mu)\leq \frac{n^2\l(1+\eps\r)S^2(\nu\,|\, \mu)}{2\eps}\l[ \tilde{c}_0 +\Theta\l(\frac{\eps I(\nu\,|\, \mu)}{n^2\l(1+\eps\r)KS^2(\nu\,|\, \mu)}-\tilde{c}_0\r)\r],
\end{align*}
for any $\eps>0$,  where
$$\tilde{c}_0=\frac{\eps}{K}\l(\frac{\alpha_1}{\sqrt{K}}+\frac{1}{\delta2^{(p-1)/p}(pK)^{1/p}}\r)^2-1.$$

\end{theorem}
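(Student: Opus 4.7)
The approach closely parallels the proof of Theorem \ref{K-R1-s2}(i), with Theorem \ref{esti-S2} replacing Theorem \ref{esti-entropy} as the key bound on $I_\mu(P_th)$. The starting point is the integrated de Bruijn formula from Proposition \ref{auxi-prop}(i), namely $H(\nu\,|\,\mu)=\tfrac12\int_0^\infty I_\mu(P_th)\,\vd t$, split at a free parameter $u>0$. On $[0,u]$ I would use the exponential decay $I_\mu(P_th)\leq \e^{-Kt}I(\nu\,|\,\mu)$ of Proposition \ref{auxi-prop}(ii); this is legitimate because the hypothesis $K_V(x)-\tfrac{2(p-1)}{p}(\delta\beta(x))^{p/(p-1)}\geq K$ forces the uniform lower bound $\Ric_V\geq K>0$. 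On $[u,\infty)$ I would invoke the bound of Theorem \ref{esti-S2},
\[
I_\mu(P_th)\leq n\left(1+\left(\frac{\alpha_1}{\sqrt{K}}+\frac{1}{\delta 2^{(p-1)/p}(pK)^{1/p}}\right)\sqrt{\int_0^t \e^{Kr}\,\vd r}\right)^{\!2}\frac{\e^{-Kt}}{\int_0^t \e^{Kr}\,\vd r}\,S^2(\nu\,|\,\mu).
\]

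The next step is to decouple the squared bracket using the elementary inequality $(1+b)^2\leq (1+1/\eps)+(1+\eps)b^2$ with $\eps>0$ free, which separates the integrand into a piece proportional to $\e^{-Kt}/\!\int_0^t\e^{Kr}\,\vd r = K/(\e^{Kt}(\e^{Kt}-1))$ with coefficient $B:=n(1+1/\eps)S^2$ and a piece proportional to $\e^{-Kt}$ with coefficient $C:=n(1+\eps)(\alpha_1/\sqrt{K}+1/(\delta 2^{(p-1)/p}(pK)^{1/p}))^2 S^2$. Writing $A:=I(\nu\,|\,\mu)$ and carrying out the two elementary integrations, the second one handled by the substitution $r=\e^{-Kt}$ so that $\int_u^\infty K/(\e^{Kt}(\e^{Kt}-1))\,\vd t=\int_0^{\e^{-Ku}}r/(1-r)\,\vd r$, gives
\[
H(\nu\,|\,\mu)\leq \frac12\inf_{u>0}\left\{\frac{A(1-\e^{-Ku})+C\e^{-Ku}}{K}+B\int_0^{\e^{-Ku}}\frac{r}{1-r}\,\vd r\right\}.
\]

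This minimisation is exactly the one already solved inside the proof of Theorem \ref{K-R1-s2}: if $A\leq C$ the infimum is reached as $u\to\infty$ with value $C/(2K)$, yielding the linear branch of $\Theta$; if $A>C$, differentiation gives $\e^{Ku}=(A-C+BK)/(A-C)$ and leads to $C/(2K)+(B/2)\log(1+(A-C)/(BK))$, the logarithmic branch. Packaging both cases through the function $\Theta$ of Theorem \ref{case-2} and setting $\tilde c_0=(C-BK)/(BK)=\tfrac{\eps}{K}(\alpha_1/\sqrt{K}+1/(\delta 2^{(p-1)/p}(pK)^{1/p}))^2-1$ recovers the stated inequality. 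The main obstacle is not the optimisation, which is mechanical, but the upstream Hessian estimate of Theorem \ref{general-hessian-theorem3}, where Young's inequality is used to absorb the \emph{space-dependent} tensor $\beta(x)$ into the pointwise Bakry--Émery curvature $K_V(x)$; once Theorem \ref{esti-S2} is in hand, the derivation here is formally identical to Case II and only the constants require careful bookkeeping.
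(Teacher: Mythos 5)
Your proposal is correct and follows essentially the same route as the paper: the paper's own proof is just the one-line remark that plugging $A=I(\nu\,|\,\mu)$, $B=n(1+1/\eps)S^2(\nu\,|\,\mu)$ and $C=n(1+\eps)\big(\alpha_1/\sqrt{K}+1/(\delta 2^{(p-1)/p}(pK)^{1/p})\big)^2 S^2(\nu\,|\,\mu)$ into the abstract inequality \eqref{HS-beta-0} derived inside the proof of Theorem \ref{K-R1-s2} completes the argument, and what you have written is precisely that derivation unpacked — de Bruijn's formula, the split at $u$, the Young decoupling $(1+b)^2\le(1+1/\eps)+(1+\eps)b^2$, and the subsequent minimisation. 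One small point worth flagging: both your computation and the paper's proof produce a prefactor $B/2=\tfrac{n(1+\eps)}{2\eps}S^2(\nu\,|\,\mu)$, not $\tfrac{n^2(1+\eps)}{2\eps}S^2(\nu\,|\,\mu)$ as in the displayed statement of Theorem \ref{K-R1-s3}; the factor $n^2$ there appears to be a typographical slip for $n$, consistent with Theorem \ref{esti-S2}, and your proof gets the correct constant.
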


\begin{proof}
By Theorem \ref{esti-S2}, taking 
\begin{align*}
&A=I(\nu\,|\, \mu);\ \  B=n\l(1+\frac{1}{\eps}\r)S^2(\nu\,|\, \mu);\\
&C=n(1+\eps)\Big(\frac{\alpha_1}{\sqrt{K}}+\frac{1}{\delta2^{(p-1)/p}(pK)^{1/p}}\Big)^2S^2(\nu\,|\,\mu)
\end{align*}
in inequality \eqref{HS-beta-0} completes the proof.
\end{proof}

\subsection{Examples}\label{sub-section-exmaples}
To elucidate the conditions in Theorem \ref{case-0} and Theorem
\ref{case-2} we consider some examples. For simplicity, we restrict ourselves
to the case $\beta=0$.
For the case $\beta>0$, one may work out specific examples by using
Theorem \ref{case-0} directly.

\begin{example}\label{ep1}
  Let $M=\R^n$. Consider the operator $L=\Delta-x\cdot \nabla$.  We
  have $\Ric_V=1$, $R=0$ and $\nabla V=x$. Then
  $\mu(\vd x)=(2\pi)^{-n/2}\e^{-|x|^2/2}\, \vd x$, and by
  Theorem \ref{case-0} (ii), we have
  \begin{align*}
    H(\nu\,|\,\mu)\leq \frac12 S^2(\nu\,|\,\mu)\log \l(1+\frac{I(\nu\,|\,\mu)}{S^2(\nu\,|\,\mu)}\r),
  \end{align*}
  which covers the result in \cite{LNP15}.
\end{example}

\begin{example}
Let $M=\R$.  We consider a family of diffusion operator on the line of the type
\begin{align*}
Lf=f''-u'f'
\end{align*}
associated to the symmetric invariant probability measure $\vd \mu=\e^{-u}\,\vd x$
where $u$ is a smooth potential on $\R$. 
We have $\Ric=0$ and $R=0$. Thus
$$\Ric_V=u'',\quad \nabla \Ric_V^{\sharp}+\vd^*R+R(\nabla V)=u'''$$
Hence, if there exists $K>0$, $p>1$ and  $\delta>0$  such that $u''-\frac{2(p-1)}{p}|\delta u'''|^{\frac{p}{p-1}}\geq K>0$, 
then, for any $\eps>0$,
 \begin{align*}
H(\nu\,|\,\mu)
   &\leq \frac{\l(1+\eps\r)S^2(\nu\,|\, \mu)}{2\eps}\\
   &\quad\times\l[\frac{\eps  }{\delta^2 2^{2(p-1)/p}(pK)^{2/p}K}-1 +\Theta\l(\frac{\eps I(\nu\,|\, \mu)}{\l(1+\eps\r)KS^2(\nu\,|\, \mu)}-\frac{\eps  }{\delta^2 2^{2(p-1)/p}(pK)^{2/p}K}+1 \r)\r].
\end{align*}

In particular, if $\eps=\delta^2 2^{2(p-1)/p}(pK)^{2/p}K$, then
\begin{align*}
H(\nu\,|\,\mu)\leq \frac{\l(1+\delta^2 2^{2(p-1)/p}(pK)^{2/p}K\r)S^2(\nu\,|\, \mu)}{\delta^2 2^{1+2(p-1)/p}(pK)^{2/p}K}\,\Theta\l(\frac{\delta^2 2^{2(p-1)/p}(pK)^{2/p} I(\nu\,|\, \mu)}{\l(1+\delta^2 2^{2(p-1)/p}(pK)^{2/p}K\r) S^2(\nu\,|\, \mu)}\r).
\end{align*}
For instance, let $u=\frac{1}{2}(x^2+ax^4)$ with $a>0$. Then
$u''=1+6a x^2$ and $u'''=12ax$. Note that $|u'''|$ is unbounded on $\R$.
Let $p=2$ and $\delta^2=\frac{1}{24a}$.  Then
\begin{align*}
u''-(\delta u''')^2\geq 1
\end{align*}  
and 
\begin{align*}
H(\nu\,|\,\mu)\leq \frac{1}{2}(1+6a)\,S^2(\nu\,|\, \mu) \,\Theta\l(\frac{I(\nu\,|\, \mu)}{\l(6a+1\r) S^2(\nu\,|\, \mu)}\r).
\end{align*}
Note that \cite[Proposition 4.5]{LNP15} requires the following conditions to be satisfied:
there exists a constant $c>0$ such that
\begin{align*}
&u''\geq c,\\
& u^{(4)}-u'u'''+2(u'')^2-6cu''\geq 0,\\
& 3(u''')^2\leq 2(u''-c)\l(u^{(4)}-u'u'''+2(u'')^2-6cu''\r).
\end{align*} 
Then it holds 
 \begin{align*}
 H(\nu\,|\,\mu)\leq \frac{1}{2}S^2(\nu\,|\, \mu)\,\Theta\l (\frac{I(\nu\,|\, \mu) }{c S^2(\nu\,|\, \mu)}\r).
 \end{align*}
 Obviously this result depends on properly choosing the constant $c$ and
 requires some computation compared to our conditions.

\end{example}


\begin{example}\label{ep3}
  Let $M=\mathbb{S}^n$.  Consider the operator $L=\Delta$ with
  $V\equiv 0$ and let $\mu(\vd x)= \vol(\vd x)/ \vol(M)$. Then
  $R_{ijk\ell}=(\delta_{ik}\delta_{j\ell}-\delta_{i\ell}\delta_{jk})$,
  $\Ric=n-1$, $\|\tilde{R}\|_{\infty}=\sqrt{2n(n-1)}$ and
  $$\alpha=K-2\|\tilde{R}\|_{\infty}=(n-1)-2\sqrt{2n(n-1)}<0.$$ By Theorem \ref{case-2}, we have
    \begin{align*}
      H(\nu\,|\,\mu)\leq \frac{\l(2\sqrt{2n(n-1)}-(n-1)\r)}{2(n-1)}\,S^2(\nu\,|\,\mu)\,\Theta\l(\frac{I(\nu\,|\,\mu)}{\big(2\sqrt{2n(n-1)}-(n-1)\big)S^2(\nu\,|\,\mu)}\r).
    \end{align*}
    On the other hand, to put these results in perspective with the
    method of \cite{LNP15}, let us first recall the necessary notions:
    \begin{align*}
      &\Gamma_1(f,\, g):=\langle \nabla f,\, \nabla g\rangle,\\
      &\Gamma_2(f,\, g):=\Ric_V(\nabla f,\, \nabla g)+\langle \Hess_f, \, \Hess_g \rangle_{\HS},\\
      &\Gamma_3(f,\, g):=\frac{1}{2}\Big(L\Gamma_2(f,\, g)-\Gamma_2(Lf,\, g)-\Gamma_2(f,\, Lg)\Big).
    \end{align*}
    Adopting the approach of \cite[Theorem 4.1]{LNP15} we have the
    following result.
    \begin{theorem}\label{Gamma-theorem}
      If there exist positive constants $\kappa, \rho$ and $\sigma$
      such that
      \begin{align*}
        &\Gamma_2(f)\geq \rho \Gamma_1(f),\quad \Gamma_3(f)\geq \kappa \Gamma_2(f),\quad \Gamma_2(f)\geq \sigma |\Hess_f|^2_{\HS}, 
      \end{align*}
      then
      \begin{align*}
        H(\nu\,|\,\mu)\leq \frac{1}{2\sigma }S^2(\nu\,|\,\mu)\,\Theta
        \l (\frac{\sigma\max\{\rho,\kappa\}I(\nu\,|\,\mu)}{\rho \kappa S^2(\nu\,|\,\mu)}\r).
      \end{align*}
    \end{theorem}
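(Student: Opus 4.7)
The plan is to follow the structure of the proof of Theorem~\ref{case-2}, but to derive the Hessian estimate for $P_tf$ purely from the $\Gamma$-calculus hypotheses rather than from explicit curvature-tensor bounds. First I would establish the contraction
\[
|\Hess_{P_tf}|_{\HS}^2 \leq \Psi(t)\,P_t|\nabla f|^2, \qquad \Psi(t) := \frac{\kappa}{\sigma(\e^{\kappa t}-1)}.
\]
To this end, set $\varphi(s):=P_s\Gamma_2(P_{t-s}f)$ on $[0,t]$. Using $\partial_s P_{t-s}f = -\tfrac12 LP_{t-s}f$ together with the identity $\Gamma_3(f)=\tfrac12 L\Gamma_2(f)-\Gamma_2(f,Lf)$, a direct computation gives $\varphi'(s)=P_s\Gamma_3(P_{t-s}f)\geq\kappa\,\varphi(s)$, hence $\varphi(s)\geq\e^{\kappa s}\Gamma_2(P_tf)$. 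The analogous identity $\partial_s P_s\Gamma_1(P_{t-s}f)=\varphi(s)$ combined with integration over $[0,t]$ yields
\[
P_t|\nabla f|^2 - |\nabla P_tf|^2 = \int_0^t\varphi(s)\,\vd s \geq \frac{\e^{\kappa t}-1}{\kappa}\,\Gamma_2(P_tf),
\]
and the third hypothesis $\sigma|\Hess_{P_tf}|_{\HS}^2\leq \Gamma_2(P_tf)$ closes the first step.

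Inserting this Hessian contraction into the Stein-kernel / Cauchy--Schwarz computation used in the proof of Theorem~\ref{esti-entropy} with $g_t=\log P_th$ produces $I_\mu(P_th)\leq\Psi(t)\,S^2(\nu\,|\,\mu)$. Simultaneously, the condition $\Gamma_2\geq\rho\Gamma_1$ is just $\Ric_V\geq\rho$, so Proposition~\ref{auxi-prop}(ii) yields the complementary exponential decay $I_\mu(P_th)\leq\e^{-\rho t}\,I(\nu\,|\,\mu)$. I would then plug the pointwise minimum of these two bounds into the de Bruijn formula of Proposition~\ref{auxi-prop}(i), split the resulting integral at an arbitrary $u>0$, and optimise over $u$.

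The main obstacle is to recast the ensuing minimisation in the clean $\Theta$-form with the combined factor $\max\{\rho,\kappa\}/(\rho\kappa)$, since the two exponential rates $\rho$ and $\kappa$ are a priori distinct. The key observation is that $x\mapsto x/(\e^{xt}-1)$ is decreasing on $(0,\infty)$ for every $t>0$, so with $\lambda:=\min\{\rho,\kappa\}$ one has $\Psi(t)\leq \lambda/(\sigma(\e^{\lambda t}-1))$ while trivially $\e^{-\rho t}\leq\e^{-\lambda t}$. Replacing both rates by $\lambda$ and writing $I=I(\nu\,|\,\mu)$, $S=S(\nu\,|\,\mu)$, the split de Bruijn integral becomes
\[
H(\nu\,|\,\mu) \leq \frac{I(1-\e^{-\lambda u})}{2\lambda} - \frac{S^2}{2\sigma}\ln(1-\e^{-\lambda u}),
\]
and the substitution $v=1-\e^{-\lambda u}\in(0,1)$ converts the infimum over $u$ into the elementary minimisation of $v\mapsto Iv-(\lambda S^2/\sigma)\ln v$, handled exactly as in the proof of Theorem~\ref{case-2}; its value is $(\lambda S^2/\sigma)\,\Theta\bigl(\sigma I/(\lambda S^2)\bigr)$. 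The identity $1/\lambda=\max\{\rho,\kappa\}/(\rho\kappa)$ then rewrites the argument of $\Theta$ in the advertised form, completing the proof.
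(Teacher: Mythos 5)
Your argument is correct and supplies the proof that the paper omits; the paper merely says ``Adopting the approach of [Theorem~4.1, LNP15]'' and cites the reference without reproducing the derivation, so there is no internal proof to compare against. Your chain of reasoning is the expected $\Gamma$-calculus one: the interpolation $\varphi(s)=P_s\Gamma_2(P_{t-s}f)$ with $\varphi'(s)=P_s\Gamma_3(P_{t-s}f)\geq\kappa\varphi(s)$ and then $\partial_s P_s\Gamma_1(P_{t-s}f)=\varphi(s)$ does deliver $P_t|\nabla f|^2 \geq \frac{\e^{\kappa t}-1}{\kappa}\Gamma_2(P_tf)\geq\frac{\sigma(\e^{\kappa t}-1)}{\kappa}|\Hess_{P_tf}|_{\HS}^2$, the Stein-kernel/Cauchy--Schwarz step from Theorem~\ref{esti-entropy} and Proposition~\ref{auxi-prop}(ii) give the two competing bounds on $I_\mu(P_th)$, the monotonicity of $x\mapsto x/(\e^{xt}-1)$ justifies the downgrade of both rates to $\lambda=\min\{\rho,\kappa\}$, and the elementary minimisation of $v\mapsto Iv-(\lambda S^2/\sigma)\ln v$ over $(0,1)$ together with $1/\lambda=\max\{\rho,\kappa\}/(\rho\kappa)$ produces exactly the stated $\Theta$-form.
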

    For the general Riemmanian case, a crucial difficulty in applying Theorem \ref{Gamma-theorem} is to check the existence of a constant
    $\kappa>0$ such that $\Gamma_3(f)\geq\kappa\Gamma_2(f)$. In the
    special case $\mathbb{S}^n$, we have
    \begin{align*}
      \Gamma_2(f)&=(n-1)|\nabla f|^2+|\Hess_f|_{\HS}^2 \geq (n-1)|\nabla f|^2,\\
      \Gamma_3(f)&= (n-1)\l((n-1)|\nabla f|^2+|\Hess_f|_{\HS}^2\r)+\frac{1}{2}|\nabla \Hess_f|^2+2(n-1) |\Hess_f|_{\HS}^2
                   -2\langle \Hess_f (R^{\sharp, \sharp}), \Hess_f \rangle \\
                 &\geq \min\l\{(3(n-1)-2\|\tilde{R}\|_{\infty}), (n-1)\r\}\Gamma_2(f) \geq \l(3(n-1)-2\|\tilde{R}\|_{\infty}\r)\Gamma_2(f), \\
      \Gamma_2(f)&\geq |\Hess_f|_{\HS}^2.
    \end{align*}
    Thus $\rho=(n-1), \sigma=1, $ and
    $\kappa= \min\l\{(3(n-1)-2\|\tilde{R}\|_{\infty}), (n-1)\r\}$. If
    $\kappa=3(n-1)-2\sqrt{2n(n-1)}>0$, i.e. $n\geq 9$, by Theorem
    \ref{Gamma-theorem}, we have
    \begin{align*}
      H(\nu\,|\,\mu)\leq \frac{1}{2}S^2(\nu\,|\, \mu)\,\Theta\l (\frac{I(\nu\,|\,\mu)}{\l(3(n-1)-2\sqrt{2n(n-1)}\r)S^2(\nu\,|\, \mu)}\r).
    \end{align*}
    We first observe that this inequality holds for all $n\geq 0$ and when
$$I(\nu\,|\,\mu)\leq \l(3(n-1)-2\sqrt{2n(n-1)}\r)S^2(\nu\,|\,\mu);$$ 
the inequality can not become the classical log-Sobolev inequality.
In any case, our HSI inequality improves the classical log-Sobolev
inequality. In particular, for general Riemannian case, if $|R|$ is
small such that $K-2\|R\|_{\infty}>0$, the HSI inequality improves the
classical HI inequality no matter whether $S^2(\nu\,|\,\mu)$ is
small or not.

\end{example}


\begin{example}
  Let $G$ be a $n$-dimensional Lie group with a bi-invariant metric
  $g$, and let $\mathfrak{g}$ denote its Lie algebra.
  Consider $L=\Delta-\nabla V$
  for $V\in C^2(M)$ such that $\mu(\vd x)=\e^{-V(x)}\vd x$. Then for
  $X,Y,Z\in \mathfrak{g}$,
  \begin{align*}
    \nabla _X Y=\frac12 [X,Y] \quad \mbox{and}\ \quad R(X,Y)Z=\frac{1}{4}[Z,[X,Y]].
  \end{align*}
  By the Jacobi identity, we have
  \begin{align*}
    &\big(\nabla \Hess_V+R(\nabla V)\big)(X,Y)\\
    &\quad=\nabla_X(\nabla_Y\nabla V)-\nabla_{\nabla_X^Y}\nabla V+R(\nabla V,X)Y\\
    &\quad=\frac{1}{4}[X,[Y,\nabla V]]+\frac{1}{4}[\nabla V,[X,Y]]+\frac{1}{4}[Y,[\nabla V,X]]=0.
  \end{align*}
  We conclude that if $G$ is a Ricci parallel Lie group with
  $\Ric_V\geq K>0$ and $\|R\|_{\infty}<\infty$, then the inequalities
  in Theorem \ref{case-0}\,(i) and Theorem \ref{case-2}\,(i) hold. When the
  condition $\Ric_V=K>0$ is satisfied, both of the inequalities in Theorems
  \ref{case-0} and \ref{case-2} (i) and (ii) hold true.

\end{example}

\section{The WS inequality and HWSI inequality}\label{Section-WS-WSH}
Denote by $\sP(M)$ the set of probability measures on $M$.  For
$\mu_1,\mu_2\in\sP(M)$ the $L^2$-Wasserstein distance is given by
$$\W_2(\mu_1,\mu_2):=\inf_{\pi\in\sC(\mu_1,\mu_2)}
\left(\iint_{M\times M}\rho(x,y)^2\, \vd\pi( x, y)\right)^{1/2}$$
where $\rho$ denotes the Riemannian distance on $M$ and
$\sC(\mu_1,\mu_2)$ consists of all couplings of $\mu_1$ and~$\mu_2$.
The Wasserstein distance has various characterizations and plays an
important role in the study of SDEs, partial differential equations,
optimal transportation problems, etc.  For more background, one may
consult \cite{Talagrand, vonRS05,Wbook14} and the references therein.
The following Theorem describes the relationship between Wasserstein distance
and Stein discrepancy.

\begin{theorem}[WS inequality]\label{WS-ineq-RM}
   Assume that $\Ric_V\geq K>0$, $\alpha_1:=\|R\|_{\infty}<\infty$\,( or $\alpha_2:=\|\tilde{R}\|_{\infty}<\infty$) and
$$\beta:=\|\nabla \Ric_V^{\sharp}+\vd^*R+R(\nabla V)\|_{\infty}<\infty.$$
Then  for $\nu\in \mathscr{P}(M)$ satisfying
${\vd \nu}/{\vd \mu}\in C_b^2(M)$, we have
$$
\W_2(\nu, \mu)\leq \l(\int_0^{\infty}\sqrt{\Psi(t)}\, \vd t\r)
S(\nu\,|\,\mu),
$$
 where  $\Psi$ is defined by the term in \eqref{IS-1} (and also as in \eqref{Psi-2} when $\Ric=K>0$).

\end{theorem}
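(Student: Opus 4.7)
The plan is to interpolate between $\nu$ and $\mu$ along the heat semigroup $(P_t)_{t\geq 0}$, control the Wasserstein length of the resulting curve by the Fisher information, and then insert the Fisher information estimate from Theorem~\ref{esti-entropy}.

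First I would set $\nu^t:=(P_th)\mu$, so that $\nu^0=\nu$. Under the assumption $\Ric_V\geq K>0$, Talagrand's inequality combined with the exponential decay of entropy gives $\W_2(\nu^t,\mu)^2\leq (2/K)H(\nu^t\,|\,\mu)\leq (2/K)\e^{-Kt}H(\nu\,|\,\mu)$, so $\nu^t\to\mu$ in $\W_2$ as $t\to\infty$. The triangle inequality applied to a fine partition of $[0,T]$, followed by the limit $T\to\infty$, then yields
\begin{equation*}
\W_2(\nu,\mu)\leq \int_0^{\infty}|\dot\nu^t|_{\W_2}\,\vd t,
\end{equation*}
where $|\dot\nu^t|_{\W_2}$ denotes the metric derivative in $(\sP(M),\W_2)$.

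Next I would invoke the Benamou-Brenier formula (equivalently, Otto's Riemannian structure on $\sP(M)$). Since $(P_th)_{t\geq 0}$ satisfies $\partial_t P_th=\tfrac12 L(P_th)$, the curve $\nu^t$ solves the continuity equation with velocity $v_t=-\tfrac12\nabla\log P_th$, and consequently
\begin{equation*}
|\dot\nu^t|_{\W_2}^2\leq \int_M |v_t|^2\,\vd\nu^t=\tfrac14\,I_\mu(P_th).
\end{equation*}
Substituting the bound $I_\mu(P_th)\leq \Psi(t)S^2(\nu\,|\,\mu)$ from Theorem~\ref{esti-entropy} then produces
\begin{equation*}
\W_2(\nu,\mu)\leq \tfrac12\,S(\nu\,|\,\mu)\int_0^{\infty}\sqrt{\Psi(t)}\,\vd t,
\end{equation*}
which matches the stated inequality up to the precise normalization of $\Psi$. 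The integral is finite: near $t=0$ we have $\Psi(t)=O(1/t)$, so $\sqrt{\Psi(t)}=O(1/\sqrt{t})$ is integrable, and as $t\to\infty$ the factor $\e^{-Kt}$ embedded in $\Psi$ forces exponential decay.

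The main obstacle, aside from routine constant-chasing, will be rigorously justifying the Benamou-Brenier/Otto length identity on the Riemannian manifold $M$ and confirming $\nu^t\to\mu$ in $\W_2$. Both are standard (cf.\ Villani or von Renesse-Sturm) but require checking that $h\in C_b^2(M)$ together with the hypothesis $\Ric_V\geq K>0$ suffices for the interchange between the metric derivative of $t\mapsto\nu^t$ and $\tfrac12\sqrt{I_\mu(P_th)}$.
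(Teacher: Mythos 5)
Your proposal is correct and follows essentially the same route as the paper's proof: both control the Wasserstein metric speed of $t\mapsto\nu^t$ by $\sqrt{I_\mu(P_th)}$ (the paper cites Otto--Villani \cite{OV00} Lemma~2 / Villani Theorem~24.2(iv); you re-derive it via Benamou--Brenier) and then integrate the Fisher-information bound $I_\mu(P_th)\leq\Psi(t)S^2(\nu\,|\,\mu)$ from Theorem~\ref{esti-entropy}. Note that your extra factor $\tfrac12$ in $|\dot\nu^t|_{\W_2}\leq\tfrac12\sqrt{I_\mu(P_th)}$ is actually the correct normalization for the semigroup $P_t=\e^{\frac12 tL}$ (Otto--Villani's statement is for $\e^{tL}$), so your version of the constant is the sharper one and is in fact the one needed to recover the Gaussian case of Corollary~\ref{cor-flat-case}; the paper's display \eqref{W-S} appears to drop this $\tfrac12$.
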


\begin{proof}
  Recall that $h={\vd\nu}/{\vd\mu}\in C_b^2(M)$ and let
  $\vd \nu^t=P_th\,\vd \mu$. 
  By the
  formula in \cite[Lemma 2]{OV00} or \cite[Theorem 24.2(iv)]
  {Villani}, we obtain
  \begin{align}\label{W-S} {\frac{\vd^+}{\vd t}}\W_2(\nu,\nu^t)\leq
    \l(\int_M\frac{|\nabla P_th|^2}{P_th}\, \vd\mu\r)^{1/2}=
    I_{\mu}(P_th)^{1/2},
  \end{align}
  where ${\frac{\vd^+}{\vd t}}$ stands for the upper right derivative.
  On the other hand, by Theorem \ref{esti-entropy},
  \begin{align*}
    I_{\mu} (P_th)\leq \Psi(t) S(\nu\,|\,\mu)^2.
  \end{align*}
  Combining this with \eqref{W-S}, we obtain
  \begin{equation*}
    \W_2(\nu, \mu)\leq \int_0^{\infty} (I_{\mu}(P_th))^{1/2}\, \vd t\leq S(\nu\,|\,\mu)\int_0^{\infty}\sqrt{\Psi(t)}\, \vd t.\qedhere
  \end{equation*}
\end{proof}

\begin{corollary}\label{Wasserstein-Stein}
 Assume that $\beta=0$ and $\|R\|_{\infty}<\infty$.  Let
  $\nu\in \mathscr{P}(M)$ satisfying
  ${\vd \nu}/{\vd \mu}\in C_b^2(M)$.
  \begin{enumerate}[\rm(i)]
  \item If\/ $\Ric_V\geq K>0,$ then
    \begin{align*}
      \W_2(\nu, \mu)\leq \l(\int_0^{\infty}\sqrt{\frac{n(2\|R\|_{\infty}-K)}{\e^{Kt}-\e^{2(K-\|R\|_{\infty})t}}}\, \vd t\r)\, S(\nu\,|\,\mu);
    \end{align*}
  \item if\/ $\Ric_V=K>0,$ then
    \begin{align*}
      \W_2(\nu, \mu)\leq \l(\int_0^{\infty}\sqrt{\frac{2\|\tilde{R}\|_{\infty}-K}{\e^{Kt}-\e^{2(K-\|\tilde{R}\|_{\infty})t}}}\, \vd t\r)\, S(\nu\,|\,\mu).
    \end{align*}
  \end{enumerate}
\end{corollary}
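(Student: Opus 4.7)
The plan is to simply specialize Theorem \ref{WS-ineq-RM} to the situation $\beta = 0$. In that theorem we already have the master inequality
\begin{equation*}
\W_2(\nu,\mu) \leq \bigg(\int_0^\infty \sqrt{\Psi(t)}\,\vd t\bigg)\, S(\nu\,|\,\mu),
\end{equation*}
where $\Psi$ is any function satisfying $I_\mu(P_t h) \leq \Psi(t)\, S(\nu\,|\,\mu)^2$. Hence the corollary is obtained by substituting the sharper $\Psi(t)$ provided by Corollary \ref{esti-S} in the present case $\beta = 0$: for part (i), take $\Psi(t) = n(2\|R\|_\infty - K)/(\e^{Kt} - \e^{2(K-\|R\|_\infty)t})$; for part (ii), where additionally $\Ric_V = K$, take $\Psi(t) = (2\|\tilde R\|_\infty - K)/(\e^{Kt} - \e^{2(K-\|\tilde R\|_\infty)t})$.

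The only real work is to confirm that the hypotheses of Theorem \ref{WS-ineq-RM} are met and that the resulting $\sqrt{\Psi(t)}$ is integrable on $(0,\infty)$, so that the stated quantitative conclusion is meaningful. Both $\Psi$ functions arise from Corollary \ref{esti-S}, whose hypotheses ($\Ric_V \geq K$ or $\Ric_V = K$, $\|R\|_\infty < \infty$, $\beta = 0$) match ours exactly, so no further derivative formula is needed. The decay of $\sqrt{\Psi(t)}$ as $t \to \infty$ is genuinely exponential: writing $a = K$ and $b = 2(K - \|R\|_\infty)$, the denominator $\e^{at} - \e^{bt}$ grows like $\e^{\max(a,b)t}$, so $\sqrt{\Psi(t)}$ decays like $\e^{-\max(a,b)t/2}$, which is integrable near infinity for any constellation of $\|R\|_\infty$ and $K$.

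The only delicate point is the behavior at $t = 0$. Expanding $\e^{at} - \e^{bt} = (a-b)t + O(t^2)$, we find $\Psi(t) \sim n/t$ (respectively $\sim 1/t$ in case (ii)) as $t \downarrow 0$, so $\sqrt{\Psi(t)} \sim \sqrt{n/t}$ is integrable near $0$. In the borderline case $2\|R\|_\infty = K$ (resp.\ $2\|\tilde R\|_\infty = K$), where both numerator and denominator vanish, an l'Hôpital computation with $b \to a$ gives the limit $\Psi(t) = n/(t\e^{Kt})$ (resp.\ $1/(t\e^{Kt})$); the same integrability analysis applies. With these two checks the integrals in (i) and (ii) are finite, and the proof is complete by direct substitution into Theorem \ref{WS-ineq-RM}.

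I expect no genuine obstacle beyond the bookkeeping just described; the statement is really a transcription of Theorem \ref{WS-ineq-RM} once Corollary \ref{esti-S} has been established, which is presumably why the authors present it as a corollary rather than a theorem.
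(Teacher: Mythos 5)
Your proposal is correct and is precisely the route the paper takes: Corollary \ref{Wasserstein-Stein} is stated without its own proof and is meant as a direct specialization of Theorem \ref{WS-ineq-RM} in which $\Psi(t)$ is taken from Corollary \ref{esti-S} (the $\beta=0$ instance of Theorem \ref{esti-entropy}). Your integrability checks at $t\to 0$ and $t\to\infty$, including the borderline case $2\|R\|_\infty=K$, are sound and in fact more careful than the paper, which leaves that verification implicit.
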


One may compare this inequality with the classical
Talagrand-type transportation cost inequality
\begin{align}\label{W-H}
  \W_2(\nu, \mu)^2\leq \frac{1}{2K}H(\nu\,|\,\mu).
\end{align}
We can go
further and improve this inequality to the following HWSI
inequality by assuming $\beta=0$.

\begin{theorem}[HWSI inequality]\label{WSH-ineq}
  Assume that 
$\|R\|_{\infty}<\infty$ and $\beta=0$. If\/ $\Ric_V\geq K>0$ and $\alpha:=K-2\|R\|_{\infty}>0$. Let
  $\vd\nu=h\,\vd \mu$. Then
  \begin{align*}
    \W_2(\nu , \,\mu)\leq \frac{S(\nu\,|\, \mu)}{2K}\int_0^{L^{-1}\l(\frac{2K H(\nu\,|\, \mu)}{S^2(\nu\,|\, \mu)}\r)} \frac{1}{\sqrt{y}}\l(1-\l(\frac{y}{y+\alpha n}\r)^{K/\alpha}\r)\, \vd y
  \end{align*}
  where
  \begin{align*}
L(x)=x+Kn \int_0^x\frac{r^{K/\alpha-1}(r-x)}{(r+\alpha n)^{K/\alpha +1}}\, \vd r.
\end{align*}
  
\end{theorem}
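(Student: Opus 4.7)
The idea is to establish the inequality by a comparison-of-functions argument along the semigroup flow $\nu^t = (P_t h)\mu$. Set $S := S(\nu\,|\,\mu)$ (kept fixed) and define
\[
\Phi(t) := \W_2(\nu^t, \mu), \qquad G(t) := \frac{S}{2K}\int_0^{\xi(t)} \frac{1}{\sqrt{y}}\left(1 - \left(\frac{y}{y + \alpha n}\right)^{K/\alpha}\right)\vd y,
\]
where $\xi(t) := L^{-1}\big(2KH(\nu^t\,|\,\mu)/S^2\big)$. The plan is to prove $\Phi(0) \leq G(0)$, which is the claimed inequality, by showing that $G - \Phi$ is non-increasing on $[0, \infty)$ with $G(\infty) - \Phi(\infty) = 0$.

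Two identities underlie the analysis. First, differentiating the explicit representation of $L(x)$ from Theorem \ref{case-0}(i) and using the substitution $r = \alpha n s/(1-s)$ to evaluate $\int_0^x (z/(z+\alpha n))^{K/\alpha}\,\vd z$, one obtains $L'(y) = 1 - (y/(y+\alpha n))^{K/\alpha}$; hence the integrand in $G(t)$ equals $L'(y)/\sqrt{y}$. Second, as in the proof of Theorem \ref{WS-ineq-RM}, an Otto-Villani-type estimate yields $\frac{\vd^+}{\vd t}\W_2(\nu^t, \mu) \geq -\tfrac{1}{2}\sqrt{I_\mu(P_t h)}$ (the factor $1/2$ reflecting the metric speed $\|v_t\|_{L^2(\nu^t)} = \tfrac{1}{2}\sqrt{I_\mu(P_t h)}$ attached to $P_t = \e^{tL/2}$). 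Combining this with the identity $\vd H(\nu^t\,|\,\mu)/\vd t = -I_\mu(P_t h)/2$ from Proposition \ref{auxi-prop}(i) and implicit differentiation of $L(\xi(t)) = 2KH(\nu^t\,|\,\mu)/S^2$, a chain-rule computation gives
\[
G'(t) \;=\; \frac{S}{2K}\cdot\frac{L'(\xi(t))}{\sqrt{\xi(t)}}\cdot \xi'(t) \;=\; -\,\frac{I_\mu(P_t h)}{2\, S\, \sqrt{\xi(t)}},
\]
with the cancellation of $L'(\xi)$ between the two factors being key.

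The desired comparison $G'(t) \leq \Phi'(t)$ then reduces to the pointwise inequality $I_\mu(P_t h) \geq S^2\, \xi(t)$, which is precisely the HSI inequality from Theorem \ref{case-0}(i) applied to $\nu^t$. The main obstacle I anticipate is that HSI applied to $\nu^t$ naturally produces the time-dependent Stein discrepancy $S(\nu^t\,|\,\mu)$ rather than the constant $S = S(\nu\,|\,\mu)$ figuring in $G(t)$: one therefore needs the monotonicity $S(\nu^t\,|\,\mu) \leq S(\nu\,|\,\mu)$ along the flow, which is classical for the Ornstein-Uhlenbeck semigroup (Ledoux-Nourdin-Peccati) but is a plausible yet nontrivial statement in the curved Riemannian setting. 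Granting this, the boundary values $\Phi(\infty) = G(\infty) = 0$ follow from semigroup contractivity under $\Ric_V \geq K > 0$ together with $H(\nu^t\,|\,\mu) \to 0$, and integrating $G'(t) \leq \Phi'(t)$ from $0$ to $\infty$ completes the proof.
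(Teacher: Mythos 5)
Your comparison-of-functions framing and the derivative computations are correct as far as they go: the identity $L'(y)=1-\bigl(y/(y+\alpha n)\bigr)^{K/\alpha}$, the cancellation giving $G'(t)=H'(t)/(S\sqrt{\xi(t)})=-I_\mu(P_th)/(2S\sqrt{\xi(t)})$, and the reduction of $G'(t)\le\Phi'(t)$ to $I_\mu(P_th)\ge S^2\,\xi(t)$ all match the structure of the paper's argument. The boundary value $\Phi(\infty)=G(\infty)=0$ is also fine. However, the step you yourself flag as the ``main obstacle'' is a genuine gap, not a technicality: you need the HSI inequality for $\nu^t$ expressed with the \emph{fixed} Stein discrepancy $S=S(\nu\,|\,\mu)$, and applying Theorem~\ref{case-0}(i) to $\nu^t$ only produces $S(\nu^t\,|\,\mu)$. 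The monotonicity $S(\nu^t\,|\,\mu)\le S(\nu\,|\,\mu)$ is neither proved in the paper nor available in the literature for the general curved setting (and in fact it is not what is used even in the Euclidean Gaussian case of Ledoux--Nourdin--Peccati). Without it, your comparison $G'\le\Phi'$ does not close.

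The paper sidesteps this entirely. Instead of applying HSI to $\nu^t$, it uses the integrated de~Bruijn representation $H(\nu^t\,|\,\mu)=\tfrac12\int_0^\infty I_\mu(P_{s+t}h)\,\vd s$ and then, in the tail of the integral, bounds $I_\mu(P_{s+t}h)\le\Psi(s+t)\,S(\nu\,|\,\mu)^2$ by invoking Theorem~\ref{esti-entropy} for the \emph{original} measure $\nu$ at time $s+t$, not for $\nu^t$. Since $\Psi\ge0$, one has $\int_{u+t}^\infty\Psi\le\int_u^\infty\Psi$, which discards the extra shift $t$ and produces
\[
H(\nu^t\,|\,\mu)\le\tfrac12\inf_{u>0}\Big\{I(\nu^t\,|\,\mu)\int_0^u\e^{-Ks}\,\vd s+S(\nu\,|\,\mu)^2\int_u^\infty\Psi(s)\,\vd s\Big\},
\]
i.e.\ exactly the HSI-type bound $H(\nu^t\,|\,\mu)\le\frac{S(\nu\,|\,\mu)^2}{2K}L\!\bigl(I(\nu^t\,|\,\mu)/S(\nu\,|\,\mu)^2\bigr)$ with the fixed $S(\nu\,|\,\mu)$ already in place. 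Inverting $L$ then gives $I(\nu^t\,|\,\mu)\ge S^2\xi(t)$ with no reference to $S(\nu^t\,|\,\mu)$, and the rest of your calculation goes through. So you should replace the appeal to Theorem~\ref{case-0}(i) for $\nu^t$ (and the unavailable Stein-discrepancy monotonicity) by this mixed-time argument. One small further point: you keep a factor $1/2$ in $\frac{\vd^+}{\vd t}\W_2(\nu^t,\mu)$, which is actually the internally consistent constant for the semigroup $P_t=\e^{tL/2}$ given $\frac{\vd}{\vd t}H(\nu^t\,|\,\mu)=-\tfrac12 I_\mu(P_th)$; the paper's displayed bound $\frac{\vd^+}{\vd t}\W_2\le I_\mu(P_th)^{1/2}$ is off by this factor, but since the paper then rewrites it as $-\frac{\vd}{\vd t}H(\nu^t\,|\,\mu)/\sqrt{I(\nu^t\,|\,\mu)}$ the final integration is the same; this does not affect the logic of the fix above.
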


\begin{remark}\label{rem4}
  Since $L(r)\leq r$ for $r\geq 0$,
  this inequality improves the Talagrand quadratic
  transportation cost inequality \eqref{W-H}.
\end{remark}

\begin{proof}[Proof of Theorem \ref{WSH-ineq}]
Recall that $\vd \nu^t= P_th \,\vd \mu$. Then
\begin{align*}
H(\nu^t\,|\ \mu)=\frac{1}{2}\int_0^{\infty}I_{\mu}(P_{s+t}h)\, \vd s.
\end{align*}
Together with Proposition \ref{auxi-prop} this implies
 \begin{align*}
 H(\nu^t\,|\, \mu) &\leq \frac{1}{2}\inf_{u>0} \l\{ \{ I(\nu^t\,|\,\mu)\int_0^u \e^{-Ks}\, \vd s +  S(\nu\,|\,\mu)^2 \int_{u+t}^{\infty} \Psi(s)\, \vd s \r\} \\
 &\leq \frac{1}{2}\inf_{u>0} \l\{ \{ I(\nu^t\,|\,\mu)\int_0^u \e^{-Ks}\, \vd s +  S(\nu\,|\,\mu)^2 \int_{u}^{\infty} \Psi(s)\, \vd s \r\}
 \end{align*}
If $\beta=0$,  $\alpha=K-2\|R\|_{\infty}\geq 0$ and $\Psi(s)=\frac{\alpha n}{\e^{Ks}(\e^{\alpha s}-1)}$, then 
\begin{align}
      H(\nu^t\,|\, \mu)&\leq  \frac{I(\nu^t\,|\,\mu)}{2K}\l(1-\l(\frac{I(\nu^t\,|\,\mu)}{I(\nu^t\,|\,\mu)+\alpha nS^2(\nu\,|\,\mu)}\r)^{K/\alpha }\r)\notag\\
      &\quad+\frac{nS^2(\nu\,|\,\mu)}{2}\int_0^{\textstyle\frac{I(\nu^t\,|\,\mu)}{I(\nu^t\,|\,\mu)+\alpha nS^2(\nu\,|\,\mu)}}\frac{r^{K/\alpha }}{1-r}\,\vd r \notag \\
      &=\frac{S^2(\nu\,|\, \mu)}{2K} L\l(\frac{I(\nu^t\,|\, \mu)}{S^2(\nu\,|\, \mu)}\r),\label{eq-H1}
    \end{align}
where
\begin{align*}
L(x)=x+Kn \int_0^x\frac{r^{K/\alpha-1}(r-x)}{(r+\alpha n)^{K/\alpha +1}}\, \vd r.
\end{align*}
It is easy to see that 
$$L'(x)=1-\l(\frac{x}{x+\alpha n}\r)^{K/\alpha}>0$$ 
for $x>0$. Thus $L^{-1}$ exists and   
\begin{align*}
I(\nu^t\,|\, \mu)\geq  S^2(\nu\,|\, \mu) L^{-1}\l(\frac{2KH(\nu^t\,|\, \mu)}{S^2(\nu\,|\, \mu)}\r).
\end{align*}
Dividing $\W_2(\mu, \nu^t)$ by $t$ and using the above estimate, we have
\begin{align*}
\frac{\vd^+}{\vd t} \W_2(\mu, \nu^t)&\leq I_{\mu}(P_th)^{1/2}=\frac{\frac{\vd }{\vd t} H(\nu^t\,|\, \mu)}{ \sqrt{I(\nu^t\,|\, \mu)}} \\
& \leq  \frac{-\frac{\vd }{\vd t} H(\nu^t\,|\, \mu)}{S(\nu\,|\, \mu)\sqrt{ L^{-1}\l(\frac{2KH(\nu^t\,|\, \mu)}{S^2(\nu\,|\, \mu)}\r)}}.
\end{align*}
Therefore, integrating both sides from $0$ to $\infty$ yields
\begin{align*}
\W_2(\nu,\mu)&\leq \int_0^{\infty}\frac{-\frac{\vd }{\vd t} H(\nu^t\,|\, \mu)}{S(\nu\,|\, \mu)\sqrt{ L^{-1}\l(\frac{2K H(\nu^t\,|\, \mu)}{S^2(\nu\,|\, \mu)}\r)}}\\
&=S(\nu\,|\, \mu)\int_0^{\frac{H(\nu\,|\, \mu)}{S^2(\nu\,|\, \mu)}}\frac{\vd x}{\sqrt{L^{-1}(2Kx)}}\\
&=\frac{S(\nu\,|\, \mu)}{2K}\int_0^{L^{-1}\l(\frac{2K H(\nu\,|\, \mu)}{S^2(\nu\,|\, \mu)}\r)} \frac{1}{\sqrt{y}}\l(1-(\frac{y}{y+\alpha n})^{K/\alpha}\r)\, \vd y.
\end{align*}

\end{proof}
In particular, if $\Hess_V=K$  for some positive constant $K$,
then by Obata's Rigidity Theorem (see \cite[Theorems 3.4 and 6.3]{WY2014}), $M$ is
isometric to $\mathbb{R}^n$, and we have
\begin{corollary}\label{cor-flat-case}
Assume that  $\Hess_V=K>0$. Let $\vd \nu=h \vd \mu$. Then
 \begin{equation*}
    \W_2(\nu,\,\mu)\leq \frac{S(\nu\,|\,\mu)}{K^{1/2}}\arccos\l(\exp\l(-\frac{H(\nu\,|\,\mu)}{S^2(\nu\,|\,\mu)}\r)\r).
    \qedhere
  \end{equation*}
\end{corollary}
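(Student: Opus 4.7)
The plan is to mirror the proof of Theorem \ref{WSH-ineq}, but with the sharp HSI inequality from the last part of Theorem \ref{case-0}(ii) in place of the general one, so that the factor $n$ in the auxiliary $L$-function is replaced by $1$ and $\alpha$ by $K$.

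First I invoke Obata's rigidity theorem (cited just above the corollary): the assumption $\Hess_V=K>0$ with $K$ constant forces $M$ to be isometric to $\mathbb{R}^n$. Hence $R\equiv 0$, so $\|R\|_\infty=\|\tilde R\|_\infty=0$, $\beta=0$, and $\Ric_V=K$, placing us in the regime of the Gaussian-type HSI bound in Theorem \ref{case-0}(ii). Applying it to $\nu^t$ via the same optimization used there yields
$$H(\nu^t\,|\,\mu)\leq \tfrac12 S^2(\nu\,|\,\mu)\log\!\bigl(1+I(\nu^t\,|\,\mu)/(KS^2(\nu\,|\,\mu))\bigr),$$
equivalently $I(\nu^t\,|\,\mu)\geq KS^2(\nu\,|\,\mu)\bigl(e^{2H(\nu^t|\mu)/S^2(\nu|\mu)}-1\bigr)$. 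In the language of the proof of Theorem \ref{WSH-ineq}, this corresponds to taking $L(x)=K\log(1+x/K)$, with $L^{-1}(z)=K(e^{z/K}-1)$ and $L'(x)=K/(x+K)$.

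Next I run the Otto--Villani/de Bruijn chain from the proof of Theorem \ref{WSH-ineq}: couple $\tfrac{d^+}{dt}\W_2(\mu,\nu^t)\leq\sqrt{I(\nu^t|\mu)}$ with the identity $\sqrt{I(\nu^t|\mu)}=-\tfrac{d}{dt}H(\nu^t|\mu)/\sqrt{I(\nu^t|\mu)}$, bound the denominator from below by the rearranged HSI above, and integrate in $t$ from $0$ to $\infty$. After the change of variable $y=L^{-1}\bigl(2KH(\nu^t|\mu)/S^2(\nu|\mu)\bigr)$ this reduces, exactly as in Theorem \ref{WSH-ineq}, to the elementary bound
$$\W_2(\nu,\mu)\leq \frac{S(\nu\,|\,\mu)}{2}\int_0^{K(e^{2H/S^2}-1)}\frac{dy}{\sqrt y\,(y+K)}.$$

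Finally the substitution $w=\sqrt y$ converts the integral into $S(\nu\,|\,\mu)\int_0^{\sqrt{K(e^{2H/S^2}-1)}}dw/(w^2+K)=(S(\nu\,|\,\mu)/\sqrt K)\arctan\!\bigl(\sqrt{e^{2H/S^2}-1}\bigr)$, and the trigonometric identity $\arctan(\sqrt{e^{2a}-1})=\arccos(e^{-a})$ delivers the stated inequality. There is no single hard step in this plan; the only real care required is bookkeeping: recognising that upgrading from Theorem \ref{case-0}(i) to the Gaussian-type bound in (ii) amounts to the substitution $n\to 1$ in the $L$-function of Theorem \ref{WSH-ineq}, and then matching the constants through the two successive substitutions to unmask $\arccos(e^{-H/S^2})$ with the correct coefficient $1/\sqrt K$. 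The $K=1$ case then recovers the Gaussian statement of \cite{LNP15} as a consistency check.
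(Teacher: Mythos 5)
Your proof is correct and follows essentially the same route as the paper: invoke Obata's rigidity to reduce to $M\cong\R^n$ (so $R\equiv0$, $\beta=0$, $\Ric_V=K$), apply the Gaussian-form HSI bound to $\nu^t$, and integrate the Otto--Villani/de Bruijn estimate for $\tfrac{\vd^+}{\vd t}\W_2(\nu,\nu^t)$. The only difference is cosmetic: where the paper directly recognises $-\tfrac{\vd}{\vd t}\bigl\{\tfrac{S}{\sqrt K}\arccos(\e^{-H(\nu^t|\mu)/S^2})\bigr\}$ as a closed-form antiderivative, you route through the $L$-function of Theorem \ref{WSH-ineq} (with $\alpha=K$, $n\mapsto1$, giving $L(x)=K\log(1+x/K)$) followed by the substitution $w=\sqrt y$ and the identity $\arctan\sqrt{\e^{2a}-1}=\arccos(\e^{-a})$ — a slightly more explicit unwinding of the same computation.
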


\begin{proof}
As $\Hess_V=K$, we know that $M$ is isometric to $\R^n$. 
  First, we repeat the steps of the proof of Theorem \ref{WSH-ineq} letting
  $\Psi(t)= \frac{K}{\e^{Kt}(\e^{Kt}-1)}$. 
  By this and \eqref{W-S}, we obtain
  \begin{align*}
    \frac{ \vd}{ \vd t}\W_2(\nu,\nu^t)
    &\leq \sqrt{I(\nu^t\,|\,\mu)}\leq
      -\frac{\frac{ \vd}{ \vd t}H(\nu^t\,|\, \mu)}{\sqrt{K}S(\nu\,|\,\mu)\sqrt{\exp\l(\frac{2H(\nu^t\,|\,\mu)}{S^2(\nu\,|\,\mu)}\r)-1}}\\
    &=-\frac{\vd}{ \vd t}\l\{\frac{S(\nu\,|\,\mu)}{K^{1/2}}
      \arccos\l(\exp\l(-\frac{H(\nu^t\,|\,\mu)}{S^2(\nu\,|\,\mu)}\r)\r)\r\}.
  \end{align*}
  Consequently,
  \begin{equation*}
    \W_2(\nu,\,\mu)=\int_0^{\infty}\frac{\vd }{\vd t} \W_2(\mu,\nu^t)\, \vd t\leq \frac{S(\nu\,|\,\mu)}{K^{1/2}}\arccos\l(\exp\l(-\frac{H(\nu\,|\,\mu)}{S^2(\nu\,|\,\mu)}\r)\r).
    \qedhere
  \end{equation*}
\end{proof}

\section{Moment bounds and Stein discrepancy}\label{Section-concentration}
In \cite{LNP15}, the authors investigate another feature of Stein's
discrepancy applied to concentration inequalities on $\R^d$. It is
well known that the classical log-Sobolev inequalities on the
manifolds is a powerful tool towards the invariant measure. In this
section, we continue to relate the Stein discrepancy to the concentration
inequality on a Riemannian manifold.  Let
\begin{align*}
  S_p(\nu\,|\,\mu)=\inf \l(\int |\tau_{\nu}-\id|_\HS^p\,\vd \nu\r)^{1/p}.
\end{align*}
As explained in \cite{LNP15}, the growth of the Stein
discrepancy $S_p(\nu\,|\,\mu)$ in $p$ entails concentration properties
of the measure $\nu$ in terms of the growth of its moments. The
following result shows how to directly transfer information on the
Stein kernel to concentration properties on the manifold.
\begin{theorem}[Moment bounds]
  Assume that $\Ric_V\geq K>0$, and
$$|\Hess_{P_tf}|_{\HS}^2\leq \Psi(t)P_t|\nabla f|^2$$
where $\Psi$ satisfies
\begin{align*}
  \int_0^{\infty}\Psi^{1/2}(r)\, \vd r<\infty.
\end{align*}
There exists a numerical constant $C>0$ such that for every
$1$-Lipshitz function $f\colon M\rightarrow \R$ with $\int f \, \vd \nu=0$,
and every $p\geq 2$,
\begin{align*}
  \l(\int |f|^p\, \vd \nu\r)^{1/p}\leq C\l(S_p(\nu\,|\,\mu)+\sqrt{p}\Big(\int |\tau_{\nu}|_{\rm op}^{p/2}\vd \nu\Big)^{1/p}\r),
\end{align*}
where the constant $C$ depends on the constants $K$, $p$ and
$\int_0^{\infty}\Psi^{1/2}(r)\,\vd r.$
\end{theorem}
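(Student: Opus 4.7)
The plan is to adapt the Euclidean strategy of Ledoux--Nourdin--Peccati, using the Hessian bound $|\Hess_{P_tf}|_\HS^2\leq\Psi(t)P_t|\nabla f|^2$ in place of the $\Gamma_3$-computations used there. Since $\Ric_V\geq K>0$, the semigroup is exponentially ergodic, so $P_tf\to\mu(f)$ as $t\to\infty$, and I have the representation
\begin{equation*}
  f - \mu(f) = -\tfrac{1}{2}\int_0^\infty LP_tf\,\vd t.
\end{equation*}
Combining this with $\|f\|_{L^p(\nu)}\leq \|f-\mu(f)\|_{L^p(\nu)} + |\mu(f)|$ reduces the problem to treating a ``mean'' piece and a ``fluctuation'' piece separately.

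To bound $|\mu(f)|$, I integrate the representation against $\nu$; since $\nu(f)=0$,
\begin{equation*}
  \mu(f) = \tfrac{1}{2}\int_0^\infty\!\!\int LP_tf\,\vd\nu\,\vd t = \tfrac{1}{2}\int_0^\infty\!\!\int\langle\id-\tau_\nu,\,\Hess_{P_tf}\rangle_\HS\,\vd\nu\,\vd t,
\end{equation*}
where I used the pointwise identity $\Delta g=\langle\id,\Hess_g\rangle_\HS$ and the Stein-kernel condition $\int\langle\nabla V,\nabla g\rangle\,\vd\nu = \int\langle\tau_\nu,\Hess_g\rangle_\HS\,\vd\nu$. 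Cauchy--Schwarz in the Hilbert--Schmidt inner product, followed by H\"older's inequality pairing $L^p$ with $L^{p/(p-1)}$ and the Hessian estimate with $|\nabla f|\leq 1$ (so that $P_t|\nabla f|^2\leq 1$), yields
\begin{equation*}
  |\mu(f)|\leq \tfrac{1}{2}\l(\int_0^\infty\sqrt{\Psi(t)}\,\vd t\r)\,S_p(\nu\,|\,\mu),
\end{equation*}
accounting for the $S_p(\nu\,|\,\mu)$-term on the right-hand side of the theorem.

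For the fluctuation $\|f-\mu(f)\|_{L^p(\nu)}$ I would pass to the probabilistic side. Let $X_t$ denote the $L/2$-diffusion started from a point with law $\nu$. By It\^o's formula, $P_{T-t}f(X_t)$ is a local martingale with stochastic differential $\langle\nabla P_{T-t}f(X_t),\ptr_t\,\vd B_t\rangle$, and letting $T\to\infty$ gives
\begin{equation*}
  f(X_0)-\mu(f) = -\int_0^\infty\langle\nabla P_tf(X_t),\,\ptr_t\,\vd B_t\rangle.
\end{equation*}
Burkholder--Davis--Gundy reduces the $L^p(\nu)$-norm of the left-hand side to a $\sqrt{p}$-multiple of the $L^p(\nu)$-norm of $\bigl(\int_0^\infty|\nabla P_tf|^2(X_t)\,\vd t\bigr)^{1/2}$. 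The Bismut formula together with $\Ric_V\geq K$ gives $|\nabla P_tf|\leq e^{-Kt/2}P_t|\nabla f|$, making the time integral converge uniformly in $\nabla f$. To extract $|\tau_\nu|_{\rm op}$ rather than a crude $\|\nabla f\|_\infty$-type bound, I invoke the Stein identity again, applying it to auxiliary test functions of the form $|f-\mu(f)|^{p-2}(f-\mu(f))$ and thereby converting $\langle\nabla V,\nabla\varphi\rangle$-integrals into $\langle\tau_\nu,\Hess_\varphi\rangle_\HS$-integrals, which are then dominated by $|\tau_\nu|_{\rm op}\,|\Hess_\varphi|_{\HS}$; this is what produces $\bigl(\int|\tau_\nu|_{\rm op}^{p/2}\,\vd\nu\bigr)^{1/p}$.

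The main obstacle is precisely this last exchange: pulling the Stein kernel, in operator norm, out of the quadratic variation in a way compatible with the $L^p$-BDG constant. A naive bound would lose the $\tau_\nu$-dependence and leave only a trivial $\sqrt{p}\cdot 1$ factor. Matching the correct power of $p$ with the $L^{p/2}(\nu)$-norm of $|\tau_\nu|_{\rm op}$ requires an iterative moment estimate, controlling growth constants in $p$ while keeping the $t$-integral convergent via the exponential contraction from $\Ric_V\geq K$. The final constant $C$ absorbs $K$, universal BDG constants, and the finite quantity $\int_0^\infty\sqrt{\Psi(r)}\,\vd r$.
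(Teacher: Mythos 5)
There is a genuine gap in the fluctuation part of your argument, and it is exactly at the point you flag yourself. Your treatment of the mean piece $|\mu(f)|$ is fine and closely parallels the paper's reduction (the paper simply replaces the hypothesis $\nu(f)=0$ by $\mu(f)=0$ via the triangle inequality, which amounts to the same thing). But your route to the fluctuation bound via the martingale representation $f(X_0)-\mu(f)=-\int_0^\infty\langle\nabla P_tf(X_t),\ptr_t\,\vd B_t\rangle$ and BDG does not close: after BDG you are left with $\bigl(\int_0^\infty|\nabla P_tf|^2(X_t)\,\vd t\bigr)^{1/2}$ under the law $\nu$ for $X_0$, whose time marginals are $\nu P_t$, and there is no mechanism in this expression that produces $|\tau_\nu|_{\mathrm{op}}$. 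You acknowledge that a naive bound only gives $\sqrt{p}\cdot 1$, and the ``iterative moment estimate, controlling growth constants in $p$'' that you appeal to is not supplied; in particular, the suggestion to apply the Stein identity to $|f-\mu(f)|^{p-2}(f-\mu(f))$ is not compatible with BDG, since the quadratic variation is an integral in $t$ over $|\nabla P_tf|^2(X_t)$, not an expression to which the Stein kernel can be applied directly.

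The paper's actual mechanism, which is entirely deterministic, is worth contrasting. It works with $\phi(t)=\int_M(P_tf)^{2q}\,\vd\nu$ for $p=2q$, differentiates in $t$, and applies the Stein kernel identity to the test function $(P_tf)^{2q}$. The crucial point is the product rule
\begin{equation*}
  \Hess\bigl((P_tf)^{2q}\bigr) = 2q\,(P_tf)^{2q-1}\Hess(P_tf) + 2q(2q-1)(P_tf)^{2q-2}\,\nabla P_tf\otimes\nabla P_tf,
\end{equation*}
which upon pairing with $\tau_\nu$ splits $-\phi'(t)$ into a term involving $\langle\tau_\nu-\id,\Hess(P_tf)\rangle_\HS$ (controlled via the Hessian bound $\Psi(t)$ and yielding the $S_p$-contribution) and a term involving $\langle\tau_\nu,\nabla P_tf\otimes\nabla P_tf\rangle$, bounded by $|\tau_\nu|_{\mathrm{op}}|\nabla P_tf|^2\leq \e^{-Kt}|\tau_\nu|_{\mathrm{op}}$. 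This is precisely what produces $\int|\tau_\nu|_{\mathrm{op}}^q\,\vd\nu$ together with a controlled $p$-dependence, and Gronwall on $\phi$ then finishes the proof. In short: the $|\tau_\nu|_{\mathrm{op}}$-term comes from the Hessian of the $2q$-th power, not from a quadratic variation, and that is the idea missing from your proposal.
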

\begin{proof}
  We only prove the result for $p$ an even integer, the general case
  follows similarly with some further technicalities. We may also
  replace the assumption $\int_Mf \, \vd \nu=0$ by
  $\int_Mf \, \vd \mu=0$ via a simple use of the triangle inequality.
  Let $f\colon M\rightarrow \R$ be 1-Lipshitz, and assume $f$ to be smooth and
  bounded. Let $q\geq 1$ be an integer and set
  \begin{align*}
    \phi(t)=\int_M (P_tf)^{2q}\, \vd\nu,\quad t\geq 0.
  \end{align*}
  Since $\mu(f)=0$, it follows that $\phi(\infty)=0$.  Now using the
  calculation with respect to the semigroup $P_t$, we have
  \begin{align}\label{esti-phi}
    \phi'(t)&=2q \int_M (P_tf)^{2q-1}LP_tf\, \vd\nu\notag\\
            &=2q\int(P_tf)^{2q-1}\Delta P_tf\, \vd \nu-\int\langle \tau_{\nu},\Hess((P_tf)^{2q}) \rangle_\HS\, \vd\nu\notag\\
            &=2q\int (P_tf)^{2q-1}\langle {\id}-\tau_{\nu},\Hess(P_tf) \rangle_\HS\, \vd\nu\notag\\
            &\quad -2q(2q-1)\int_M(P_tf)^{2q-2}\langle \tau_{\nu}, \nabla P_tf\otimes \nabla P_tf \rangle\, \vd\nu.
  \end{align}
  Next, Theorem \ref{general-hessian-theorem} implies
  \begin{align*}
    \langle \tau_{\nu}-{\id}, \Hess(P_tf) \rangle_\HS
    &\leq |\tau_{\nu}-{\id}|_\HS\,|\Hess(P_tf)\|_\HS\\
    & \leq |\tau_{\nu}-{\id}|_\HS\, \l(\Psi(t) P_t|\nabla f|^2\r)^{1/2}\\
    & \leq |\tau_{\nu}-{\id}|_\HS \,\Psi^{1/2}(t).
  \end{align*}
  Combining these inequalities with \eqref{esti-phi} and observing that
  \begin{align*}
    |\nabla P_tf|\leq \e^{-K/2}P_t|\nabla f|\leq \e^{-K/2},
  \end{align*}
  we arrive at
  \begin{align*}
    -\phi'(t)&\leq \Psi^{1/2}(t)\int 2q|P_tf|^{2q-1}|\tau_{\nu}-{\id}|_\HS\, \vd\nu\\
             &\quad+\e^{-Kt}\int 2q(2q-1)(P_tf)^{2q-2}|\tau_{\nu}|_{\rm op}\,\vd\nu.
  \end{align*}
  Therefore, from the Young-H\"{o}lder inequality, we obtain
  \begin{align*}
    -\phi'(t)\leq C(t)\phi(t)+D(t),
  \end{align*}
  where
  $$D(t)=\Psi^{1/2}(t)\int|\tau_{\nu}-{\id}|_\HS^{2q}\,
  \vd\nu+\e^{-Kt}\int\big((2q-1)|\tau_{\nu}|_{\rm op}\big)^q \vd\nu$$ and
$$C(t)=\Psi^{1/2}(t)\,(2q)^{2q/(2q-1)}+\e^{-Kt}(2q)^{2q/(2q-2)}.$$
Thus we get
\begin{align*}
  \phi(t)\leq \int_t^{\infty}\exp\left(\int_t^sC(r)\,dr\right)D(s)\, \vd s,
\end{align*}
and it follows that
\begin{align*}
  \phi(0)
  &\leq \frac{1}{(2q)^{2q/(2q-1)}}\exp{\l((2q)^{2q/(2q-1)}\int_0^{\infty}\Psi^{1/2}(s)\, \vd s\r)}\int|\tau_{\nu}-{\id}|_\HS^{2q}\, \vd\nu\\
  &\quad+\frac{\e^{(2q)^{2q/(2q-2)}/K}}{(2q)^{2q/(2q-2)}}\int\l((2q-1)|\tau_{\nu}|_{\rm op}\r)^q\, \vd\nu.
\end{align*}
Therefore, there exists a constant $C>0$ such that
\begin{equation*}
  \int_M|f|^{2q}\, \vd\nu
  \leq C\l(\int|\tau_{\nu}-{\id}|_\HS^{2q}\, \vd\nu+\int\big(2q|\tau_{\nu}|_{\rm op}\big)^q\, \vd\nu\r).\qedhere
\end{equation*}
\end{proof}

\begin{remark}\label{rem2}
  We see that when $\Hess_V=K$, by Obata's Rigidity Theorem
  (see \cite[Theorem 3.4]{WY2014}), $M$ is
  isometric to $\mathbb{R}^n$, which implies $\Ric_V=K$, $\alpha_n=K$, $\|R\|_\infty=0$,
  and then the constant $C$ is independent of the dimension
  $n$. In the general case however, as $\Psi$ depends on the
  dimension, the constant $C$ will not be dimension-free.

  When $p=2$, we observe that
  $|\tau_{\nu}|_{\rm op}\leq 1+|\tau_{\nu}-\id|_\HS$ which implies
  that
  \begin{align*} {\rm Var}_{\nu}(f)\leq
    C(1+S(\nu\,|\,\mu)+S^2(\nu\,|\,\mu)).
  \end{align*}
  Thus, the Stein discrepancy $S(\nu\,|\,\mu)$ with respect to the
  invariant measure gives another control of the spectral properties
  for log-concave measures, see \cite{Milman} for the Lipshitz
  characterization of Poincar\'{e} inequalities for measures of this
  type.
\end{remark}

\bibliographystyle{amsplain}%
\bibliography{Stein-kernel}
\end{document}